\newtheorem{thm}{Theorem}[section]
\newtheorem{cor}[thm]{Corollary}
\newtheorem{lem}[thm]{Lemma}
\newtheorem{prop}[thm]{Proposition}
\theoremstyle{definition}
\newtheorem{defin}[thm]{Definition}
\newtheorem{rem}[thm]{Remark}
\newtheorem{exa}[thm]{Example}
\newtheorem*{xrem}{Remark}
\newcommand{\C}{\mathbb{C}}
\newcommand{\biindice}[3]%
{%

\begin{array}[t]{c}
{\displaystyle #1}\\
{\scriptstyle #2}\\
{\scriptstyle #3}
\end{array}

}
\newcommand\ind{{\rm 1\kern-.30em I}}
\newcommand\dis{\displaystyle}
\newcommand\eps{\varepsilon}
\begin{document}

\title[]{Lacunary M\"untz spaces: \\ isomorphisms and Carleson embeddings}

\author[Lo\"ic Gaillard]{Lo\"ic Gaillard}
\address{Laboratoire de Math\'ematiques de Lens (LML), EA 2462, F\'ed\'eration CNRS Nord-Pas-de-Calais FR~2956, Universit\'e d'Artois, rue Jean Souvraz S.P. 18\\ 62307 Lens Cedex, France}
\email{loic.gaillard@univ-artois.fr}

\author[Pascal Lef\`evre]{Pascal Lef\`evre}
\address{Laboratoire de Math\'ematiques de Lens (LML), EA 2462, F\'ed\'eration CNRS Nord-Pas-de-Calais FR~2956, Universit\'e d'Artois, rue Jean Souvraz S.P. 18\\ 62307 Lens Cedex, France}
\email{pascal.lefevre@univ-artois.fr}

\date{}

\begin{abstract}
In this paper we prove that $M_\Lambda^p$ is almost isometric to $\ell^p$ in the canonical way when $\Lambda$ is lacunary with a large ratio. 
 On the other hand, our approach can be used to study also the Carleson measures for M\"untz spaces $M_\Lambda^p$ when $\Lambda$ is lacunary. We give some necessary and some sufficient conditions to ensure that a Carleson embedding is bounded or compact. In the hilbertian case, the membership to Schatten classes is also studied. When $\Lambda$ behaves like a geometric sequence the results are sharp, and we get some characterizations.
\end{abstract}

\subjclass[2010]{30B10, 47B10, 47B38.}

\keywords{M\"untz spaces, Carleson embeddings, lacunary sequences, Schatten classes.}

\maketitle

\section{Introduction}

Let $m$ be the Lebesgue measure on $[0,1]$. For $p\in[1,+\infty)$, $L^p(m)=L^p([0,1],m)$ (sometimes denoted simply $L^p$ when there is no ambiguity) denotes the space of complex-valued measurable functions on $[0,1]$, equipped with the norm $\|f\|_p=(\int_0^1|f(t)|^pdt)^{\frac{1}{p}}.$ In the same way, $\mathcal{C}=C([0,1])$ is the space of continuous functions on $[0,1]$ equipped with the usual sup-norm.
 We shall also consider some positive and finite measures $\mu$ on $[0,1)$ (see the remark at the beginning of section 2), and the associated $L^p(\mu)$ space.
For a sequence $w=(w_n)_n$ of positive weights, we denote $\ell^p(w)$ the Banach space of complex sequences $(b_n)_n$ equipped with the norm $\|b\|_{\ell^p(w)}=(\sum_n|b_n|^pw_n)^{\frac{1}{p}}$ and the vector space $c_{00}$ consisting on complex sequences with a finite number of non-zero terms.
 All along the paper, when $p\in(1,+\infty)$, we denote as usual $p'=\tfrac{p}{p-1}$ its conjugate exponent.

The famous M\"untz theorem (\cite[p.172]{BE},\cite[p.77]{GL}) states that if $\Lambda=(\lambda_n)_{n\in\mathbb N}$ is an increasing sequence of non-negative real numbers, then the linear span of the monomials $t^{\lambda_n}$ is dense in $L^p$ (resp. in $\mathcal{C}$) if and only if $\sum_{n\geq 1}\tfrac{1}{\lambda_n}=+\infty$ (resp. and $\lambda_0=0$). 
We shall assume that the M\"untz condition $\sum_{n\geq 1}\tfrac{1}{\lambda_n}<+\infty$ is fulfilled
and we define the M\"untz space $M_\Lambda^p$ as the closed linear space spanned by the monomials $t^{\lambda_n}$, where $n\in\mathbb{N}.$
We shall moreover assume that $\Lambda$ satisfies the gap condition: $\inf\limits_n \big(\lambda_{n+1}-\lambda_n\big)>0$. Under this later assumption the Clarkson-Erd\"os theorem holds \cite[Th.6.2.3]{GL}: the functions in $M_\Lambda^p$ are the functions $f$ in $L^p$ such that $f(x)=\sum a_nx^{\lambda_n}$ (pointwise on $[0,1)$).
This gives us a class of Banach spaces $M_\Lambda^p\subsetneq L^p$ of analytic functions on $(0,1)$. 

In full generality, the M\"untz spaces are difficult to study, but for some particular sequences $\Lambda$, we can find some interesting properties of the spaces $M_\Lambda^p$. Let us mention that lately these spaces received an increasing attention from the point of view of their geometry and operators: the monograph of Gurariy-Lusky \cite{GL}, and various more or less recent papers (see for instance \cite{AHLM},\cite{AL},\cite{CFT},\cite{LL},\cite{NT}).

We shall focus on two different questions on the M\"untz spaces. The first one is linked to an old result: Gurariy and Macaev proved in \cite{GM} that, in $L^p$, the normalized sequence $((p\lambda_n+1)^{\frac{1}{p}}t^{\lambda_n})_n$ is equivalent to the canonical basis of $\ell^p$ if and only if $\Lambda$ is lacunary (see Th.\ref{thm gurariy} below). More recently, the monograph \cite{GL} introduces the notion of quasi-lacunary sequence (see definition \ref{def lac} below), and states that $M_\Lambda^p$ is still isomorphic to $\ell^p$ when $\Lambda$ is quasi-lacunary. On the other hand, some recent papers discuss about the Carleson measures for the M\"untz spaces. In  \cite{CFT}, the authors introduced the class of sublinear measures on $[0,1)$, and proved that when $\Lambda$ is quasi-lacunary, the sublinear measures are Carleson embeddings for $M_\Lambda^1$.  In \cite{NT}, the authors extended this result to the case $p=2$ but only when the  sequence $\Lambda$ is lacunary.
 
In this paper, we introduce another method to study the lacunary M\"untz spaces: for a weight $w$ and a measure $\mu$ on $[0,1)$, we define  $T_\mu:\ell^p(w)\rightarrow L^p(\mu)$ by $T_\mu(b)=\sum_n b_nt^{\lambda_n}$ for $b=(b_n)\in\ell^p(w)$. The operator $T_\mu$ depends on $w,\mu,p$ and $\Lambda$, and when it is bounded we shall denote by $\|T_\mu\|_p$ its norm. We shall see that an estimation of $\|T_\mu\|_p$ can be used to improve the theorem of Gurariy-Macaev, and to generalize former Carleson embedding results to lacunary M\"untz spaces $M_\Lambda^p$ for any $p\ge1$.

The paper is organized as follows: in part 2, we specify the missing notations and some usefull lemmas. The main result gives an upper bound for the approximation numbers of $T_\mu$ (see Prop.\ref{prop an(T) Dn}). 
In section 3, we focus on the classical case: we fix $w_n=(p\lambda_n+1)^{-1}$ and we define $J_\Lambda:\ell^p(w)\rightarrow M_\Lambda^p$ by $J_\Lambda(b)=\sum_n b_nt^{\lambda_n}$. It is the isomorphism underlying in the theorem of Gurariy-Macaev. For $p>1$, we prove that $J_\Lambda$ is bounded exactly when $\Lambda$ is quasi-lacunary. On the other hand, when $\Lambda$ is lacunary with a large ratio, we also get a sharp bound for $\|J_\Lambda^{-1}\|_p$  (see Th.\ref{thm r epsilon} below). 
Our approach leads to an asymptotically orthogonal version of Gurariy-Macaev theorem exactly for the super-lacunary sequences.
In section 4, we apply the results of section 2 for a positive and finite measure $\mu$ on $[0,1)$ with the weights $w_n=\lambda_n^{-1}$.
To treat the Carleson embedding problem, we shall give an estimation of the approximation numbers of the embedding operator $i_\mu^p:M_\Lambda^p\rightarrow L^p(\mu)$. In section 5, we focus on the compactness of $i_\mu^p$ using the same tools as in section 4. In the case $p=2$, this leads to some control of the Schatten norm of the Carleson embedding and some characterizations when $\Lambda$ behaves like a geometric sequence.\medskip

As usual the notation, $A\lesssim B$ means that there exists a constant $c>0$ such that $A\le cB$. This constant $c$ may depend along the paper on $\Lambda$ (or sometimes only on its ratio of lacunarity), on $p\ldots$. We shall specify this dependence to avoid any ambiguous statement. In the same way, we shall use the notations $A\approx B$ or $A\gtrsim B$.

\section{Preliminary results}

 Before giving preliminary results, let us give a few words of explanation about our choice of measures on $[0,1)$. This comes from the fact that the measures involved (if considered on $[0,1]$) must satisfy $\mu(\{1\})=0$. Indeed, we focus either on the Lebesgue measure $m$ (satisfying of course $m(\{1\})=0$) or on measures such that the Carleson embedding $f\in M_\Lambda^p\mapsto f\in L^p(\mu)$ is (defined and) bounded, so that testing a sequence of monomials $g_n(t)=t^{\lambda_n}$ we must have $$\dis\mu(\{1\})=\lim\|g_n\|^p_{L^p(\mu)}\lesssim\lim\|g_n\|^p_{L^p(m)}=0.$$  Therefore practically, we shall consider in the whole paper measures on $[0,1)$. Moreover, thanks to the result of Clarkson-Erd\"os, the value  at any point of $[0,1)$ of any function of a M\"untz space can be defined without ambiguity.

\medskip

We shall need several notions of growth for increasing sequences.

\begin{defin}\label{def lac}

\begin{itemize}

\item A sequence $u=(u_n)_n$ of  positive numbers is said to be {\em lacunary} if there exists $r>1$ such that $\displaystyle {u_{n+1}}\ge r{u_n}$, for every $n\in\mathbb{N}$. We shall say that such a sequence is $r$-lacunary and that $r$ is a ratio of lacunarity of this sequence.

\item The sequence $u$ is called {\em quasi-lacunary} if there is an extraction $(n_k)_k$ such that $\sup\limits_{k\in\mathbb{N}}(n_{k+1}-n_k)<+\infty,$ and $(u_{n_k})_k$ is lacunary.

\item The sequence $u$ is called {\em quasi-geometric} if there are two constants $r$ and $R$ such that we have $\displaystyle 1<r\leq \frac{u_{n+1}}{u_n}\leq R<+\infty$, for every $n\in\mathbb{N}$. In particular, these sequences are lacunary.

\item The sequence $u$ is called {\em super-lacunary} if $\displaystyle  \frac{u_{n+1}}{u_n}\longrightarrow+\infty$.
\end{itemize}
\end{defin}

\begin{rem} \label{rem qlac fini}
It is proved in \cite[Prop.7.1.3 p.94]{GL} that a sequence is quasi-lacunary if and only if it is a finite union of lacunary sequences.
\end{rem}

The following result is due to Gurariy and Macaev.

\begin{thm}\label{thm gurariy}{\cite[Corollary 9.3.4, p.132]{GL}}

For $p\in[1,+\infty)$, the following are equivalent:
\begin{enumerate}[(i)]
\item The sequence $\Lambda$ is lacunary.
\item The sequence $\Big(\displaystyle\frac{t^{\lambda_n}}{\|t^{\lambda_n}\|_{p}}\Big)$ in $L^p$ is equivalent to the canonical basis of $\ell^p$.
\end{enumerate}
In particular, since $\|t^{\lambda_n}\|_{p}=(p\lambda_n+1)^{-\frac{1}{p}}$, we have for any $b\in c_{00}$
$$\Big\|\sum b_nt^{\lambda_n}\Big\|_{p} \approx \Big(\sum\frac{|b_n|^p}{p\lambda_n+1}\Big)^{\frac{1}{p}}$$ when $\Lambda$ is lacunary, and where the underlying constants depend on $p$ and $\Lambda$ only.
\end{thm}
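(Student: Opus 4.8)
The plan is to prove the two-sided estimate in the ``in particular'' form, i.e. to show that, writing $g_n=(p\lambda_n+1)^{1/p}t^{\lambda_n}$ for the normalized monomials, one has $\|\sum_n b_n g_n\|_p\approx\|(b_n)\|_{\ell^p}$ for all $b\in c_{00}$ if and only if $\Lambda$ is lacunary; the equivalence with (ii) is then a restatement. A convenient device throughout is the substitution $t=e^{-u}$, which turns $\|\sum b_n t^{\lambda_n}\|_p^p$ into $\int_0^\infty|\sum_n b_n e^{-\lambda_n u}|^p e^{-u}\,du$ and makes the relevant scales $u\sim 1/\lambda_n$ explicit: after normalization the mass of $g_n$ sits at $u\approx 1/(p\lambda_n)$, and lacunarity ($\lambda_{n+1}\ge r\lambda_n$) means exactly that these scales are geometrically separated. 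For the upper bounds I would also use at the outset that the monomials are nonnegative, so that $|\sum b_n t^{\lambda_n}|\le\sum|b_n|t^{\lambda_n}$ pointwise and one may assume $b_n\ge 0$.

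For the direction (ii)$\Rightarrow$(i) I would argue by contraposition. If $\Lambda$ is not lacunary then $\liminf_n \lambda_{n+1}/\lambda_n=1$, so along a subsequence $\lambda_{n_k+1}/\lambda_{n_k}\to 1$ (with $\lambda_{n_k}\to\infty$, since the M\"untz condition forces $\lambda_n\to\infty$). I would then show that two normalized monomials with nearly equal exponents become asymptotically parallel, namely $\|g_{n_k}-g_{n_k+1}\|_p\to 0$: setting $t=s^{1/\lambda}$ with $\lambda=\lambda_{n_k}$ and $\mu=\lambda_{n_k+1}$, the integrand converges pointwise to $0$ and is dominated, so dominated convergence applies. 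This contradicts the lower $\ell^p$-estimate applied to the vector $(1,-1)$, which would force $\|g_{n_k}-g_{n_k+1}\|_p\gtrsim 2^{1/p}$.

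The substance is (i)$\Rightarrow$(ii), where I would prove $\|\sum b_n g_n\|_p\approx\|(b_n)\|_{\ell^p}$ by decomposing $[0,\infty)$ into the blocks $I_k=[1/\lambda_{k+1},1/\lambda_k)$ dictated by the lacunary scales. For the upper bound (where $b_n\ge0$ may be assumed) I would estimate $\int_{I_k}(\sum_n b_n e^{-\lambda_n u})^p e^{-u}\,du$ block by block: on $I_k$ the resonant term $b_k$ contributes an amount comparable to $b_k^p\|t^{\lambda_k}\|_p^p\approx b_k^p/\lambda_k$, the higher frequencies $n>k$ are damped double-exponentially ($e^{-\lambda_n u}\le e^{-r^{n-k-1}}$ there), and the lower frequencies $n<k$, though flat on $I_k$, are tied down after normalization by the geometric factor $(\lambda_n/\lambda_k)^{1/p}\le r^{-(k-n)/p}$. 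Summing over $k$ reduces the upper bound to the boundedness on $\ell^p$ of a matrix with geometric off-diagonal decay, settled by a Schur test with constant depending only on $r$ and $p$. For the matching lower bound, which cannot ignore cancellation between the terms, I would argue by duality: build functionals $\phi_k\in L^{p'}$ supported near $t=1$ at the scale of $I_k$ (for instance from differences of indicators of the blocks, chosen to annihilate the low frequencies), show they are approximately biorthogonal to $(g_n)$ with geometrically decaying off-diagonal pairings, and bound $\|\sum c_k\phi_k\|_{p'}\lesssim\|(c_k)\|_{\ell^{p'}}$ by the same block analysis; pairing $\sum b_n g_n$ with $\sum c_k\phi_k$ then yields $\|(b_n)\|_{\ell^p}\lesssim\|\sum b_n g_n\|_p$.

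The main obstacle is the interaction/cancellation control in the sufficiency direction. For the upper bound neither the nonnegativity reduction nor a crude pointwise domination of the exponential sum by its largest term suffices, since the low frequencies accumulate on each block (their sum is not comparable to a single term) and are absorbed only after integrating against $e^{-u}\,du$ and using the geometric gaps; for the lower bound the difficulty is instead the possible cancellation among terms, which forces the passage to an approximately biorthogonal dual system. In both cases the delicate point is to choose the block normalization so that the resulting kernel passes the Schur test with a constant depending only on $r$ and $p$; the merely qualitative M\"untz condition $\sum 1/\lambda_n<\infty$ does not suffice, and lacunarity enters in an essential, quantitative way.
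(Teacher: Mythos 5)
Your proposal splits into three parts of unequal standing. The direction (ii)$\Rightarrow$(i) by testing the vector $(1,-1)$ on $g_{n_k},g_{n_k+1}$ with $\lambda_{n_k+1}/\lambda_{n_k}\to1$ and dominated convergence is correct (the paper runs a similar two-term test, in sharper quantitative form, in the converse part of Cor.\ref{cor super lac}). Your blockwise upper bound is also viable, and it is worth noting that it is \emph{not} the paper's route: the paper proves the majorization half by the pointwise H\"older splitting of Prop.\ref{prop w mu Dn} together with the estimate $\sum_k\lambda_k^{1/p}t^{\lambda_k}\lesssim(1-t)^{-1/p}$ of Lemma~\ref{lem 1 sur 1-t} and Remark~\ref{rem 1 sur 1-t} (see Prop.\ref{prop gurariy rapide}), whereas your slicing of $[0,\infty)$ into $I_k=[1/\lambda_{k+1},1/\lambda_k)$ is essentially the original Gurariy--Macaev slicing of $(0,1)$, which the paper explicitly says it avoids. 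For the upper bound this is harmless: your Schur constant degrades like $(1-r^{-c})^{-1}$ as $r\to1$, but constants depending on $\Lambda$ are permitted.

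The genuine gap is in your lower bound, and it is exactly the point where the paper itself does \emph{not} reprove the theorem but quotes \cite[Cor.~9.3.4]{GL}. Your plan --- approximately biorthogonal bumps $\phi_k$ at the scale of $I_k$, off-diagonal pairings with geometric decay, then a perturbation/Schur absorption --- structurally requires the off-diagonal sum to be strictly less than the diagonal term: the pairing argument gives $\|b\|_{\ell^p}\le C\,\|\sum b_ng_n\|_p+S\,\|b\|_{\ell^p}$ and is vacuous unless $S<1$. But for ratio $r$ close to $1$ the number of monomials of comparable size at the scale $u\sim1/\lambda_k$ is of order $1/\log r$, adjacent pairings $\langle g_{k\pm1},\phi_k\rangle$ are of the same order as the diagonal pairing (choosing $\phi_k$ to annihilate the low frequencies does not help with the near side $n=k+1$), and the off-diagonal decay rate $r^{-c|n-k|}$ makes $S\approx C/(r^{c}-1)\to\infty$ as $r\to1$. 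This is precisely the regime restriction of the paper's own quantitative minoration (Th.\ref{thm r epsilon}), which pairs $J_\Lambda(a)$ against $J_\Psi(b)$ with $\psi_n=(p-1)\lambda_n$ via Young's inequality and Lemma~\ref{lem lac p p'}, and needs $2q/(r^{1/q}-1)\le\varepsilon/2$, i.e.\ $r\ge r_\varepsilon$ large; your bump construction is a rougher variant of that duality and inherits the same limitation. Nor can you rescue small $r$ by splitting $\Lambda$ into finitely many large-ratio subsequences (Remark~\ref{rem qlac fini}): that decomposition recombines for the upper bound by the triangle inequality, but the lower $\ell^p$-estimate does not pass to a union of subsequences without further work. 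So as written your argument proves (i)$\Rightarrow$(ii) only for sufficiently lacunary $\Lambda$; for arbitrary $r>1$ a genuinely different idea is needed, which is why the paper defers to \cite{GL}.
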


We shall recover and generalize partially this result: for a given sequence of weights $(w_n)_n$ and a positive finite measure $\mu$ on $[0,1)$, we study the boundedness of the operator
$$T_\mu:\left\lbrace\begin{array}{ccc}
\ell^p(w)&\longrightarrow &L^p(\mu)\\
b &\longmapsto & \sum b_nt^{\lambda_n}
\end{array}\right.~~.$$

\begin{exa}
In the case of the Lebesgue measure $\mu=m$ and when the weights are $w_n=(p\lambda_n+1)^{-1}$ or in a simpler way (when we do not care on the value of the constants) $w_n=\lambda_n^{-1}$, Th.\ref{thm gurariy} states in particular that $T_m$ is bounded when $\Lambda$ is lacunary.
\end{exa}

\begin{rem}\label{rem bornebrut}

In the case $p>1$, a (rough) sufficient condition to ensure the boundedness of $T$ is
$$\displaystyle\int_{[0,1)}\Big(\sum_n w_n^{-\frac{p'}{p}} t^{p'\lambda_n}\Big)^\frac{p}{p'}d\mu<\infty. $$
Indeed, this is just the consequence of the majorization 
$$\biindice{\sup}{b\in B_{\ell^{p}}}{b\in c_{00}}\sup_{g\in B_{L^{p'}(\mu)}}\Big|\int_{[0,1)}\sum_n b_nw_n^{-\frac{1}{p}} t^{\lambda_n}g(t)\,d\mu\Big|\le\sup_{g\in B_{L^{p'}(\mu)}}\int_{[0,1)}|g(t)|\biindice{\sup}{b\in B_{\ell^{p}}}{b\in c_{00}}\Big|\sum_n b_nw_n^{-\frac{1}{p}} t^{\lambda_n}\Big|\,d\mu. $$

Point out that in the case of standard weights $w_n\approx \lambda_n^{-1}$ and for a quasi-geometric sequence $\Lambda$, this condition can be reformulated with the help of Lemma~\ref{lem 1 sur 1-t} below as 
$$\int_{[0,1)}\frac{1}{1-t}~d\mu\approx\int_{[0,1)}\frac{1}{1-t^{p'}}\,d\mu<\infty$$ 
but we shall come back to that kind of condition later (see Prop.\ref{prop orderbounded} below for instance).
\end{rem}

To get a sharper estimation, we introduce the sequence $(D_n(p))_n$ defined for $n\in\mathbb{N}$ and $p\geq 1$, with a priori value in $\mathbb{R}_+\cup \{+\infty\}$ by
\begin{align*}
D_n(p)=\Bigg(\displaystyle\int_{[0,1)}w_n^{-\frac{1}{p}} t^{\lambda_n}\Bigg(\sum\limits_{k\ge0}w_k^{-\frac{1}{p}}t^{\lambda_k}\Bigg)^{p-1}d\mu\Bigg)^{\frac{1}{p}}.
\end{align*}

\begin{prop}\label{prop w mu Dn}
Let $p\in[1,+\infty)$. Assume that $(D_n(p))_n$ is a bounded sequence of real numbers.
Then we have for every  $b\in\ell^p(w)$,
 $$\Big\|\sum\limits_{n\geq 0} b_nt^{\lambda_n}\Big\|_{L^p(\mu)}\leq \Big(\sum\limits_{n\geq 0}|b_n|^pw_nD_n(p)^p\Big)^{\frac{1}{p}}~~.$$
\end{prop}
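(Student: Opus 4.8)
The plan is to reduce everything to a single pointwise convexity inequality and then integrate. First, since $t^{\lambda_n}\ge 0$ on $[0,1)$ and $\mu\ge 0$, the triangle inequality gives $\big\|\sum_n b_nt^{\lambda_n}\big\|_{L^p(\mu)}\le\big\|\sum_n|b_n|t^{\lambda_n}\big\|_{L^p(\mu)}$, while the right-hand side to be proved depends only on the moduli $|b_n|$; hence I may assume $b_n\ge 0$. It is then convenient to absorb the weight by setting $\phi_n(t)=w_n^{-1/p}t^{\lambda_n}$ and $a_n=b_nw_n^{1/p}$, so that $b_nt^{\lambda_n}=a_n\phi_n$, $|b_n|^pw_n=a_n^p$, and, writing $\Phi=\sum_{k\ge 0}\phi_k$, the quantity to estimate becomes $\int_{[0,1)}\big(\sum_n a_n\phi_n\big)^p\,d\mu$ while $D_n(p)^p=\int_{[0,1)}\phi_n\Phi^{p-1}\,d\mu$.

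The key step is the pointwise inequality
$$\Big(\sum_{n\ge 0}a_n\phi_n(t)\Big)^p\le\Big(\sum_{n\ge 0}a_n^p\phi_n(t)\Big)\,\Phi(t)^{p-1},$$
valid for $\mu$-almost every $t$ (and trivially where $\Phi(t)=+\infty$). For $p>1$ this is exactly Hölder's inequality with conjugate exponents $p$ and $p'$ applied to the factorization $a_n\phi_n=(a_n\phi_n^{1/p})\cdot\phi_n^{1/p'}$, using the identity $p/p'=p-1$; equivalently it is Jensen's inequality for the convex function $x\mapsto x^p$ against the finite positive measure $\sum_n\phi_n(t)\,\delta_n$ of total mass $\Phi(t)$. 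For $p=1$ it degenerates to an equality, so that the whole statement reduces there to the triangle inequality.

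It then remains to integrate this inequality against $\mu$ and to exchange sum and integral, which is legitimate since every term is nonnegative (Tonelli):
$$\int_{[0,1)}\Big(\sum_n a_n\phi_n\Big)^p\,d\mu\le\sum_{n\ge 0}a_n^p\int_{[0,1)}\phi_n\Phi^{p-1}\,d\mu=\sum_{n\ge 0}a_n^pD_n(p)^p,$$
which is the claim for finitely supported $b$. The only place where the hypothesis that $(D_n(p))_n$ is bounded is used is the passage from $c_{00}$ to all of $\ell^p(w)$: the finite-sum estimate yields $\big\|\sum_{n\le N}b_nt^{\lambda_n}\big\|_{L^p(\mu)}\le\big(\sup_nD_n(p)\big)\|b\|_{\ell^p(w)}$, so the partial sums form a Cauchy sequence converging in $L^p(\mu)$, and the inequality survives in the limit by continuity of the norm (or Fatou's lemma). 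I do not expect any genuine obstacle; the only care needed is to note that finiteness of $D_n(p)$ forces $\Phi<+\infty$ $\mu$-almost everywhere on $(0,1)$, so that the pointwise inequality is meaningful precisely where it is used.
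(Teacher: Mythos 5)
Your proof is correct and is essentially the paper's own argument: your factorization $a_n\phi_n=(a_n\phi_n^{1/p})\cdot\phi_n^{1/p'}$ is algebraically identical to the paper's splitting $b_nt^{\lambda_n}=b_nw_n^{\frac{1}{pp'}}t^{\frac{\lambda_n}{p}}\times w_n^{-\frac{1}{pp'}}t^{\frac{\lambda_n}{p'}}$, followed by the same H\"older step and integration (with $p=1$ trivial in both). The extra care you take — Tonelli for the sum--integral exchange, the passage from $c_{00}$ to all of $\ell^p(w)$, and the observation that finiteness of $D_n(p)$ forces $\Phi<+\infty$ $\mu$-a.e. — is left implicit in the paper but is sound.
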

\begin{proof}
If $p=1$ the result is obvious. Assume now that $p>1$. For any $t\in[0,1)$ and $n\in\mathbb{N}$, we have:
$$b_nt^{\lambda_n}=b_nw_n^{\frac{1}{pp'}}t^{\frac{\lambda_n}{p}}\times w_n^{-\frac{1}{pp'}}t^{\frac{\lambda_n}{p'}}~~,$$
we apply Hölder's inequality and get:
$$\Big|\sum b_n t^{\lambda_n}\Big|\leq \Big(\sum\limits_n |b_n|^pw_n^{\frac{1}{p'}}t^{\lambda_n} \Big)^{\frac{1}{p}}  \Big(\sum\limits_k w_k^{-\frac{1}{p}}t^{\lambda_k} \Big)^{\frac{1}{p'}}~~.$$
We obtain:
\begin{align*}
\int_{[0,1)}\Big|\sum b_nt^{\lambda_n}\Big|^pd\mu &\leq \int_{[0,1)}\sum |b_n|^pw_n . w_n^{-\frac{1}{p}}t^{\lambda_n}\Big( \sum\limits_k w_k^{-\frac{1}{p}}t^{\lambda_k} \Big)^{p-1}d\mu\\
&=\sum\limits_n |b_n|^pw_nD_n(p)^p~~.
\end{align*}
\end{proof}

If $(D_n(p))_n$ is a bounded sequence of real numbers, 
we define the bounded diagonal operator  $$\mathcal{D}:\ell^p(w)\to \ell^p(w)$$ acting on the canonical basis of $\ell^p(w)$ whose diagonal entries are the numbers $D_n(p)$. In other words, in that case, $T_\mu$ and $\mathcal{D}$ are bounded, and we have$$\forall b\in\ell^p(w),~\|T_\mu(b)\|_{L^p(\mu)}\leq \|\mathcal{D}(b)\|_{\ell^p(w)}~~.$$
This gives informations about the approximation numbers of $T_\mu$. Let us specify this notion. We shall be interested in how far from compact (the essential norm) or, on the contrary, how strongly compact (possibly Schatten in the Hilbert framework) are the Carleson embeddings. A way to measure this is to estimate the approximation numbers:

\begin{defin}
For a bounded operator $S:X\rightarrow Y$ between two separable Banach spaces $X,Y$,
the {\em approximation numbers} $(a_n(S))_n$ of $S$ are defined for $n\geq 1$ by 
$$a_{n}(S)=\inf\{\|S-R\|,rank(R)<n\}~~.$$

The {\em essential norm} of $S$ is defined by $$\|S\|_e=\inf\{\|S-K\|,K\text{ compact}\}~~.$$It is the distance from $S$ to the compact operators. 
\end{defin}

We shall use in the sequel the following notions of operator ideal. 

\begin{defin}\quad

\begin{itemize}
\item An operator $S:X\rightarrow Y$ is {\em nuclear} if there is a sequence of rank-one operators $(R_n)$ satisfying $S(x)=\sum\limits_n R_n(x)$ for every $x\in X$ with $\sum\limits_n\|R_n\|<+\infty.$
The {\em nuclear norm} of $S$ is defined as $$\|S\|_{\mathcal{N}}=\inf\Big\{\sum\limits_{n}\|R_n\|,rank(R_n)=1,\sum\limits_n R_n=S\Big\}~~.$$

\item An operator $S:X\rightarrow L^p(\mu)$ is {\em order bounded} if there exists a positive function $h\in L^p(\mu)$ such that for every $x\in B_X$ and for $\mu-$almost every $t\in \Omega$ we have $|S(x)(t)|\leq h(t).$

\item For $r>0$ and when $X, Y$ are Hilbert spaces, we say that a (compact) operator $S:X\rightarrow Y$ belongs to the {\em Schatten class} $\mathcal{S}^r$ if $$\sum\limits_{n}a_n(T)^r<+\infty.$$ In this case, we define its {\em Schatten norm} by $\|S\|_{\mathcal{S}^r}=\Big(\sum\limits_na_n(S)^r\Big)^{\frac{1}{r}}$. 

\end{itemize}

\end{defin}

Recall that nuclear and Schatten class operators are always compact.

Of course, the Schatten norm is really a norm when $r\geq 1$. The $\mathcal{S}^2$ class is also called the class of {\em Hilbert-Schmidt} operators. 
\medskip

For technical reasons, we introduce the following notation: for a bounded sequence $(u_n)_n$ in $\mathbb{R}_+$, we define $(u_N^\ast)_N$ the {\em decreasing rearrangement} of $(u_n)_n$ by $$u_N^\ast=\biindice{\inf}{A\subset\mathbb{N}}{|A|=N}\sup\{u_n,n\not\in A\}~~.$$
We have $\lim\limits_{N\rightarrow+\infty}u_N^\ast=\limsup\limits_{n\rightarrow+\infty}u_n.$

Now, we can state, 

\begin{prop}\label{prop an(T) Dn}
If $(D_n(p))_n$ is a bounded sequence of real numbers, then we have
\begin{enumerate}[(i)]
\item $a_{N+1}(T_\mu)\leq D_N(p)^\ast.$
\item $\|T_\mu\|_{p}\leq \sup\limits_{n\in\mathbb{N}}D_n(p)$.
\item $\|T_\mu\|_e\leq \limsup\limits_{n\rightarrow+\infty} D_n(p)$.
\item $\forall p\ge1$, $\dis\|T_\mu\|_{\mathcal N}\le\sum\limits_{n\ge0} w_n^{-\frac{1}{p}}\big\|t^{\lambda_n}\big\|_{L^p(\mu)}\,.$
\item If $p=2$, then for any $r>0$, $~~\|T_\mu\|_{\mathcal{S}^r}\leq \Big(\sum\limits_{n\ge0}D_n(2)^r\Big)^{\frac{1}{r}}~~.$
\end{enumerate}
\end{prop}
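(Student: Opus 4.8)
The plan is to build everything on the pointwise domination already secured in Proposition~\ref{prop w mu Dn}, namely $\|T_\mu(b)\|_{L^p(\mu)}\le\|\mathcal D(b)\|_{\ell^p(w)}$, and to observe that its proof localizes to arbitrary subsets of indices. The decisive point is (i); the remaining assertions (ii)--(v) are then either special cases of it or consequences of standard facts about operator ideals.

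For (i), I would fix $N$ and a set $A\subset\mathbb N$ with $|A|=N$, and introduce the finite-rank operator $R_A:\ell^p(w)\to L^p(\mu)$, $R_A(b)=\sum_{n\in A}b_nt^{\lambda_n}$, whose rank is at most $|A|=N$, so that it competes in the infimum defining $a_{N+1}(T_\mu)$. Then $(T_\mu-R_A)(b)=\sum_{n\notin A}b_nt^{\lambda_n}=T_\mu(b')$, where $b'$ is the restriction of $b$ to $\mathbb N\setminus A$. First I would rerun the Hölder estimate of Proposition~\ref{prop w mu Dn}, but summing only over $n\notin A$ on the left while keeping the full sum $\sum_k w_k^{-1/p}t^{\lambda_k}$ in the conjugate factor (which only enlarges the bound); this yields $\|T_\mu(b')\|_{L^p(\mu)}^p\le\sum_{n\notin A}|b_n|^pw_nD_n(p)^p$, where crucially the very same numbers $D_n(p)$ reappear. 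Bounding $D_n(p)\le\sup_{n\notin A}D_n(p)$ and using $\sum_{n\notin A}|b_n|^pw_n\le\|b\|_{\ell^p(w)}^p$ gives $\|T_\mu-R_A\|\le\sup_{n\notin A}D_n(p)$, and taking the infimum over all such $A$ produces exactly $a_{N+1}(T_\mu)\le\inf_{|A|=N}\sup_{n\notin A}D_n(p)=D_N(p)^\ast$.

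Granting (i), the rest is bookkeeping. Statement (ii) is the case $N=0$, since $D_0(p)^\ast=\sup_nD_n(p)$ and $a_1(T_\mu)=\|T_\mu\|_p$. For (iii), finite-rank operators are compact, so $\|T_\mu\|_e\le\inf_N a_N(T_\mu)=\lim_N a_{N+1}(T_\mu)\le\lim_N D_N(p)^\ast=\limsup_n D_n(p)$, the last equality being the rearrangement identity recalled just before the statement. For (v) with $p=2$, I would write $\|T_\mu\|_{\mathcal S^r}^r=\sum_{N\ge0}a_{N+1}(T_\mu)^r\le\sum_{N\ge0}(D_N(2)^\ast)^r$; when the right-hand side is finite the sequence $(D_n(2))_n$ tends to $0$, its decreasing rearrangement is a genuine reordering, and $\sum_N(D_N(2)^\ast)^r=\sum_n D_n(2)^r$, while if $\sum_n D_n(2)^r=+\infty$ the inequality is trivial. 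Finally (iv) is independent of (i): decompose $T_\mu=\sum_n R_n$ into the rank-one maps $R_n(b)=b_nt^{\lambda_n}$, compute $\|R_n\|=w_n^{-1/p}\|t^{\lambda_n}\|_{L^p(\mu)}$ from $|b_n|\le w_n^{-1/p}\|b\|_{\ell^p(w)}$, and read off $\|T_\mu\|_{\mathcal N}\le\sum_n\|R_n\|$ (the bound being vacuous when the series diverges, and a legitimate nuclear representation otherwise).

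The main obstacle, and really the only nonroutine step, is the localization in (i): one must check that truncating the expansion on a finite index set $A$ leaves a tail operator still governed by the \emph{same} coefficients $D_n(p)$, so that minimizing over $A$ reproduces the decreasing rearrangement rather than some weaker quantity. Everything else reduces to the domination inequality together with textbook properties of approximation numbers, the essential norm, the nuclear norm and the Schatten classes.
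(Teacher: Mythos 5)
Your proposal is correct and follows essentially the same route as the paper: the key point (i) is obtained by competing with the finite-rank truncation $\sum_{n\in A}\varphi_n^\ast\otimes t^{\lambda_n}$ and applying the domination of Proposition~\ref{prop w mu Dn} to the tail coefficients (your re-run of the H\"older estimate with the full conjugate factor is exactly what makes the paper's direct application of that proposition to the truncated sequence legitimate), and (ii)--(v) are handled identically, including the rearrangement bijection in (v) and the rank-one decomposition with $\|\varphi_n^\ast\|=w_n^{-1/p}$ in (iv).
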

\begin{proof}
We first prove $(i)$. For $n\in\mathbb{N}$, we denote $\varphi_n^\ast:\ell^p(w)\rightarrow\C$ the functional on $\ell^p(w)$ defined by $\varphi_n^\ast(u)=u_n$ for a sequence $u=(u_n)_n\in\ell^p(w)$. We define also $g_n\in L^p(\mu)$ by $g_n(t)=t^{\lambda_n}$.
For any integer $N$ and $A\subset\mathbb{N}$ with $|A|=N$, we have:
\begin{align*}
a_{N+1}(T_\mu)\leq \Big\|T_\mu-\sum\limits_{n\in A}\varphi_n^\ast\otimes g_n\Big\|~~.
\end{align*}
We fix $b\in\ell^p(w)$ and apply Prop.\ref{prop w mu Dn}:
$$\Big\|T_\mu(b)-\sum\limits_{n\in A}\varphi_n^\ast (b) g_n\Big\|=\Big\|\sum\limits_{n\not\in A}b_nt^{\lambda_n}\Big\|_{L^p(\mu)}\leq \sup\limits_{n\not\in A}D_n(p)\|b\|_{\ell^p(w)}$$
and so $(i)$ holds. 

The points $(ii)$ and $(iii)$ are direct consequences of $(i)$.

The assertion $(iv)$ follows easily from the natural decomposition $T_\mu(b)=\sum\limits_n\varphi_n^\ast(b)t^{\lambda_n}$ and the fact that $\|\varphi^\ast_n\|=w_n^{-\frac{1}{p}}$.

For $(v)$: if $(D_n(2))_n\not\in\ell^r$ then the result is obvious. Else, we have in particular $D_n(2)\rightarrow 0$ when $n\rightarrow+\infty$. Since for all $\varepsilon>0$, the set $\{n,D_n(2)\geq \varepsilon\}$ is finite, there exists a bijection $\varphi:\mathbb{N}\rightarrow\mathbb{N}$ such that for any $n\in\mathbb{N}$, $D_n(2)^\ast=D_{\varphi(n)}(2)$. We have:
$$\sum\limits_{N}a_{N+1}(T_\mu)^r\leq \sum\limits_{N} (D_N(2)^{\ast})^r=\sum\limits_{n}D_{\varphi(n)}(2)^r=\sum\limits_{n}D_n(2)^r.$$
\end{proof}

\begin{lem}\label{lem 1 sur 1-t}
Let $\alpha\in\mathbb{R}_+^\ast$.
Assume that $\Lambda$ is a quasi-geometric sequence.
 Then there are two constants $C_1,C_2\in\mathbb{R}_+^\ast$ such that for any $t\in[0,1)$ we have:
$$C_1\Big(\frac{1}{1-t}\Big)^\alpha\leq \sum\limits_{n}\lambda_n^\alpha t^{\lambda_n}\leq C_2\Big(\frac{1}{1-t}\Big)^\alpha\cdot$$
\end{lem}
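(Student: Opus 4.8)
The plan is to compare the sum $\sum_n \lambda_n^\alpha t^{\lambda_n}$ with the integral $\int_0^\infty x^\alpha t^x\,dx$, exploiting that $\Lambda$ is quasi-geometric so that the exponents $\lambda_n$ are spread out roughly like a geometric sequence. First I would compute the model integral exactly: with the substitution $u = x\log(1/t)$ one gets
\begin{equation*}
\int_0^\infty x^\alpha t^x\,dx = \int_0^\infty x^\alpha e^{-x\log(1/t)}\,dx = \frac{\Gamma(\alpha+1)}{\big(\log(1/t)\big)^{\alpha+1}}.
\end{equation*}
Since $\log(1/t)\approx 1-t$ as $t\to 1^-$ (and the two quantities are comparable, up to constants, on all of $[0,1)$ after separating a compact piece $t\in[0,\tfrac12]$ where everything is bounded above and below), this integral behaves like $(1-t)^{-(\alpha+1)}$, not $(1-t)^{-\alpha}$. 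This is the key discrepancy: a continuous sum of the densities $x^\alpha t^x$ gains one extra power. The point of the quasi-geometric hypothesis is precisely that the discrete sum over the sparse exponents $\lambda_n$ loses that extra power and lands on $(1-t)^{-\alpha}$.

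To make this precise I would fix $t\in[0,1)$ close to $1$ and split $\N$ according to whether $\lambda_n \le \frac{1}{1-t}$ or $\lambda_n > \frac{1}{1-t}$ (equivalently, compare $\lambda_n\log(1/t)$ with $1$). For the upper bound: the function $x\mapsto x^\alpha t^x$ increases up to its maximum near $x_0 = \alpha/\log(1/t)\approx \alpha/(1-t)$ and decreases afterwards. Because $1<r\le \lambda_{n+1}/\lambda_n\le R$, consecutive exponents differ by a fixed multiplicative factor, so on each side of $x_0$ the values $\lambda_n^\alpha t^{\lambda_n}$ are dominated by a geometric progression whose largest term is comparable to the peak value $x_0^\alpha t^{x_0}\approx (1-t)^{-\alpha}$. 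Summing the two geometric tails (the ratios being bounded away from $1$ uniformly in $t$, thanks to the gaps $r,R$) yields $\sum_n \lambda_n^\alpha t^{\lambda_n}\le C_2(1-t)^{-\alpha}$. The lower bound is easier: I would isolate a single well-chosen index $n_t$ for which $\lambda_{n_t}\approx \frac{1}{1-t}$—such an index exists because the multiplicative gaps are bounded above by $R$, so the sequence $(\lambda_n)$ cannot jump over the window $[\tfrac{1}{R(1-t)},\tfrac{1}{1-t}]$—and estimate the whole sum below by that one term, which is $\approx (1-t)^{-\alpha}$.

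The main obstacle is the upper bound, specifically making the "sum dominated by two geometric series peaked at $x_0$" argument uniform in $t$ as $t\to 1^-$. The ratio of consecutive terms $\frac{\lambda_{n+1}^\alpha t^{\lambda_{n+1}}}{\lambda_n^\alpha t^{\lambda_n}} = \big(\tfrac{\lambda_{n+1}}{\lambda_n}\big)^\alpha t^{\lambda_{n+1}-\lambda_n}$ must be shown to be bounded away from $1$ on the far tail (where $\lambda_n$ is large compared to $1/(1-t)$); here $t^{\lambda_{n+1}-\lambda_n}$ is small and beats the bounded factor $(\lambda_{n+1}/\lambda_n)^\alpha\le R^\alpha$, but one has to track that the decay kicks in uniformly once $\lambda_n$ passes the threshold $\frac{1}{1-t}$. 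On the near side, where $\lambda_n\le \frac{1}{1-t}$, the terms $\lambda_n^\alpha t^{\lambda_n}\le \lambda_n^\alpha$ form an increasing geometric progression with ratio $\ge r^\alpha>1$, so their sum is comparable to the last (largest) term, again $\lesssim (1-t)^{-\alpha}$. Assembling these two halves and absorbing the transition region into the constants gives the claimed two-sided estimate with $C_1,C_2$ depending only on $\alpha,r,R$.
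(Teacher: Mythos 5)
Your argument is correct, but it proceeds along a genuinely different route from the paper's. The paper does not localize at the peak of $x\mapsto x^\alpha t^x$; instead it converts the lacunary sum into the full power series by a block-summation trick: quasi-geometricity gives $\lambda_n^\alpha\approx(\lambda_{n+1}-\lambda_n)^\alpha\approx\lambda_{n+1}^\alpha$, whence
\[
\lambda_n^\alpha t^{\lambda_n}\approx\sum_{\lambda_n\le m<\lambda_{n+1}}m^{\alpha-1}\,t^{\lambda_n},
\]
and then the squeeze $t^m\lesssim t^{\lambda_n}\lesssim t^{m/R}$ for $\lambda_n\le m<\lambda_{n+1}$ yields both bounds at once from the classical estimate $\sum_m m^{\alpha-1}t^m\approx(1-t)^{-\alpha}$ (applied at $t$ and at $t^{1/R}$, using $1-t^{1/R}\approx 1-t$). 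That proof is shorter but leans on the model series; yours is self-contained and has the merit of isolating which hypothesis does what: the lower bound uses only the upper-ratio constant $R$ (the sequence cannot jump over the window $[\tfrac{1}{R(1-t)},\tfrac{1}{1-t}]$), while the upper bound is driven by the lacunarity constant $r$ --- indeed, with a small adjustment your upper bound survives for merely lacunary $\Lambda$, in accordance with Remark~\ref{rem 1 sur 1-t}, which the paper instead obtains by enlarging $\Lambda$ to a quasi-geometric supersequence. One point to tighten in what you call the main obstacle: just past the threshold $1/(1-t)$ the consecutive ratio is only bounded by $R^\alpha t^{(r-1)\lambda_n}\le R^\alpha e^{-(r-1)}$ (since $t^{1/(1-t)}\le e^{-1}$ on $(0,1)$), which need not be $<1$; the remedy is to take the far region to be $\{\lambda_n>K/(1-t)\}$ with $K$ so large that $R^\alpha e^{-(r-1)K}<1$, and to observe that the at most $\log_r K+1$ transitional indices each contribute at most $\sup_{x>0}x^\alpha t^x=\big(\alpha/\log(1/t)\big)^\alpha e^{-\alpha}\lesssim(1-t)^{-\alpha}$; this is exactly your ``absorb the transition region into the constants'' step, so it is a matter of writing, not a gap. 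Finally, a caveat you share with the paper's own statement: since $\lambda_n>0$ for a quasi-geometric sequence, the sum tends to $0$ as $t\to0^+$ while $(1-t)^{-\alpha}\to1$, so the lower bound is really uniform only on intervals $[t_0,1)$ with $t_0>0$; your claim that everything is bounded above \emph{and below} on $[0,\tfrac12]$ has this same (harmless) defect, as all applications concern $t\to1^-$.
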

\begin{proof}
Since $\Lambda$ is quasi-geometric, it is $r$-lacunary for some $r>1$, so there exists a constant $C=(r-1)^{-1}$ such that for any $n\in\mathbb{N},\lambda_n\leq C(\lambda_{n+1}-\lambda_n)$. Moreover, there is a constant $R>1$ such that $\lambda_{n+1}\leq  R\lambda_n$ and hence we have:
$$\lambda_n^{\alpha}\approx(\lambda_{n+1}-\lambda_n)^\alpha\approx \lambda_{n+1}^\alpha$$
where the underlying constants do not depend on $n$. We obtain:
\begin{align*}
\sum\limits_n\lambda_n^{\alpha}t^{\lambda_n}&\approx \sum\limits_n(\lambda_{n+1}-\lambda_n)^{\alpha}t^{\lambda_n}
\approx \sum\limits_n\sum\limits_{\lambda_n\leq m<\lambda_{n+1}}(\lambda_{n+1}-\lambda_n)^{\alpha-1}t^{\lambda_n}\\
&\approx \sum\limits_n\sum\limits_{\lambda_n\leq m<\lambda_{n+1}} m^{\alpha-1}t^{\lambda_n}
\end{align*}
For $m$ such that $\lambda_n\leq m<\lambda_{n+1}$, we have $t^m\lesssim t^{\lambda_n}\lesssim t^{\frac{m}{R}}$ and so we obtain:
$$\sum\limits_n\lambda_n^\alpha t^{\lambda_n}\lesssim \sum\limits_{m\ge0} m^{\alpha-1}t^{\frac{m}{R}}\lesssim \Big(\frac{1}{1-t^{\frac{1}{R}}}\Big)^{\alpha}\lesssim \Big(\frac{1}{1-t}\Big)^{\alpha}\cdot$$
On the other hand we have
$$\sum\limits_n\lambda_n^\alpha t^{\lambda_n}\gtrsim \sum\limits_{m\in\mathbb{N}} m^{\alpha-1}t^m\gtrsim \Big(\frac{1}{1-t}\Big)^{\alpha}\cdot$$
\end{proof}

\begin{rem}\label{rem 1 sur 1-t}
If $\Lambda$ is only lacunary, the majorization part of the result above still holds.
Indeed, the proof above can be easily adapted, but anyway, we can also notice that there  exists a quasi-geometric sequence $\Lambda'=(\lambda_n')_n$ which contains $\Lambda$, and we have $$\sum\limits_{n\in\mathbb{N}}\lambda_n^{\alpha}t^{\lambda_n}\leq \sum\limits_{n\in\mathbb{N}}\lambda_n'^{\alpha}t^{\lambda_n'}\leq C_2\frac{1}{(1-t)^{\alpha}}\cdot$$
\end{rem}

We can give a new proof of the majorization part of the theorem of Gurariy-Macaev (Th.\ref{thm gurariy}). It follows from the next proposition:

\begin{prop}\label{prop gurariy rapide}
Let $p\in[1,+\infty)$. Assume that the weights are given by $w_n=\lambda_n^{-1}$ or $(p\lambda_n+1)^{-1}$. If $\Lambda$ is lacunary and $\mu$ is the Lebesgue measure, then 
 $(D_n(p))_n$ is a bounded sequence.
\end{prop}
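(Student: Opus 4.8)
The plan is to reduce everything to a single Beta-type integral and to invoke the majorization from Remark~\ref{rem 1 sur 1-t}. First I would observe that the two proposed weights are comparable, since $(p\lambda_n+1)^{-1}\approx \lambda_n^{-1}$ uniformly in $n$ (the ratio is bounded above and below), so that the corresponding quantities $D_n(p)$ are comparable and it suffices to treat the case $w_n=\lambda_n^{-1}$, for which $w_n^{-1/p}=\lambda_n^{1/p}$. The case $p=1$ is then immediate: the factor $(\,\cdot\,)^{p-1}$ disappears and $D_n(1)=\int_0^1\lambda_n t^{\lambda_n}\,dt=\lambda_n/(\lambda_n+1)\le 1$.

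For $p>1$, the core step is to control the inner sum. Since $\Lambda$ is lacunary, Remark~\ref{rem 1 sur 1-t} applied with $\alpha=1/p$ provides a constant (depending only on $p$ and $\Lambda$) such that $\sum_k \lambda_k^{1/p}t^{\lambda_k}\lesssim (1-t)^{-1/p}$ for all $t\in[0,1)$. Raising this to the power $p-1$ and plugging into the definition of $D_n(p)$, I would obtain
$$D_n(p)^p\lesssim \lambda_n^{1/p}\int_0^1 t^{\lambda_n}(1-t)^{\frac{1}{p}-1}\,dt,$$
where I used $-(p-1)/p=1/p-1$.

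The remaining integral is exactly the Beta function $B(\lambda_n+1,1/p)=\Gamma(\lambda_n+1)\Gamma(1/p)/\Gamma(\lambda_n+1+\tfrac1p)$, and the only genuine analytic input I need is its growth rate. By the classical Gamma asymptotics $\Gamma(x+1)/\Gamma(x+1+a)\sim x^{-a}$ as $x\to+\infty$, one gets $B(\lambda_n+1,1/p)\approx \lambda_n^{-1/p}$ (this can equally be proved by elementary means, by splitting the integral near $t=1$ where the mass concentrates, without any special-function machinery). Combining this with the previous display yields $D_n(p)^p\lesssim \lambda_n^{1/p}\cdot\lambda_n^{-1/p}=1$, so $(D_n(p))_n$ is bounded.

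The main obstacle — and really the only nontrivial point — is the interplay between the two occurrences of $\lambda_n$: the polynomial pre-factor $\lambda_n^{1/p}$ coming from the weight must cancel precisely against the decay $\lambda_n^{-1/p}$ of the Beta integral. Everything hinges on matching the exponent $1/p$ in Remark~\ref{rem 1 sur 1-t} to the exponent of the weight, so that the two powers of $\lambda_n$ annihilate; any mismatch would leave an unbounded power of $\lambda_n$. Once this bookkeeping is set up correctly the estimate is clean, and the constants depend only on $p$ and on the ratio of lacunarity of $\Lambda$.
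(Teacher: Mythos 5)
Your proof is correct and follows essentially the same route as the paper: both arguments hinge on bounding the inner sum by $(1-t)^{-1/p}$ via the lacunarity estimate of Remark~\ref{rem 1 sur 1-t} and then showing $\lambda_n^{1/p}\int_0^1 t^{\lambda_n}(1-t)^{-1/p'}\,dt$ is bounded. The only (cosmetic) difference is in the last routine step, where you identify the integral as $B(\lambda_n+1,\tfrac1p)\approx\lambda_n^{-1/p}$ via Gamma asymptotics, whereas the paper obtains the same bound elementarily by splitting the integral at $1-\tfrac{1}{\lambda_n}$ --- precisely the alternative you mention in passing.
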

\begin{proof}
 From Lemma~\ref{lem 1 sur 1-t} and Remark~\ref{rem 1 sur 1-t} we get:
\begin{align*}
D_n(p)^p& =\lambda_n^{\frac{1}{p}}\int t^{\lambda_n}\Big(\sum\limits_{k\in\mathbb{N}}\lambda_k^{\frac{1}{p}}t^{\lambda_k}\Big)^{p-1}dt \\
&\lesssim \lambda_n^{\frac{1}{p}}  \int_0^1 t^{\lambda_n}\Big(\frac{1}{1-t}\Big)^{\frac{1}{p'}}dt\\
&=  \lambda_n^{\frac{1}{p}} \int_{0}^{1-\frac{1}{\lambda_n}} t^{\lambda_n}\Big(\frac{1}{1-t}\Big)^{\frac{1}{p'}}dt +  \lambda_n^{\frac{1}{p}}\int_{1-\frac{1}{\lambda_n}}^1 t^{\lambda_n}\Big(\frac{1}{1-t}\Big)^{\frac{1}{p'}}dt \\
&\leq \lambda_n^{\frac{1}{p}}\lambda_n^{\frac{1}{p'}}\int_0^1t^{\lambda_n}dt + \lambda_n^{\frac{1}{p}}\int_{1-\frac{1}{\lambda_n}}^1 (1-t)^{-\frac{1}{p'}}dt\\
&\leq \frac{\lambda_n}{\lambda_n+1} + \lambda_n^{\frac{1}{p}} \frac{p}{\lambda_n^{\frac{1}{p}}}~~.
\end{align*}
We obtain that $D_n(p)$ is a bounded sequence of real numbers.
\end{proof}

From Prop.\ref{prop w mu Dn}, we obtain as claimed:
$$\Big\|\sum\limits_{n\in\mathbb{N}}b_nt^{\lambda_n}\Big\|_{p}\lesssim \Big(\sum\limits_{n\in\mathbb{N}}\frac{|b_n|^p}{\lambda_n}\Big)^{\frac{1}{p}}~~,$$
for any $b\in c_{00}$, when $\Lambda$ is lacunary.

Let us mention that from Lemma~\ref{lem 1 sur 1-t} and the Gurariy-Macaev's Theorem, one can easily get an estimation of the point evaluation on $M_\Lambda^p$:

\begin{prop}\label{prop evalponct}
Let $\Lambda$ be a quasi-geometric sequence and $p\ge1$. For any $t\in[0,1)$, the point evaluation $f\in M_\Lambda^p\longmapsto \delta_t(f)=f(t)$ satisfies
$$\big\|\delta_t\big\|_{(M_\Lambda^p)^\ast}\approx \frac{1}{(1-t)^\frac{1}{p}}\,\cdot$$

A fortiori, when $\Lambda$ is lacunary, we have $\dis\big\|\delta_t\big\|_{(M_\Lambda^p)^\ast}\lesssim \frac{1}{(1-t)^\frac{1}{p}}\,\cdot$
\end{prop}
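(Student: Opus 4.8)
The plan is to combine the Gurariy--Macaev equivalence (Theorem~\ref{thm gurariy}) with a duality argument, reducing the computation of $\|\delta_t\|$ to the norm of a single functional on $\ell^p(w)$, and then to feed the resulting expression into Lemma~\ref{lem 1 sur 1-t}. First I would set $w_n=(p\lambda_n+1)^{-1}$ and recall that, since a quasi-geometric sequence is in particular lacunary, Theorem~\ref{thm gurariy} gives $\|f\|_p\approx\|b\|_{\ell^p(w)}$ for $f=\sum_n b_nt^{\lambda_n}\in M_\Lambda^p$, with constants depending only on $p$ and $\Lambda$. Consequently, writing $\ell_t(b)=\sum_n b_nt^{\lambda_n}$,
$$\|\delta_t\|_{(M_\Lambda^p)^\ast}\approx\sup\Big\{\Big|\sum_n b_nt^{\lambda_n}\Big|:\|b\|_{\ell^p(w)}\le1\Big\}=\|\ell_t\|_{(\ell^p(w))^\ast},$$
so everything reduces to evaluating the dual norm of $\ell_t$.

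For $p>1$ I would identify this dual norm through Hölder's inequality ($\ell^p$--$\ell^{p'}$ duality): splitting $b_nt^{\lambda_n}=(b_nw_n^{1/p})(w_n^{-1/p}t^{\lambda_n})$ yields
$$\|\ell_t\|_{(\ell^p(w))^\ast}=\Big(\sum_n w_n^{-p'/p}t^{p'\lambda_n}\Big)^{1/p'}\approx\Big(\sum_n \lambda_n^{p'/p}(t^{p'})^{\lambda_n}\Big)^{1/p'},$$
using $w_n^{-p'/p}=(p\lambda_n+1)^{p'/p}\approx\lambda_n^{p'/p}$. Applying Lemma~\ref{lem 1 sur 1-t} with exponent $\alpha=p'/p$ and variable $s=t^{p'}\in[0,1)$ gives $\sum_n\lambda_n^{p'/p}s^{\lambda_n}\approx(1-s)^{-p'/p}$, hence $\|\ell_t\|\approx(1-t^{p'})^{-1/p}$. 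I would then remove the exponent $p'$ via the elementary two-sided bound $1-t\le1-t^{p'}\le p'(1-t)$ for $t\in[0,1)$ (from $t^{p'}\le t$ and the mean value theorem), which gives $1-t^{p'}\approx1-t$ and therefore $\|\delta_t\|\approx(1-t)^{-1/p}$.

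For $p=1$ (with $w_n=(\lambda_n+1)^{-1}$) the same reduction applies, but the dual of $\ell^1(w)$ turns the functional into a supremum: $\|\ell_t\|_{(\ell^1(w))^\ast}=\sup_n(\lambda_n+1)t^{\lambda_n}$, so I must show $\sup_n(\lambda_n+1)t^{\lambda_n}\approx(1-t)^{-1}$. The upper bound is immediate from $\ln t\le-(1-t)$, giving $(\lambda_n+1)t^{\lambda_n}\le\sup_{x\ge0}(x+1)e^{-x(1-t)}=\frac{e^{-t}}{1-t}\le(1-t)^{-1}$. The lower bound is where I expect the only real work: using that the ratios $\lambda_{n+1}/\lambda_n$ are bounded above and below, I would locate an index $n$ with $\lambda_n$ comparable to the continuous maximizer $x^\ast\approx(1-t)^{-1}$ (the quasi-geometric sequence meets the window $[x^\ast/R,x^\ast]$ for $t$ close to $1$) and then check $(\lambda_n+1)t^{\lambda_n}\gtrsim(1-t)^{-1}$ there, the range of $t$ bounded away from $1$ being a matter of constants. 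This is the main obstacle: for $p>1$ the transition from the sum to its value is delivered cleanly by Lemma~\ref{lem 1 sur 1-t}, whereas for $p=1$ the functional is a supremum and must be matched to the continuous maximum by hand.

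Finally, for the \emph{a fortiori} statement only the upper bound is needed when $\Lambda$ is merely lacunary. Choosing a quasi-geometric $\Lambda'\supseteq\Lambda$ as in Remark~\ref{rem 1 sur 1-t} gives $M_\Lambda^p\subseteq M_{\Lambda'}^p$, and since restricting a functional to a subspace does not increase its norm,
$$\|\delta_t\|_{(M_\Lambda^p)^\ast}\le\|\delta_t\|_{(M_{\Lambda'}^p)^\ast}\lesssim\frac{1}{(1-t)^{1/p}},$$
which is the desired conclusion.
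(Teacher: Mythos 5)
Your proof is correct and follows essentially the same route as the paper's: the Gurariy--Macaev theorem reduces $\|\delta_t\|_{(M_\Lambda^p)^\ast}$ to the $\ell^{p'}$-norm of $\big(\lambda_n^{1/p}t^{\lambda_n}\big)_n$, which Lemma~\ref{lem 1 sur 1-t} (applied with $\alpha=p'/p$ at the point $t^{p'}$, together with $1-t^{p'}\approx 1-t$) evaluates as $(1-t)^{-1/p}$. The details you supply for $p=1$ --- the weighted $\ell^\infty$ supremum $\sup_n(\lambda_n+1)t^{\lambda_n}$ matched to the continuous maximizer via the quasi-geometric ratio bounds --- are exactly the adaptation the paper leaves to the reader ("without using Lemma~\ref{lem 1 sur 1-t}"), and your subspace-restriction argument for the a fortiori lacunary case is a valid rendering of what the paper states without proof.
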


\begin{proof} We fix $p>1$. Since $\Lambda$ is in particular lacunary, the Gurariy-Macaev theorem gives:
$$\big\|\delta_t\big\|_{(M_\Lambda^p)^\ast}=\sup_{f\in B_{M_\Lambda^p}}|f(t)|\approx\sup_{a\in B_{\ell^p}}\Big|\sum_{n\ge0}\lambda_n^\frac{1}{p}a_n t^{\lambda_n}\Big|=\Big(\sum_{n\ge0}\lambda_n^\frac{p'}{p} t^{p'\lambda_n}\Big)^\frac{1}{p'}$$
where the underlying constants depend on $p$ and $\Lambda$.
We conclude with Lemma~\ref{lem 1 sur 1-t}.

In the case $p=1$, we can easily adapt the argument, without using Lemma \ref{lem 1 sur 1-t}.
\end{proof}

\section{Revisiting the classical case}

In this section, we focus  mainly on the case $p>1$ and we shall consider the Lebesgue measure $\mu=m$ on $[0,1]$. We define the operator $$J_\Lambda:\left\lbrace\begin{array}{ccc}
\ell^p(\omega) & \longrightarrow & M_\Lambda^p\\
b&\longmapsto & \sum\limits_n b_nt^{\lambda_n}
\end{array}\right.~~$$
where the weights $\omega=(\omega_n)$ are given by
$\omega_n=(p\lambda_n+1)^{-1}=\|t^{\lambda_n}\|_p^p$. In particular, if we denote by $(e_k)_k$ the canonical basis of $\ell^p(\omega)$, we have $$\forall k\in\mathbb{N},~\|J_\Lambda(e_k)\|_{p}=\|e_k\|_{\ell^p(\omega)}~~.$$

The theorem of Gurariy-Macaev says that $J_\Lambda$ is an isomorphism if and only if $\Lambda$ is lacunary. Our Proposition \ref{prop gurariy rapide} proves as well that $J_\Lambda$ is bounded when $\Lambda$ is lacunary.

We are going to recover the boundedness of $J_\Lambda$ refining the method used for Prop.\ref{prop gurariy rapide}, in order to get a sharper estimate of the norm. Actually, we prove that $J_\Lambda$ is bounded if and only if $\Lambda$ is quasi-lacunary or $p=1$. Our approach is different from the one of Gurariy-Macaev (which was based on some slicing of the interval $(0,1)$), that is why we are able to control the constants of the norms with explicit quantities depending on the ratio of lacunarity (and $p$) only. As a consequence, we shall get that for $p\in(1,+\infty)$, $J_\Lambda$ is an asymptotical isometry if and only if $\Lambda$ is super-lacunary.

\begin{lem}\label{lem lac p p'}
Let $\alpha\in(0,+\infty)$, $p\in(1,+\infty)$ and $(q_n)_n$ be an $r$-lacunary sequence. We have $$\sup\limits_{n\in\mathbb{N}}\sum\limits_{\substack{k\in\mathbb{N}\\k\not=n}}\Bigg(\frac{q_n^{\frac{1}{p}}q_k^{\frac{1}{p'}}}{\displaystyle\frac{q_n}{p}+\frac{q_k}{p'}}\Bigg)^\alpha\leq \frac{p'^{\alpha}}{r^{\frac{\alpha}{p}}-1}+\frac{p^\alpha}{r^{\frac{\alpha}{p'}}-1}\cdot$$
\end{lem}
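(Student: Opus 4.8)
The plan is to fix $n$ and split the sum over $k\neq n$ into the two regimes $k>n$ and $k<n$, bounding each summand by discarding the "far" term of the denominator $\frac{q_n}{p}+\frac{q_k}{p'}$, and then exploiting lacunarity to sum a geometric series. No sharp inequality on the denominator is needed: the crude bound that a sum of two nonnegative quantities dominates either of them already gives the correct constants.

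First I would treat the terms with $k>n$, where $q_k$ is large. Discarding $\frac{q_n}{p}\ge0$ and using $\frac{q_n}{p}+\frac{q_k}{p'}\ge\frac{q_k}{p'}$ yields
$$\frac{q_n^{\frac{1}{p}}q_k^{\frac{1}{p'}}}{\frac{q_n}{p}+\frac{q_k}{p'}}\le p'\Big(\frac{q_n}{q_k}\Big)^{\frac{1}{p}}.$$
Since $(q_n)_n$ is $r$-lacunary, $q_k\ge r^{k-n}q_n$ for $k>n$, hence $(q_n/q_k)^{\frac{\alpha}{p}}\le r^{-(k-n)\frac{\alpha}{p}}$, and summing the geometric series gives
$$\sum_{k>n}\Bigg(\frac{q_n^{\frac{1}{p}}q_k^{\frac{1}{p'}}}{\frac{q_n}{p}+\frac{q_k}{p'}}\Bigg)^{\alpha}\le p'^{\alpha}\sum_{j\ge1}r^{-j\frac{\alpha}{p}}=\frac{p'^{\alpha}}{r^{\frac{\alpha}{p}}-1}\cdot$$
Symmetrically, for $k<n$ the term $\frac{q_n}{p}$ dominates; discarding $\frac{q_k}{p'}\ge0$ gives $\frac{q_n^{1/p}q_k^{1/p'}}{\frac{q_n}{p}+\frac{q_k}{p'}}\le p\big(q_k/q_n\big)^{\frac{1}{p'}}$, and lacunarity in the form $q_n\ge r^{n-k}q_k$ produces geometric decay with ratio $r^{-\frac{\alpha}{p'}}$, so that $\sum_{k<n}(\cdots)^{\alpha}\le p^{\alpha}/\big(r^{\frac{\alpha}{p'}}-1\big)$. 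Adding the two estimates bounds the full sum by $\frac{p'^{\alpha}}{r^{\alpha/p}-1}+\frac{p^{\alpha}}{r^{\alpha/p'}-1}$ uniformly in $n$, whence the supremum is controlled.

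Since every step relies only on the trivial observation that a sum of nonnegative terms exceeds either one of them, together with the geometric growth supplied by lacunarity, there is no genuine obstacle. The only point requiring care is the bookkeeping of which term of the denominator to drop in each regime -- discarding $\frac{q_n}{p}$ when $k>n$ and $\frac{q_k}{p'}$ when $k<n$ -- and it is precisely this asymmetric choice that separates the final bound into the two summands $p'^{\alpha}/(r^{\alpha/p}-1)$ and $p^{\alpha}/(r^{\alpha/p'}-1)$.
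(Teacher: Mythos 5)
Your proof is correct and is essentially identical to the paper's own argument: in both, one bounds each term by dropping the dominated part of the denominator (keeping $\frac{q_k}{p'}$ when $k>n$ and $\frac{q_n}{p}$ when $k<n$) to get $p'(q_n/q_k)^{1/p}$ resp. $p(q_k/q_n)^{1/p'}$, and then sums the resulting geometric series supplied by $r$-lacunarity. Nothing further is needed.
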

\begin{proof}
Let $n\in\mathbb{N}$. 
For $k< n$, we have $\displaystyle\frac{q_n^{\frac{1}{p}}q_k^{\frac{1}{p'}}}{\displaystyle\frac{q_n}{p}+\frac{q_k}{p'}}\leq p\Big(\frac{q_k}{q_n}\Big)^{\frac{1}{p'}}\leq pr^{-\frac{n-k}{p'}}\cdot$ We obtain:
$$\sum\limits_{k=0}^{n-1}\Big(\displaystyle\frac{q_n^{\frac{1}{p}}q_k^{\frac{1}{p'}}}{\displaystyle\frac{q_n}{p}+\frac{q_k}{p'}}\Big)^{\alpha}\leq p^\alpha\sum\limits_{k=0}^{n-1}\frac{1}{r^{\frac{(n-k)\alpha}{p'}}}\leq \frac{p^\alpha}{r^{\frac{\alpha}{p'}}-1}\cdot$$
When $k>n$, we have $\displaystyle\frac{q_n^{\frac{1}{p}}q_k^{\frac{1}{p'}}}{\displaystyle\frac{q_n}{p}+\frac{q_k}{p'}}\leq p'\Big(\frac{q_n}{q_k}\Big)^{\frac{1}{p}}\leq p'r^{-\frac{k-n}{p}}$
and, summing  over the $k$'s, we obtain the majorization.
\end{proof}

For $p\in[1,+\infty)$ we consider the sequence $D_n(p)$ defined in section 2: 
$$D_n(p)=\Bigg(\int_0^1 (p\lambda_n+1)^{\frac{1}{p}} t^{\lambda_n}\Big(\sum\limits_k (p\lambda_k+1)^{\frac{1}{p}} t^{\lambda_k}\Big)^{p-1}dt\Bigg)^{\frac{1}{p}}~~.$$

\begin{prop}\label{prop borne p>2}
Let  $p\geq 2$ and $\Lambda$ be a (lacunary) sequence such that $(p\lambda_n+1)_n$ is $r$-lacunary. Then we have:
$$\|J_{\Lambda}\|_p\leq \Bigg( 1+\frac{2p^{\frac{1}{p-1}}}{r^{\frac{1}{p(p-1)}}-1}\Bigg)^{\frac{1}{p'}}~~.$$
\end{prop}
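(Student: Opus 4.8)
The plan is to bound $\|J_\Lambda\|_p$ through the quantities $D_n(p)$, since $J_\Lambda = T_m$ with $\mu=m$ the Lebesgue measure and weights $\omega_n=(p\lambda_n+1)^{-1}$. By Proposition~\ref{prop an(T) Dn}$(ii)$ it suffices to show that
$$\sup_n D_n(p) \le \Big(1 + \frac{2p^{1/(p-1)}}{r^{1/(p(p-1))}-1}\Big)^{1/p'},$$
or equivalently, writing $q_n = p\lambda_n+1$ and using $p'=p/(p-1)$, that $D_n(p)^{p'}\le 1 + \frac{2p^{1/(p-1)}}{r^{1/(p(p-1))}-1}$ for every $n$.

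The crucial step, and the only place where the hypothesis $p\ge 2$ is genuinely needed, is to interpret the $(p-1)$-th power in $D_n(p)$ as an $L^{p-1}$-norm. Set $S(t)=\sum_k q_k^{1/p} t^{\lambda_k}$, so that $D_n(p)^p = q_n^{1/p}\int_0^1 t^{\lambda_n} S(t)^{p-1}\,dt$. Raising to the power $1/(p-1)$ and factoring $t^{\lambda_n}=(t^{\lambda_n/(p-1)})^{p-1}$, I would write
$$D_n(p)^{p'} = q_n^{\frac{1}{p(p-1)}}\Big\| t^{\lambda_n/(p-1)} S\Big\|_{L^{p-1}(dt)}.$$
Since $p-1\ge 1$, $\|\cdot\|_{L^{p-1}}$ is a genuine norm, and Minkowski's inequality applied to the nonnegative terms of $t^{\lambda_n/(p-1)}S(t)=\sum_k q_k^{1/p} t^{\lambda_n/(p-1)+\lambda_k}$ yields
$$D_n(p)^{p'} \le q_n^{\frac{1}{p(p-1)}} \sum_k q_k^{1/p}\Big(\int_0^1 t^{\lambda_n+(p-1)\lambda_k}\,dt\Big)^{\frac{1}{p-1}} = \sum_k \frac{q_n^{\frac{1}{p(p-1)}}\,q_k^{1/p}}{\big(\lambda_n+(p-1)\lambda_k+1\big)^{\frac{1}{p-1}}}.$$

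To finish, I would use the elementary identity $\lambda_n+(p-1)\lambda_k+1 = \frac{q_n}{p}+\frac{q_k}{p'}$ (which follows from $\lambda_n=(q_n-1)/p$ and $p/p'=p-1$) together with $q_k^{1/p}=q_k^{1/(p'(p-1))}$, so that each summand becomes exactly $\big(q_n^{1/p}q_k^{1/p'}/(\frac{q_n}{p}+\frac{q_k}{p'})\big)^{1/(p-1)}$, i.e.\ the summand of Lemma~\ref{lem lac p p'} with $\alpha=1/(p-1)$. The diagonal term $k=n$ equals $1$, and Lemma~\ref{lem lac p p'} bounds the remaining sum by $\frac{p'^{1/(p-1)}}{r^{1/(p(p-1))}-1}+\frac{p^{1/(p-1)}}{r^{1/p}-1}$. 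The last simplification uses $p\ge 2$ once more: then $p'\le p$ gives $p'^{1/(p-1)}\le p^{1/(p-1)}$, while $1/p\ge 1/(p(p-1))$ gives $r^{1/p}-1\ge r^{1/(p(p-1))}-1$; hence both fractions are at most $\frac{p^{1/(p-1)}}{r^{1/(p(p-1))}-1}$, and together with the diagonal contribution this yields the claimed bound on $D_n(p)^{p'}$. The main obstacle is recognizing that the right way to linearize the $(p-1)$-th power is via Minkowski in $L^{p-1}$ rather than by expanding the power directly; this is precisely what forces $p\ge 2$ and what makes the lacunary sum of Lemma~\ref{lem lac p p'} appear with the correct exponent $\alpha=1/(p-1)$, the rest being routine algebra in the variable $q_n=p\lambda_n+1$.
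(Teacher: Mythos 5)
Your proof is correct and follows essentially the same route as the paper: the paper writes $D_n(p)^{p'}$ as the norm $\bigl\|\sum_k f_k\bigr\|_{L^{p-1}(f_n\,dt)}$ and applies the triangle inequality there, which after the identity $\lambda_n+(p-1)\lambda_k+1=\frac{q_n}{p}+\frac{q_k}{p'}$ gives exactly your sum, then invokes Lemma~\ref{lem lac p p'} with $\alpha=\frac{1}{p-1}$ and the same simplifications $p'\le p$, $\frac{1}{p}\ge\frac{1}{p(p-1)}$. Your only deviation is cosmetic: you distribute the weight $t^{\lambda_n}$ into the integrand and work in $L^{p-1}(dt)$ rather than in the weighted space $L^{p-1}(f_n\,dt)$, which yields the identical estimate.
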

\begin{proof}
For $j\in\mathbb{N}$, we denote $q_j=(p\lambda_j+1)$ and $f_j(t)=q_j^{\frac{1}{p}}t^{\lambda_j}=\displaystyle\frac{t^{\lambda_j}}{\|t^{\lambda_j}\|_p}\cdot$
We have:
\begin{align*}
D_n(p)^p=&\int_{0}^1f_n\Big(\sum\limits_k f_k\Big)^{p-1}dt=\Big\|\sum\limits_k f_k\Big\|_{L^{p-1}(f_ndt)}^{p-1}~~.
\end{align*}
Since $p-1\geq 1$, the triangle inequality gives:
\begin{align*}
D_n(p)^{p'}\leq \sum\limits_k \|f_k\|_{L^{p-1}(f_ndt)}
=\sum\limits_k\Big(  q_n^{\frac{1}{p}}q_k^{\frac{1}{p'}}\int_{0}^1t^{\lambda_n+(p-1)\lambda_k} dt \Big)^{\frac{1}{p-1}}.
\end{align*}
 For $n,k\in\mathbb{N}$, we have :
$$q_n^{\frac{1}{p}}q_k^{\frac{1}{p'}}\int_{0}^1t^{\lambda_n+(p-1)\lambda_k}dt=\frac{q_n^{\frac{1}{p}}q_k^{\frac{1}{p'}}}{\lambda_n+(p-1)\lambda_k+1}=\frac{q_n^{\frac{1}{p}}q_k^{\frac{1}{p'}}}{\displaystyle\frac{q_n}{p}+\frac{q_k}{p'}}\cdot$$
We apply Lemma~\ref{lem lac p p'} and we obtain for any $ n\in\mathbb{N}$:
$$D_n(p)^{p'}\leq \sum\limits_{k\in\mathbb{N}}\Big(\frac{q_n^{\frac{1}{p}}q_k^{\frac{1}{p'}}}{\displaystyle\frac{q_n}{p}+\frac{q_k}{p'}}\Big)^{\frac{1}{p-1}}\leq 1+\frac{2p^{\frac{1}{p-1}}}{r^{\frac{1}{p(p-1)}}-1}$$
since $p\geq p'$ and using that the term for $n=k$ is 1.
Thanks to  Prop.\ref{prop w mu Dn}, we have $$\|J_\Lambda\|_p=\|T_m\|_p\leq \sup\limits_nD_n(p).$$
\end{proof}

\begin{rem}
For $p\in(1,2)$, we can apply the same method and it would lead to:
$$\|J_\Lambda\|_p\leq \Bigg(1+\frac{2p'}{r^{\frac{1}{p'}}-1}\Bigg)^{\frac{1}{p}}~~.$$

But this bound is not sharp when $p$ is close to 1. For instance, it tends to $+\infty$ when $p\rightarrow 1$ and $r$ is fixed. But $\|J_\Lambda\|_1$ is always 1, without any assumption on $\Lambda$.\end{rem}

Point out that the operators $J_\Lambda:\ell^p(\omega)\to M_\Lambda^p\subset L^p(m)$ are not defined on the same scale of $L^p$-spaces, since the weight $\omega$ actually depends on $p$. We cannot apply directly Riesz-Thorin theorem for this problem, even not the weighted versions of the literature. Nevertheless, we shall adapt the proof in the next result and it gives the expected bound.

\begin{prop}\label{prop interpolation p<2}
Let $p\in[1,2]$ and let $\Lambda$ be a (lacunary) sequence such that $(p\lambda_n+1)_n$ is $r$-lacunary. Then we have:$$\|J_\Lambda\|_p\leq \Bigg(1+\frac{4}{r^{\frac{1}{2}}-1}\Bigg)^{\frac{1}{p'}}\cdot$$
\end{prop}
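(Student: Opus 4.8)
The plan is to obtain the estimate by complex interpolation between the exponents $p_0=1$ and $p_1=2$, adapting the proof of the Riesz--Thorin theorem through Hadamard's three-lines lemma rather than quoting it as a black box. Writing $\tfrac1p=(1-\theta)+\tfrac\theta2$, that is $\theta=\tfrac2{p'}$, the target $\big(1+\tfrac4{r^{1/2}-1}\big)^{1/p'}$ is exactly the geometric mean $M_0^{1-\theta}M_1^{\theta}$ of the two endpoint constants $M_0=\|J_\Lambda\|_1$ and $M_1=\|J_\Lambda\|_2$, since $M_1^{\theta}=\big[(1+\tfrac4{r^{1/2}-1})^{1/2}\big]^{2/p'}=(1+\tfrac4{r^{1/2}-1})^{1/p'}$. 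So everything reduces to pinning down the two endpoints and running a three-lines argument whose boundary bounds are governed by them.

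First I would record the endpoints. At $p_1=2$, Proposition~\ref{prop borne p>2} applied with $p=2$ gives $\|J_\Lambda\|_2\le\big(1+\tfrac4{r^{1/2}-1}\big)^{1/2}$, \emph{provided} $(2\lambda_n+1)_n$ is $r$-lacunary. This is granted by the hypothesis: the map $c\mapsto\tfrac{c\lambda_{n+1}+1}{c\lambda_n+1}$ is increasing (its derivative equals $\tfrac{\lambda_{n+1}-\lambda_n}{(c\lambda_n+1)^2}>0$), so from $p\le 2$ we get $\tfrac{2\lambda_{n+1}+1}{2\lambda_n+1}\ge\tfrac{p\lambda_{n+1}+1}{p\lambda_n+1}\ge r$. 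At $p_0=1$ one has $\|J_\Lambda\|_1=1$ with no assumption on $\Lambda$, since the triangle inequality yields $\|\sum_n b_nt^{\lambda_n}\|_1\le\sum_n|b_n|\,\|t^{\lambda_n}\|_1=\|b\|_{\ell^1(\omega)}$, and a single monomial shows this is sharp.

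For the interpolation itself I would fix $b\in c_{00}$ with $\|b\|_{\ell^p(\omega)}=1$ and a simple $g$ with $\|g\|_{p'}=1$, and build analytic families $B_n(z)$, $G(z,\cdot)$ on the strip $0\le\mathrm{Re}\,z\le1$ so that the scalar function $F(z)=\int_0^1\big(\sum_n B_n(z)t^{\lambda_n}\big)\overline{G(z,t)}\,dt$ satisfies $F(\theta)=\langle J_\Lambda b,g\rangle$, is bounded and analytic on the strip, and on the two boundary lines is controlled by $\|J_\Lambda\|_1\cdot\|B(z)\|_{\ell^1(\omega^{(1)})}\|G(z)\|_\infty$ (on $\mathrm{Re}\,z=0$) and by $\|J_\Lambda\|_2\cdot\|B(z)\|_{\ell^2(\omega^{(2)})}\|G(z)\|_2$ (on $\mathrm{Re}\,z=1$), where $\omega^{(q)}_n=(q\lambda_n+1)^{-1}$. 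Hadamard's three-lines lemma would then give $|F(\theta)|\le M_0^{1-\theta}M_1^{\theta}$, and taking the supremum over $b,g$ yields the claim.

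The main obstacle, and the precise reason Riesz--Thorin cannot be invoked verbatim, is that the weight $\omega^{(p)}_n=(p\lambda_n+1)^{-1}$ genuinely depends on $p$: the three spaces $\ell^1(\omega^{(1)})$, $\ell^p(\omega^{(p)})$, $\ell^2(\omega^{(2)})$ do not sit in one interpolation scale, and even the weighted (Stein--Weiss) theorem fails, because its interpolated weight is not $\omega^{(p)}$. Concretely, the naive analytic continuation of the normalizing factor, $(p(z)\lambda_n+1)^{1/p(z)}$ with $\tfrac1{p(z)}=1-\tfrac z2$, is \emph{unbounded} on the strip (its modulus blows up as $\mathrm{Im}\,z\to\infty$), so three-lines cannot be applied to it directly; on the other hand a plain geometric interpolation of the endpoint normalizations undershoots, since $(p\lambda_n+1)^{1/p}$ exceeds $(\lambda_n+1)^{1-\theta}(2\lambda_n+1)^{\theta/2}$ in general. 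The delicate point is therefore to interpolate the scalar factors $(p\lambda_n+1)$ by hand, choosing the continuation of $B_n(z)$ so that the two boundary sequence-norms stay under control while $B_n(\theta)=b_n$, and to check that the resulting boundary estimates close up to the clean constant $M_1^{\theta}$ without loss. I expect reconciling the three mismatched weights in the boundary estimates to be the crux of the argument.
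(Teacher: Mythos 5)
Your setup is on target — the identification $\theta=\tfrac{2}{p'}$, the endpoint bounds ($\|J_\Lambda\|_1=1$, and $\|J_\Lambda\|_2\le(1+\tfrac{4}{\sqrt r-1})^{1/2}$ via your correct monotonicity observation that $(2\lambda_n+1)_n$ is also $r$-lacunary), and the diagnosis that the $p$-dependent weights are the obstruction all agree with the paper, which indeed says Riesz--Thorin cannot be applied even in weighted form. But the step you defer (``interpolate the scalar factors $(p\lambda_n+1)$ by hand \ldots without loss'') is not a technical detail to be checked: in the framework you set up, with the monomials $t^{\lambda_n}$ \emph{fixed} and only the coefficients $B_n(z)$ deformed, it is provably impossible. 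Test a single coordinate: to represent the unit vector $b_n=(p\lambda_n+1)^{1/p}$ at $z=\theta$ while keeping $\|B(ix)\|_{\ell^1(\omega^{(1)})}\le1$ and $\|B(1+ix)\|_{\ell^2(\omega^{(2)})}\le1$, you need a bounded analytic scalar $B_n$ on the strip with $|B_n(ix)|\le \lambda_n+1$, $|B_n(1+ix)|\le(2\lambda_n+1)^{1/2}$ and $|B_n(\theta)|=(p\lambda_n+1)^{1/p}$. The three-lines lemma applied to $B_n$ itself forces $|B_n(\theta)|\le(\lambda_n+1)^{1-\theta}(2\lambda_n+1)^{\theta/2}$, and the ratio $(p\lambda_n+1)^{1/p}\big/\big[(\lambda_n+1)^{1-\theta}(2\lambda_n+1)^{\theta/2}\big]$ tends to $p^{1/p}2^{-1/p'}>1$ as $\lambda_n\to\infty$ when $p\in(1,2)$. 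So the ``undershoot'' you noticed is a hard obstruction, not a computation to be tightened: no choice of coefficient family closes the argument with the clean constant, only with an extra loss factor.

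The paper's ``additional trick'' is precisely the idea your plan lacks: the analytic deformation acts on the \emph{exponents} of the monomials and on the test function, not on the weights. With $\tfrac{1}{p(z)}=1-\tfrac z2$, for $a\ge0$ normalized in $\ell^p(\omega)$ and $g\ge0$ normalized in $L^{p'}$, it sets $F(z)=\sum_n a_n^{p/p(z)}\int_0^1 t^{(p/p(z))\lambda_n}\,g(t)^{p'/p'(z)}dt$. On $\mathrm{Re}\,z=0$ the integral itself produces the right weight, $|F(ix)|\le\sum_n a_n^p\int_0^1t^{p\lambda_n}dt=\sum_n a_n^p(p\lambda_n+1)^{-1}=1$, so neither $\|J_\Lambda\|_1$ nor $\omega^{(1)}$ ever enters; on $\mathrm{Re}\,z=1$ the inner sum is a M\"untz polynomial in the \emph{rescaled} sequence $\Psi=(\tfrac p2\lambda_n)_n$, and since $(2\psi_n+1)_n=(p\lambda_n+1)_n$ is $r$-lacunary by hypothesis, Prop.\ref{prop borne p>2} with exponent $2$ applied to $J_\Psi$, plus Cauchy--Schwarz against $g^{p'/2}$, gives $|F(1+ix)|\le(1+\tfrac{4}{\sqrt r-1})^{1/2}$ — again because $\sum_n|a_n^{p/2}|^2(2\psi_n+1)^{-1}=\sum_n a_n^p(p\lambda_n+1)^{-1}=1$. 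Thus all three mismatched weights collapse to the single weight $(p\lambda_n+1)^{-1}$ on both boundary lines (note the right endpoint is not $\|J_\Lambda\|_2$ but the Hilbertian bound for $J_\Psi$), and the three-lines theorem then yields the stated constant. Interpolating the sequence $\Lambda$ itself, rather than the weights, is the missing key idea.
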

\begin{proof}
Our proof is adapted from the classical proof of Riesz-Thorin theorem, with an additional trick. 

Let $\theta=\displaystyle\frac{2}{p'}\in(0,1)$. We have $\displaystyle\frac{1}{p}=1-\frac{\theta}{2}\cdot$ As usual, for $z\in\mathbb{C}$ such that $0\leq Re(z)\leq 1,$ we define $\displaystyle\frac{1}{p(z)}=1-\frac{z}{2}$ and $\displaystyle\frac{1}{p'(z)}=\frac{z}{2}\cdot$ We have $p(\theta)=p$ and $p'(\theta)=p'$. We fix $a=(a_n)_n$ a sequence in $\mathbb{R}_+$ with a finite number of non-zero terms and  $g\in L^{p'}$ positive, such that 
$\|a\|_{\ell^p(\omega)}=\|g\|_{p'}=1.$ 
Finally we define $$F(z)=\sum_{n\in\mathbb{N}}a_n^{\frac{p}{p(z)}}\int_0^1t^{\frac{p}{p(z)}\lambda_n} g(t)^{\frac{p'}{p'(z)}}dt\;.$$
Point out that we actually have a finite sum, and $F$ is an holomorphic function on the band $\dis\{z\in\mathbb{C}|\; Re(z)\in(0,1)\}$.
For $x\in\mathbb{R},$ we have 
$$\displaystyle |F(ix)|\leq \sum\limits_{n\in\mathbb{N}}a_n^p\int_0^1t^{p\lambda_n}dt=\sum\limits_{n\in\mathbb{N}}\frac{a_n^p}{p\lambda_n+1}=1\;.$$ 

On the other hand, for every real number $x$:
\begin{align*}
|F(1+ix)|&\leq \sum\limits_{n\in\mathbb{N}} a_n^{p(1-\frac{1}{2})}\int_0^1t^{p(1-\frac{1}{2})\lambda_n}g(t)^{\frac{p'}{2}}dt\\
&=\int_0^1 g(t)^{\frac{p'}{2}}\sum\limits_{n\in\mathbb{N}}b_nt^{\psi_n}dt
\end{align*}
where $b_n=a_n^{\frac{p}{2}}$ and $\Psi=(\psi_n)_n=\Big(\displaystyle\frac{p\lambda_n}{2}\Big)_n$. Since $(2\psi_n+1)_n$ is also $r$-lacunary we can apply Prop.\ref{prop borne p>2}. in the hilbertian case:
$$\Big\|\sum\limits_{n\in\mathbb{N}}b_nt^{\psi_n}\Big\|_{2}=\|J_\Psi (b)\|_{2}\leq \Bigg(1+\frac{4}{r^{\frac{1}{2}}-1}\Bigg)^{\frac{1}{2}}\Bigg(\sum\limits_n\frac{|b_n|^2}{2\psi_n+1}\Bigg)^{\frac{1}{2}} \cdot$$
Since $\displaystyle\frac{1}{2\psi_n+1}=\frac{1}{p\lambda_n+1}$ and $|b_n|^{2}=|a_n|^p$, we have $$\sum\limits_n\frac{|b_n|^2}{2\psi_n+1}=\sum\limits_n\frac{|a_n|^p}{p\lambda_n+1}=1~~\cdot$$
We apply the Cauchy-Schwarz inequality and get:
$$|F(1+ix)|\leq \|g^{\frac{p'}{2}}\|_{2}\times \Big\|\sum\limits_{n}b_nt^{\psi_n}\Big\|_{2}\leq \Big(1+\frac{4}{r^{\frac{1}{2}}-1}\Big)^{\frac{1}{2}}~~.$$

Now, the proof finishes in a standard way and the three lines theorem gives
$$|F(\theta)|\le\Bigg(1+\frac{4}{r^{\frac{1}{2}}-1}\Bigg)^{\frac{\theta}{2}}\;.$$
From this, we conclude easily that for arbitrary $a\in\ell^p(\omega)$, we have
$$\|J_\Lambda(a)\|_{p}\leq \Big(1+\frac{4}{r^{\frac{1}{2}}-1}\Big)^{\frac{1}{p'}}\|a\|_{\ell^p(\omega)}~~.$$
\end{proof}

Now we can give a characterization of the boundedness of $J_\Lambda.$

\begin{thm}\label{thm quasi lac}
Let $p\in(1,+\infty)$.
The following are equivalent:
\begin{enumerate}[(i)]
\item  The sequence $\Lambda$ is quasi-lacunary ;
\item  The operator $J_\Lambda$ is bounded on $\ell^p(\omega)$.
\end{enumerate}
\end{thm}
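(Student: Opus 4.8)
The plan is to prove the two implications separately, treating $(i)\Rightarrow(ii)$ as an easy consequence of the lacunary estimates already established, and concentrating the real work on the converse $(ii)\Rightarrow(i)$, which I would obtain by contraposition through a ``clustering'' reformulation of quasi-lacunarity.

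For $(i)\Rightarrow(ii)$: by Remark~\ref{rem qlac fini} a quasi-lacunary $\Lambda$ splits into finitely many lacunary subsequences $\Lambda^{(1)},\dots,\Lambda^{(L)}$, associated to a partition of $\mathbb{N}$ into index sets $A_1,\dots,A_L$. Each $\Lambda^{(j)}$ being lacunary, the sequence $(p\lambda_n+1)_{n\in A_j}$ is again lacunary (the map $x\mapsto (rx+1)/(x+1)$ is increasing, so the ratio stays bounded below by a constant $>1$), so Prop.~\ref{prop borne p>2} for $p\ge2$ together with Prop.~\ref{prop interpolation p<2} for $1<p\le2$ bounds each $\|J_{\Lambda^{(j)}}\|_p$ by a finite constant $C_j$. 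Writing $b=\sum_j b^{(j)}$ with $b^{(j)}$ supported on $A_j$, the triangle inequality gives $\|J_\Lambda b\|_p\le\sum_j\|J_{\Lambda^{(j)}}b^{(j)}\|_p\le(\max_j C_j)\sum_j\|b^{(j)}\|_{\ell^p(\omega)}$, and since the supports are disjoint, Hölder's inequality on the $L$-term sum yields $\sum_j\|b^{(j)}\|_{\ell^p(\omega)}\le L^{1/p'}\|b\|_{\ell^p(\omega)}$. Hence $J_\Lambda$ is bounded.

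For $(ii)\Rightarrow(i)$ I would argue the contrapositive, and the key preliminary step---the part I expect to be the main obstacle---is the equivalent reformulation: $\Lambda$ is quasi-lacunary if and only if $\sup_{x>0}\#\{n:\lambda_n\in[x,2x)\}<+\infty$. The forward direction is immediate (an $r$-lacunary sequence meets $[x,2x)$ in at most $\log2/\log r+1$ points, so a finite union meets it in boundedly many). For the converse I would build the extraction greedily, setting $n_{k+1}=\min\{n:\lambda_n\ge 2\lambda_{n_k}\}$, which makes $(\lambda_{n_k})_k$ lacunary with ratio $2$, while the bound on cluster sizes forces $n_{k+1}-n_k$ to stay bounded, i.e. exactly quasi-lacunarity in the sense of Definition~\ref{def lac}.

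Granting this, if $\Lambda$ is not quasi-lacunary then for every $K$ there is $x=x_K$ and a set $C$ of exactly $K$ indices with $\lambda_n\in[x,2x)$. Testing $J_\Lambda$ on $b=\sum_{n\in C}e_n$, I would bound the source norm by $\|b\|_{\ell^p(\omega)}^p=\sum_{n\in C}(p\lambda_n+1)^{-1}\le K/(px)$, and estimate the image from below on $I=\{t:t^{2x}\ge 1/2\}$, an interval of length $\approx(\log2)/(2x)$ on which $\sum_{n\in C}t^{\lambda_n}\ge K/2$. This gives $\|J_\Lambda b\|_p^p\gtrsim x^{-1}(K/2)^p$, hence $\|J_\Lambda b\|_p^p/\|b\|_{\ell^p(\omega)}^p\gtrsim K^{p-1}$, so $\|J_\Lambda\|_p\gtrsim K^{1/p'}\to+\infty$. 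Since $p>1$ this forces $J_\Lambda$ to be unbounded, completing the contrapositive. This last estimate is precisely where $p>1$ enters: for $p=1$ the exponent $1/p'$ vanishes, consistent with $\|J_\Lambda\|_1=1$ holding for every $\Lambda$.
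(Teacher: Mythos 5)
Your proposal is correct, and the forward implication $(i)\Rightarrow(ii)$ coincides with the paper's proof (the paper writes $J_\Lambda=\sum_{j=1}^K J^{(j)}$ with $J^{(j)}$ acting on the coordinates in $\Lambda_j$ and invokes Prop.\ref{prop borne p>2} and Prop.\ref{prop interpolation p<2}; your disjoint-support decomposition with the H\"older factor $L^{1/p'}$ is the same argument with an explicit constant). For the converse, however, you take a genuinely different route. The paper argues directly: if $\Lambda$ is not quasi-lacunary, then for each $N$ the extraction $(Nk)_k$ has bounded gaps, so $(q_{Nk})_k$ with $q_k=p\lambda_k+1$ is not lacunary, which produces a block $A=\{n_0,\dots,n_0+N-1\}$ with $q_{n_0+N}\le 2q_{n_0}$; testing on $\ind_A$, the inequality of arithmetic and geometric means gives $\|J_\Lambda(\ind_A)\|_p^p\ge N^p/(2q_{n_0})$ against $\|\ind_A\|_{\ell^p(\omega)}^p\le N/q_{n_0}$. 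You instead first prove the combinatorial characterization that quasi-lacunarity is equivalent to $\sup_{x>0}\#\{n:\lambda_n\in[x,2x)\}<+\infty$ (via the greedy extraction $n_{k+1}=\min\{n:\lambda_n\ge 2\lambda_{n_k}\}$), and then test on a dyadic cluster, bounding the image from below pointwise by $K/2$ on the interval $\{t:t^{2x}\ge 1/2\}$. Both arguments produce the same growth $\|J_\Lambda\|_p\gtrsim N^{1/p'}$, and both degenerate at $p=1$ for the same reason. Your cluster lemma is a nice standalone reformulation (essentially a quantitative version of Remark~\ref{rem qlac fini}) and your testing estimate is more geometric; the paper's AM--GM trick is shorter in that it extracts the bad block directly from the failure of lacunarity along arithmetic extractions, with no auxiliary characterization needed. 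One small point to make airtight in your version: the length estimate $|I|\approx(\log 2)/(2x)$ requires $x$ bounded away from $0$, but this is automatic for large $K$, since the gap condition $\inf_n(\lambda_{n+1}-\lambda_n)=\delta>0$ forces $\#\{n:\lambda_n\in[x,2x)\}\le x/\delta+1$, hence $x_K\ge\delta(K-1)\to+\infty$.
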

\begin{proof}
Assume that $\Lambda$ is a quasi-lacunary sequence. Using Remark~\ref{rem qlac fini}, there exist $K\ge1$ and lacunary sets $\Lambda_j\subset\Lambda$ (with $j\in\{1,\cdots,K\}$) such that $\Lambda=\Lambda_1\cup\cdots\cup\Lambda_K$. 
We define the operators $$J^{(j)}:\left\lbrace\begin{array}{ccc}
\ell^p(\omega)&\longrightarrow &M_\Lambda^p \\
b&\longmapsto &\sum\limits_nb_nt^{\lambda_n}\ind_{\Lambda_j}(\lambda_n)
\end{array}\right.$$
where $\ind_{\Lambda_j}$ is the indicator function of the set $\Lambda_j$.

We have $J_\Lambda =\sum\limits_{j=1}^K J^{(j)}$. Moreover, for any $j$, the norm $\|J^{(j)}\|_p=\|J_{\Lambda_j}\|_p<+\infty$ thanks to Prop.\ref{prop interpolation p<2} and Prop.\ref{prop borne p>2}. Therefore, $J_{\Lambda}$ is bounded.

For the converse, we assume that $\Lambda$ is not quasi-lacunary. We denote $q_k=(p\lambda_k+1)$. For an arbitrarily large $N\in\mathbb{N}$ we consider the extraction $(Nk)_{k\in\mathbb{N}}$. It has bounded gaps, so the sequence $q_{Nk}$ is not lacunary. This implies $\liminf\limits_{k\rightarrow+\infty}\displaystyle\frac{q_{(k+1)N}}{q_{kN}}=1$, so there exists $k_0$ such that it is less than 2. For $n_0=k_0N$ we have $$q_{n_0+N}\leq 2q_{n_0}.$$

Let $A=\lbrace n_0,\ldots ,n_0+N-1 \rbrace$. Thanks to the inequality of arithmetic and geometric means, we have:
\begin{align*} 
\|J_\Lambda(\ind_A)\|_{p}^p=\displaystyle\int_0^1\Big|\sum\limits_{j\in A} t^{\lambda_j}\Big|^pdt
\geq \displaystyle\int_0^1 N^p\prod\limits_{j\in A}t^{\frac{p\lambda_j}{N}}dt.
\end{align*}
We obtain $$\|J_\Lambda(\ind_A)\|_{p}^p\geq \frac{N^p}{\sum\limits_{j\in A}\frac{q_j}{N}}\geq \frac{N^p}{q_{n_0+N}}\geq \frac{N^p}{2q_{n_0}}\cdot$$
On the other hand, $\|\ind_A\|_{\ell^p(\omega)}^p=\displaystyle\sum\limits_{j\in A}\frac{1}{q_j}\leq \frac{N}{q_{n_0}}\cdot$
Since $N$ is arbitrarily large and $p>1$, $J_\Lambda$ is not bounded.
\end{proof}

The following is a refinement of the Gurariy-Macaev theorem for the lacunary sequences with a large ratio.

\begin{thm}\label{thm r epsilon}
Let $p>1$. For any $\varepsilon\in(0,1)$, there exists $r_\varepsilon>1$ with the following property:

For any $\Lambda$ such that $(p\lambda_n+1)_n$ is $r_\varepsilon$-lacunary, we have:
$$\forall a\in\ell^p(\omega),\qquad(1-\varepsilon)\|a\|_{\ell^p(\omega)}\leq \|J_\Lambda(a)\|_{p}\leq (1+\varepsilon)\|a\|_{\ell^p(\omega)}~~.$$
\end{thm}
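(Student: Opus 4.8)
The plan is to reformulate the statement as an almost-isometric equivalence and then to establish it by disjointification. Writing $q_n=p\lambda_n+1$ and $f_n=q_n^{1/p}t^{\lambda_n}=t^{\lambda_n}/\|t^{\lambda_n}\|_p$, the substitution $b_n=a_n\omega_n^{1/p}$ turns the claim into
$$(1-\eps)\|b\|_{\ell^p}\le\Big\|\sum_n b_nf_n\Big\|_p\le(1+\eps)\|b\|_{\ell^p},$$
since $\|b\|_{\ell^p}=\|a\|_{\ell^p(\omega)}$ and $\sum_nb_nf_n=J_\Lambda(a)$. The upper inequality is immediate: by Prop.\ref{prop borne p>2} and Prop.\ref{prop interpolation p<2} the bound on $\|J_\Lambda\|_p$ tends to $1$ as the ratio grows, so it is $\le1+\eps$ as soon as $r_\eps$ is large enough. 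Everything therefore reduces to the lower estimate.

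For the lower bound I would approximate each $f_n$ by a function living at its own scale. Fix a large $c>0$ and set $J_0=[0,1-c/q_1)$ and $J_n=[1-c/q_n,\,1-c/q_{n+1})$ for $n\ge1$; since $(q_n)_n$ increases these intervals are pairwise disjoint. Put $\tilde f_n=f_n\ind_{J_n}$, so the $\tilde f_n$ have disjoint supports and hence $\|\sum_nb_n\tilde f_n\|_p^p=\sum_n|b_n|^p\|\tilde f_n\|_p^p$ exactly. Using $\int_a^bq_nt^{p\lambda_n}\,dt=b^{q_n}-a^{q_n}$ one computes $\|\tilde f_n\|_p^p=(1-c/q_{n+1})^{q_n}-(1-c/q_n)^{q_n}$, and the elementary bounds $(1-c/q_n)^{q_n}\le e^{-c}$ together with Bernoulli's inequality $(1-c/q_{n+1})^{q_n}\ge1-c/r$ (valid for $r>c$) give $\|\tilde f_n\|_p^p\ge1-c/r-e^{-c}$, i.e. $\|\tilde f_n\|_p\ge1-\eta$ uniformly in $n$, with $\eta=\eta(c,r)\to0$ when first $c\to\infty$ and then $r\to\infty$. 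Consequently $\|\sum_nb_n\tilde f_n\|_p\ge(1-\eta)\|b\|_{\ell^p}$.

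It then remains to control the error operator $E\colon b\mapsto\sum_nb_n(f_n-\tilde f_n)=\sum_nb_nf_n\ind_{J_n^c}$ and to prove $\|E\|_{\ell^p\to L^p}\le\eta'$ with $\eta'=\eta'(c,r)\to0$; the triangle inequality then yields $\|\sum_nb_nf_n\|_p\ge(1-\eta-\eta')\|b\|_{\ell^p}$, and choosing $c$ then $r_\eps$ finishes the proof (this also re-proves the upper bound with constant $1+\eta'$). To estimate $E$ I would run the Hölder argument of Prop.\ref{prop w mu Dn} on the system $(f_n\ind_{J_n^c})_n$, which bounds $\|Eb\|_p$ by $\big(\sum_n|b_n|^p\tilde D_n^p\big)^{1/p}$, where $\tilde D_n^p=\int_{J_n^c}f_n\big(\sum_kf_k\ind_{J_k^c}\big)^{p-1}dt$; majorizing the inner sum by $\sum_kf_k\lesssim(1-t)^{-1/p}$ via Lemma \ref{lem 1 sur 1-t} and Remark \ref{rem 1 sur 1-t} reduces everything to $\int_{J_n^c}q_n^{1/p}t^{\lambda_n}(1-t)^{-1/p'}dt$. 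The point is that $J_n^c$ omits precisely the band $[1-c/q_n,1-c/q_{n+1})$ on which this integrand peaks, so splitting $J_n^c$ into the far part $[0,1-c/q_n)$ and the near part $[1-c/q_{n+1},1)$ produces contributions of size $\lesssim c^{-1/p'}e^{-c/p}$ and $\lesssim c^{1/p}r^{-1/p}$ respectively, uniformly in $n$. Establishing this restricted estimate $\sup_n\tilde D_n\to0$ --- that removing the diagonal band turns the $O(1)$ bound of Prop.\ref{prop gurariy rapide} into a vanishing one --- is the heart of the argument and the step I expect to be the main obstacle. The only extra care needed is that the finitely many indices with $q_n\le c$ must be handled separately (for $r_\eps>c$ all but $n=0$ are already excluded), but they form a fixed finite-dimensional perturbation and do not affect the asymptotic constants.
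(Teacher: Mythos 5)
Your proposal is correct, but its lower bound takes a genuinely different route from the paper's. The paper argues by duality: for $b$ in the unit sphere of $\ell^{p'}(\omega)$ it pairs $J_\Lambda(a)$ with $J_\Psi(b)$, where $\psi_n=(p-1)\lambda_n$, isolates the diagonal term $\sum_n a_nb_n\omega_n$ (whose supremum over $b$ equals $1$), controls the off-diagonal terms via Young's inequality and Lemma~\ref{lem lac p p'} (error at most $\varepsilon/2$), and closes with H\"older together with the upper bound applied to $\|J_\Psi\|_{p'}$; this yields the explicit value $r_\varepsilon=\big(1+4q^{\frac{1}{q-1}}/\varepsilon\big)^{q(q-1)}$ with $q=\max\{p,p'\}$. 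You instead disjointify: slicing $[0,1)$ into the intervals $J_n$ makes $\big\|\sum_n b_n\tilde f_n\big\|_p^p=\sum_n|b_n|^p\|\tilde f_n\|_p^p$ exact, and your computations check out --- $\|\tilde f_n\|_p^p=(1-c/q_{n+1})^{q_n}-(1-c/q_n)^{q_n}\geq 1-c/r-e^{-c}$ for $r>c$ (the index $n=0$ causing no trouble since $q_1\geq rq_0>c$), the pointwise H\"older argument of Prop.~\ref{prop w mu Dn} does adapt verbatim to the truncated system because the inequality is pointwise, and after substituting $1-t=s/q_n$ the far part of $\tilde D_n^p$ is at most $\int_c^{\infty}e^{-s/(2p)}s^{-1/p'}ds\lesssim c^{-1/p'}e^{-c/(2p)}$ while the near part is at most $p\,c^{1/p}(q_n/q_{n+1})^{1/p}\leq p\,c^{1/p}r^{-1/p}$, so taking first $c$ then $r$ large does kill $\sup_n\tilde D_n$ as required. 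Amusingly, this is essentially the slicing strategy of Gurariy--Macaev from which the paper explicitly distances itself ("Our approach is different from the one of Gurariy--Macaev, which was based on some slicing of the interval $(0,1)$"); what the duality buys the authors is the clean explicit $r_\varepsilon$ above and the reuse of a single lemma for both bounds, while your slicing is more geometric and self-contained and re-derives the upper bound for free, at the cost of a less explicit $r_\varepsilon$. One point you must make explicit: Remark~\ref{rem 1 sur 1-t} is stated with constants depending on $\Lambda$, whereas your argument needs the majorization $\sum_k f_k\lesssim(1-t)^{-1/p}$ with a constant depending only on $p$ and $r$, uniformly over all $\Lambda$ with $(p\lambda_n+1)_n$ $r$-lacunary --- this is true (any $r$-lacunary sequence with $r\geq2$ embeds in a quasi-geometric one with ratios in $[2,4]$, say), but since the theorem asserts a single $r_\varepsilon$ valid for every such $\Lambda$, this uniformity needs to be stated and checked rather than quoted from the remark.
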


\begin{rem}
If we denote $q=\max\{p,p'\}$, the parameter $r_{\varepsilon}= \Big(1+\displaystyle\frac{4q^{\frac{1}{q-1}}}{\varepsilon}\Big)^{q(q-1)}$ is suitable for Th.\ref{thm r epsilon}.
\end{rem} 
\begin{proof}
Let $q=\max\{p,p'\}\geq 2$ and $r_{\varepsilon}= \displaystyle\Big(1+\displaystyle\frac{4q^{\frac{1}{q-1}}}{\varepsilon}\Big)^{q(q-1)}$. 

We fix a sequence $a\in\ell^p(\omega)$ with $\|a\|_{\ell^p(\omega)}=1$. Thanks to the choice of $r_\varepsilon$, when $p\geq 2$ we apply Prop.\ref{prop borne p>2} and we get that $\|J_\Lambda\|_p\leq \Big(1+\displaystyle\frac{\varepsilon}{2}\Big)^{\frac{1}{p'}}\leq 1+\frac{\varepsilon}{2}.$
When $p\leq 2$, Prop.\ref{prop interpolation p<2} gives also
$\displaystyle\|J_\Lambda\|_p\leq \Big(1+\frac{\varepsilon}{2})^{\frac{1}{p'}}\leq 1+\frac{\varepsilon}{2}\cdot$
In the two cases, the majorization part holds.

For the minoration part, we consider a sequence $b\in\ell^{p'}(\omega)$ such that $\|b\|_{\ell^{p'}(\omega)}=1.$ We define $\Psi=(\psi_n)_n$ by $\psi_n=\displaystyle\frac{p\lambda_n}{p'}=(p-1)\lambda_n.$ We have:
\begin{align*}
\|J_{\Lambda}(a) . J_{\Psi}(b)\|_{1}&=\int_0^1 \Big|\sum\limits_{n,k}a_nb_k t^{\lambda_n+(p-1)\lambda_k}\Big|dt\\
&\geq\Big|\sum\limits_{n=0}^{+\infty} \frac{a_nb_n}{p\lambda_n+1} \Big|-\sum_{\substack{n,k\in\mathbb{N}\\ k\not=n}}\frac{|a_n|.|b_k|}{\lambda_n+(p-1)\lambda_k+1}\cdot
\end{align*}
We introduce the sequence $(q_n)_n=(p\lambda_n+1)_n=(\omega_n^{-1})_n$. Since $\|a\|_{\ell^p(\omega)}=1$ and by duality we have $$\sup\Big\{\displaystyle\sum\limits_n\frac{a_nb_n}{p\lambda_n+1},\|b\|_{\ell^{p'}(\omega)}=1\Big\}=1.$$ We now majorize the second term.
For any $n,k$, Young's inequality gives: 
\begin{align*}
|a_nb_k|&=|a_n\omega_n^{\frac{1}{p}}b_k\omega_k^{\frac{1}{p'}}|\times q_n^{\frac{1}{p}}q_k^{\frac{1}{p'}}\\
&\leq \Big(\frac{1}{p}|a_n|^p\omega_n+\frac{1}{p'}|b_k|^{p'}\omega_k\Big)\times q_n^{\frac{1}{p}}q_k^{\frac{1}{p'}}~~.
\end{align*}
We sum over $n$ and $k$ we obtain:
\begin{align*}
\biindice{\sum}{n,k\in\mathbb{N}}{k\not=n}\frac{|a_n|.|b_k|}{\displaystyle\frac{q_n}{p}+\frac{q_k}{p'}}&\leq \frac{1}{p}\|a\|^p_{\ell^p(\omega)}\sup\limits_{n}\biindice{\sum}{k\in\mathbb{N}}{k\not=n}\frac{q_n^{\frac{1}{p}}q_k^{\frac{1}{p'}}}{\displaystyle\frac{q_n}{p}+\frac{q_k}{p'}}\\
&+\frac{1}{p'}\|b\|^{p'}_{\ell^{p'}(\omega)}\sup\limits_k\biindice{\sum}{n\in\mathbb{N}}{n\not=k}\frac{q_n^{\frac{1}{p}}q_k^{\frac{1}{p'}}}{\displaystyle\frac{q_n}{p}+\frac{q_k}{p'}}\cdot
\end{align*}
Applying Lemma~\ref{lem lac p p'}, this quantity is less than $\displaystyle\frac{2q}{r_\varepsilon^{\frac{1}{q}}-1}\leq \frac{\varepsilon}{2}$ thanks to the choice of $r_\varepsilon$ again.
On the other hand, Hölder's inequality gives
$$\|J_\Lambda (a).J_\Psi (b)\|_{L^1}\leq \|J_\Lambda (a)\|_{p}.\|J_\Psi (b)\|_{p'}\leq \Big(1+\frac{\varepsilon}{2}\Big)\|J_{\Lambda}(a)\|_{p}$$
because $(p'\psi_n+1)$ is also $r_\varepsilon$-lacunary, so we can apply the majorization part for  $\|J_\Psi\|_{p'}$.
Considering the upper bound over the sequences $b$, we finally obtain, for any $\Lambda$ at least $r_{\varepsilon}-$lacunary, and  for any $a$ in the unit sphere of $\ell^p(\omega)$,
$$ (1-\varepsilon)\leq \frac{1-\frac{1}{2}\varepsilon}{1+\frac{1}{2}\varepsilon}\leq\|J_{\Lambda}(a)\|_{p}\leq(1+\varepsilon)~~\cdot$$
\end{proof}

Before stating the next corollary, let us recall that a (normalized) sequence $(x_n)$ in a Banach space $X$ is asymptotically isometric to the canonical basis of $\ell^p$ if for every $\varepsilon\in(0,1)$, there exists an integer $N$ such that  
$$(1-\varepsilon)\Big(\sum\limits_{n\geq N}|a_n|^p\Big)^{\frac{1}{p}}\leq \Big\|\sum\limits_{n\geq N}a_nx_n\Big\|_X\leq (1+\varepsilon)\Big(\sum\limits_{n\geq N}|a_n|^p\Big)^{\frac{1}{p}}
$$
for any $a=(a_n)_n\in c_{00}$.

Equivalently there exists a null sequence $(\varepsilon_n)$ of positive numbers such that for every $N$, we have for any $a=(a_n)_n\in c_{00}$:
$$(1-\varepsilon_N)\Big(\sum\limits_{n\geq N}|a_n|^p\Big)^{\frac{1}{p}}\leq \Big\|\sum\limits_{n\geq N}a_nx_n\Big\|_{X}\leq (1+\varepsilon_N)\Big(\sum\limits_{n\geq N}|a_n|^p\Big)^{\frac{1}{p}}.
$$
When $p=2$, we can also say that such a sequence $(x_n)$ is asymptotically orthonormal.
\medskip

We can now prove
\begin{cor}\label{cor super lac}
Let $p\in(1,+\infty)$.
The following are equivalent:
\begin{itemize}
\item[$(i)$] $\Lambda$ is super-lacunary.
\item[$(ii)$] The sequence $\Big(\displaystyle\frac{t^{\lambda_n}}{\|t^{\lambda_n}\|_{p}}\Big)_n$ in $L^p$ is asymptotically isometric to the canonical basis of $\ell^p$.
\end{itemize}
\end{cor}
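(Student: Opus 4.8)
The plan is to read statement (ii) as an assertion about the operator $J_\Lambda$ of Section~3 restricted to tails, and then to feed the two implications into Theorem~\ref{thm r epsilon} (for $(i)\Rightarrow(ii)$) and into an elementary two-term estimate (for $(ii)\Rightarrow(i)$). Write $x_n=t^{\lambda_n}/\|t^{\lambda_n}\|_p=\omega_n^{-1/p}t^{\lambda_n}$, where $\omega_n=(p\lambda_n+1)^{-1}=\|t^{\lambda_n}\|_p^p$. For a finitely supported scalar sequence $(c_n)$ one has $\sum_n c_nx_n=J_\Lambda(b)$ with $b_n=c_n\omega_n^{-1/p}$, and then $\|b\|_{\ell^p(\omega)}=\|c\|_{\ell^p}$. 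Thus the double inequality defining asymptotic isometry of $(x_n)_{n\ge N}$ to the canonical basis of $\ell^p$ is exactly the statement that $J_{\Lambda_N}$ is a $(1\pm\varepsilon)$-isometry on $\ell^p(\omega)$, where $\Lambda_N=(\lambda_n)_{n\ge N}$ is the tail (which is again an admissible M\"untz sequence). I would record this translation first, since it is used in both directions.

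For $(i)\Rightarrow(ii)$: fix $\varepsilon\in(0,1)$ and let $r_\varepsilon$ be the threshold given by Theorem~\ref{thm r epsilon}. Since $\Lambda$ is super-lacunary and $\lambda_n\to+\infty$ (the M\"untz condition $\sum\lambda_n^{-1}<+\infty$ forces this), we get $\frac{p\lambda_{n+1}+1}{p\lambda_n+1}\to+\infty$, so there is an $N$ with $(p\lambda_n+1)_{n\ge N}$ being $r_\varepsilon$-lacunary. Applying Theorem~\ref{thm r epsilon} to the tail $\Lambda_N$ and translating through the identification above yields, for every $c$ supported in $\{n\ge N\}$, the inequalities $(1-\varepsilon)\|c\|_{\ell^p}\le\|\sum_{n\ge N}c_nx_n\|_p\le(1+\varepsilon)\|c\|_{\ell^p}$, which is precisely (ii).

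For $(ii)\Rightarrow(i)$ I argue by contraposition. If $\Lambda$ is not super-lacunary, there exist $M\ge1$ and infinitely many indices $n$ with $\lambda_{n+1}\le M\lambda_n$, and along this set $\lambda_n\to+\infty$. The crux is to show that the interaction of two consecutive normalized monomials does not vanish, namely $\|x_n+x_{n+1}\|_p^p\ge 2+c(M,p)$ for a fixed $c(M,p)>0$ and all such $n$ large. Starting from the elementary pointwise inequality $(a+b)^p\ge a^p+b^p+(p-1)\min(a,b)\max(a,b)^{p-1}$ (valid for $a,b\ge0$, $p>1$, proved by writing $(a+b)^p-a^p\ge pa^{p-1}b$ and bounding $b^p\le ba^{p-1}$ when $b\le a$), I integrate and restrict to the interval $I_n=[2^{-1/\lambda_{n+1}},1]$, on which $t^{\lambda_{n+1}}\ge\frac12$ and hence $t^{\lambda_n}\ge\frac12$. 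There both $x_n$ and $x_{n+1}$ are at least $\frac12(p\lambda_n+1)^{1/p}$, so the integrand is $\gtrsim(p\lambda_n+1)$, while $|I_n|=1-2^{-1/\lambda_{n+1}}\gtrsim\lambda_{n+1}^{-1}$. Since $\lambda_{n+1}\le M\lambda_n$, the product $(p\lambda_n+1)\,|I_n|$ is bounded below by a constant depending only on $p$ and $M$, giving $c(M,p)$. Testing asymptotic isometry on the coefficient sequence with $c_n=c_{n+1}=1$ and all other entries zero (so $\|c\|_{\ell^p}^p=2$) would force $\|x_n+x_{n+1}\|_p^p\le(1+\varepsilon)^p\,2$ for all $n\ge N(\varepsilon)$; choosing $\varepsilon$ small enough that $(1+\varepsilon)^p\,2<2+c(M,p)$ contradicts the lower bound along the infinitely many exceptional $n$, proving (i).

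The only delicate point, and the step I expect to be the main obstacle, is the uniform lower bound $\|x_n+x_{n+1}\|_p^p\ge 2+c(M,p)$ for general $p\in(1,\infty)$: a naive one-sided estimate such as $(a+b)^p\ge a^p+pa^{p-1}b$ only produces $\ge 1+c$ and is useless here, so the symmetric inequality together with the localization near $t=1$ is what makes the argument go through. Everything else is either the bookkeeping of the norm identification or a direct appeal to Theorem~\ref{thm r epsilon}.
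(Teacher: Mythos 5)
Your proposal is correct, and while the forward implication coincides with the paper's proof, your converse is a genuinely different argument. For $(i)\Rightarrow(ii)$ you do exactly what the paper does: pass to the tail $\Lambda_N$ on which $(p\lambda_n+1)_n$ is $r_\varepsilon$-lacunary and invoke Theorem~\ref{thm r epsilon} through the identification $\sum c_n x_n = J_\Lambda(b)$, $b_n=c_n\omega_n^{-1/p}$, $\|b\|_{\ell^p(\omega)}=\|c\|_{\ell^p}$ (the paper's $\widetilde a=(a_nq_n^{1/p})_{n\ge N}$ is the same bookkeeping). For $(ii)\Rightarrow(i)$, however, the paper argues directly rather than by contraposition: it tests the asymmetric combination $f_n+uf_{n+1}$ with $u=\varepsilon^{1/p}$, pairs it against $f_n^{p-1}$ via H\"older (using $\|f_n^{p-1}\|_{p'}=1$) to conclude that the overlap $\int_0^1 f_{n+1}f_n^{p-1}\,dt\to 0$, and then bounds this overlap below by $q_n/q_{n+1}$, which forces super-lacunarity. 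You instead test the symmetric pair $x_n+x_{n+1}$ and, assuming $\lambda_{n+1}\le M\lambda_n$ along a subsequence, extract a uniform defect $\|x_n+x_{n+1}\|_p^p\ge 2+c(M,p)$ from the pointwise inequality $(a+b)^p\ge a^p+b^p+(p-1)\min(a,b)\max(a,b)^{p-1}$ localized on $[2^{-1/\lambda_{n+1}},1]$, where both monomials exceed $\frac12$ and the interval has length $\gtrsim\lambda_{n+1}^{-1}\gtrsim (M\lambda_n)^{-1}$; I checked the inequality, the localization, and the constant chase, and they all hold (note that the M\"untz condition indeed gives $\lambda_n\to\infty$, so $\lambda_{n+1}\ge\ln 2$ eventually along the exceptional set). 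The trade-off: the paper's duality trick is shorter and yields a clean limit statement ($q_n/q_{n+1}\to0$) with no case analysis, while your route is entirely elementary (no H\"older pairing) and quantitative --- it exhibits an explicit amount $c(M,p)>0$ by which asymptotic isometry must fail whenever the ratios $\lambda_{n+1}/\lambda_n$ stay bounded by $M$ infinitely often, which is slightly more information than the qualitative equivalence requires. Both proofs use only the upper estimate in $(ii)$, so your observation that the right-hand inequality alone characterizes super-lacunarity matches the paper's.
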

\begin{proof}
Assume that $\Lambda$ is super-lacunary: $\displaystyle\lim\limits_{n\rightarrow+\infty}\frac{\lambda_{n+1}}{\lambda_n}=+\infty$. As usual, we denote $q_n=(p\lambda_n+1)$, and $f_n(t)=q_n^{\frac{1}{p}}t^{\lambda_n}=\displaystyle\frac{t^{\lambda_n}}{\|t^{\lambda_n}\|_{p}}$.
We need to prove that for any $\varepsilon>0$, there exists $N\in\mathbb{N}$ such that
\begin{equation}\label{eq asymptotical iso}
(1-\varepsilon)\Big(\sum\limits_{n\geq N}|a_n|^p\Big)^{\frac{1}{p}}\leq \Big\|\sum\limits_{n\geq N}a_nf_n\Big\|_{p}\leq (1+\varepsilon)\Big(\sum\limits_{n\geq N}|a_n|^p\Big)^{\frac{1}{p}}
\end{equation}
for any $a=(a_n)_n\in c_{00}$. For a given $\varepsilon\in(0,1)$ we consider the number $r_\varepsilon$ given by Th.\ref{thm r epsilon}. Since $(q_n)_n$ is also super-lacunary, there is an integer $N$ large enough to insure that $q_{k+1}\geq r_\varepsilon q_k$ when $k\geq N$ and so the sequence $(p\lambda_{n+N}+1)_n$ is $r_\varepsilon-$lacunary. 
We apply the estimation of $\|J_\Lambda (\widetilde{a})\|_{p}$ given by Th.\ref{thm r epsilon} with the sequence $\widetilde{a}=\Big(a_nq_n^{\frac{1}{p}}\Big)_{n\ge N}$ and we get the result.

For the converse, let $\varepsilon\in(0,1)$. From the right hand inequality of $(\ref{eq asymptotical iso})$, we get the existence of an integer $N\in\mathbb{N}$ such that for any integer $n\geq N$, for any $u\in(0,1)$, 
$$\|f_n+uf_{n+1}\|_{p}\leq(1+\varepsilon)(1+u^p)^{\frac{1}{p}}\leq (1+\varepsilon)\Big(1+\frac{u^p}{p}\Big).$$
On the other hand, Hölder's inequality and $\|f_n^{p-1}\|_{p'}=1$ give
\begin{align*}
\|f_n+uf_{n+1}\|_{p}& \geq \int_0^1 (f_n+uf_{n+1})f_n^{p-1}dt\\
&=1+u\int_0^1f_{n+1}f_n^{p-1}dt
\end{align*}
We apply this for $u=\varepsilon^{\frac{1}{p}}$, we finally get:
$$\int_0^1f_{n+1}f_n^{p-1}dt\leq 3\varepsilon^{1-\frac{1}{p}}~~$$
and since $p>1$, we obtain $\displaystyle\int_0^1f_{n+1}f_n^{p-1}dt\rightarrow 0$ when $n\rightarrow+\infty.$ 

But
\begin{align*}
\int_0^1f_{n+1}f_n^{p-1}dt= \int_0^1 q_{n+1}^{\frac{1}{p}}q_n^{\frac{1}{p'}}t^{(p-1)\lambda_n+\lambda_{n+1}}dt\geq q_n \int_0^1t^{p\lambda_{n+1}}dt=\frac{q_n}{q_{n+1}}\cdot
\end{align*}
Thus, $\displaystyle\frac{p\lambda_n+1}{p\lambda_{n+1}+1}\rightarrow 0$ when $n\rightarrow+\infty$, and $\Lambda$ is super-lacunary. 
\end{proof}


\section{Carleson measures}

In this section, $\mu$ denotes a positive and finite measure on $[0,1)$ and $\Lambda$ is a fixed lacunary sequence. We shall generalize some results of \cite{CFT} and \cite{NT} with the estimations introduced in section 2. In particular, we give a positive answer to a question asked in \cite{NT}: if $\mu$ is a sublinear measure on $[0,1)$ and $\Lambda$ is lacunary, then the embedding operator $i_\mu^p:M_\Lambda^p\rightarrow L^p(\mu)$ is bounded.

\begin{defin}\label{def mesures}
Let $p\in[1,+\infty)$. We say that:
\begin{enumerate}[(i)]
\item $\mu$ is {\em sublinear} if there exists a constant $C>0$ such that $$\forall \eps\in(0,1),\qquad\mu([1-\varepsilon,1])\leq C \varepsilon~~;$$ 
The smallest admissible constant $C$ above is denoted $\|\mu\|_S$.
\item $\mu$ {\em satisfies $(B_p)$} when there exists a constant $C$ (depending only on $\Lambda$ and $p$) such that:
\begin{align}\tag{$B_p$}
\forall n\in\mathbb{N},\qquad\int_{[0,1)}t^{p\lambda_n}d\mu\leq \frac{C}{\lambda_n}\cdot
\end{align}
\item $\mu$ is a  {\em Carleson measure for $M_\Lambda^p$} when there exists a constant $C$ (depending only on $\Lambda$ and $p$) such that, for any M\"untz polynomial $f(t)=\sum\limits_n a_nt^{\lambda_n}$, 
$$\|f\|_{L^p(\mu)}\leq C \|f\|_p\;.$$
\end{enumerate}
\end{defin}

In this case we can define the following bounded embedding:
$$i_\mu^p:\left\lbrace\begin{array}{ccc}
M_\Lambda^p & \longrightarrow & L^p(\mu)\\
f& \longmapsto &f
\end{array}\right.~~.$$

\begin{rem}\label{rem mesures}
The notions defined above are connected to each other:
\begin{enumerate}[(i)]
\item If $\mu$ is a Carleson measure for $M_\Lambda^p$, then $\mu$ satisfies $(B_p)$.
\item For $p,q\in[1,+\infty)$ such that $p<q$, we have:
$$\mu\text{ is sublinear }\Rightarrow (B_p)\Rightarrow (B_q).$$
Indeed, since $t\in[0,1)\mapsto t^{p\lambda_n}$ is an increasing function, \cite[Lemma 2.2]{CFT} gives $$\displaystyle\int_{[0,1)}t^{p\lambda_n}d\mu\leq \|\mu\|_S\int_{0}^1t^{p\lambda_n}dt\leq \frac{p^{-1}\|\mu\|_S}{\lambda_n}\cdot$$
\item Moreover, if $\Lambda$ is a quasi-geometric sequence, and $\mu$ satisfies $(B_p)$ for some $p\in[1,+\infty)$ then $\mu$ is sublinear. It is essentially done in \cite{CFT} in the case $p=1$. More precisely, we have: $$\|\mu\|_S\leq 3pR\Big(\sup\limits_{n\in\mathbb{N}}\lambda_n\int_{[0,1)}t^{p\lambda_n}d\mu\Big)~~,$$
where $R$ is a constant such that $\lambda_{n+1}\leq R\lambda_n.$
\end{enumerate}
\end{rem}

The previous remarks suggest the natural question: does $(B_p)$ imply that $\mu$ is a Carleson measure for $M_\Lambda^p$ ? 

The answer is not clear in general. In \cite[Ex.6.2]{CFT}, they build a sublinear measure (so it satisfies $(B_1)$) and a sequence $\Lambda$ such that $\mu$ is not a Carleson measure for $M_\Lambda^1.$ But when $\Lambda$ is lacunary we shall see that the condition $(B_p)$ is almost sufficient for $\mu$ to be a Carleson measure for $M_\Lambda^p$, and even sufficient when $p=1$ or when $\Lambda$ is a quasi-geometric sequence.

The cornerstone of our approach is the following remark.
\begin{rem}\label{rem factorization}
For a lacunary sequence $\Lambda$, we can factorize $i_\mu^p$ through $\ell^p(w)$ as follows:
$$
\xymatrix{
M_\Lambda^p \ar[rr]^{i_\mu^p} \ar[rd]_{J_\Lambda^{-1}} &  & L^p(\mu) \\
 & \ell^p(w)\ar[ru]_{T_\mu} & 
}
$$
where $w=(w_n)_n$ is a weight satisfying $w_n\approx\lambda_n^{-1}$. With this kind of weight, the operator $J_\Lambda$ realizes an isomorphism between $\ell^p(w)$ and $M_\Lambda^p$: this is a rewording of the  Gurariy-Macaev Theorem (Th.\ref{thm gurariy}). $T_\mu$ is defined in section 2. The most natural weight is $w_n=(p\lambda_n+1)^{-1}$ but in this section, we are interested in estimations up to constants (possibly depending on $p$ and $\Lambda$). Of course, the results are the same with equivalent weights. So, we choose (in order to simplify) to fix the weight $w_n=\lambda_n^{-1}$.

In particular we obtain:
$$\|i_\mu^p\|\lesssim \|T_\mu\|_p\leq \sup\limits_{n}D_n(p)~~,$$
and for $n\in\mathbb{N}$ we have $$a_{n+1}(i_\mu^p)\lesssim a_{n+1}(T_\mu)\le D_n^\ast(p)$$
where the sequence $(D_n(p))_n$ is defined as in section 2 by the formula (here with our specified weight):
$$D_n(p)=\Big(\int_{[0,1)}\lambda_n^{\frac{1}{p}}t^{\lambda_n}\Big(\sum\limits_{k\in\mathbb{N}}\lambda_k^{\frac{1}{p}}t^{\lambda_k}\Big)^{p-1}d\mu\Big)^{\frac{1}{p}}~~.$$
\end{rem}

We first treat the case $p=1$.
\begin{prop}\label{prop i mu 1 borne}
Let $\Lambda=(\lambda_n)_n$ be a lacunary sequence. The following are equivalent:
\begin{enumerate}[(i)] 
\item $\mu$ satisfies $(B_1)$ ;
\item $\mu$ is a Carleson measure for $M_\Lambda^1$.
\end{enumerate}
In this case there exists a constant $C$ depending only on $\Lambda$ such that $$\|i_\mu^1\|\leq C\Big(\displaystyle\sup\limits_{n\in\mathbb{N}}\lambda_n\int_{[0,1)}t^{\lambda_n}d\mu\Big)\cdot$$
\end{prop}
\begin{proof}
$(ii)\Rightarrow(i)$ is obvious. For the converse, we apply the factorization described in Remark~\ref{rem factorization} : this gives $\|i_\mu^1\|\leq \|T_\mu\|_1.\|J_\Lambda^{-1}\|_1.$
On the other hand, Prop.\ref{prop an(T) Dn} gives $\|T_\mu\|_1\leq \sup\limits_{n}D_n(1)$ and we get the result. 
\end{proof}
As a corollary, we recover quickly \cite[Th.5.5]{CFT} in the lacunary case: the sublinear measures satisfy $(B_1)$, and so any sublinear measure is a Carleson measure for $M_\Lambda^1$.
For the general lacunary case, we have the following theorem.

\begin{thm}\label{thm Bp i mu q borne}
Let $\Lambda=(\lambda_n)_n$ be an $r$-lacunary sequence. Let $\mu$ be a positive measure on $[0,1)$ and $p\in[1,+\infty)$. We assume that $\mu$ satisfies $(B_p)$.

Then $\mu$ is a Carleson measure for $M_\Lambda^q$ for any $q>p$.
Moreover, we have $$\|i_\mu^q\|\leq C\Big(\displaystyle\sup\limits_{n\in\mathbb{N}}\lambda_n\int_{[0,1)}t^{p\lambda_n}d\mu\Big)^{\frac{1}{q}}$$ where $C$ depends only on $p,q$ and $\Lambda$.
\end{thm}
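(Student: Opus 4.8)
The plan is to run the factorization of Remark \ref{rem factorization} and reduce everything to a uniform bound on the numbers $D_n(q)$. Set $M=\sup_{n}\lambda_n\int_{[0,1)}t^{p\lambda_n}\,d\mu$, which is finite precisely because $\mu$ satisfies $(B_p)$, and recall the target is $\|i_\mu^q\|\lesssim M^{1/q}$. Since $\Lambda$ is lacunary, $J_\Lambda$ is an isomorphism by the Gurariy--Macaev theorem, so Remark \ref{rem factorization} gives $\|i_\mu^q\|\lesssim\|T_\mu\|_q$, and Proposition \ref{prop an(T) Dn}$(ii)$ gives $\|T_\mu\|_q\le\sup_n D_n(q)$. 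Thus it suffices to prove
$$D_n(q)^q=\lambda_n^{\frac1q}\int_{[0,1)}t^{\lambda_n}\Big(\sum_{k\ge0}\lambda_k^{\frac1q}t^{\lambda_k}\Big)^{q-1}d\mu\lesssim M\qquad\text{uniformly in }n,$$
the implied constant being allowed to depend on $p,q$ and on the ratio $r$ only.

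First I would control the inner sum pointwise. Because $\Lambda$ is $r$-lacunary, the sum $\sum_k\lambda_k^{1/q}t^{\lambda_k}$ is, up to a multiplicative constant depending only on $r$ and $q$, comparable to its largest term, the neighbouring terms decaying geometrically away from the maximising index. This yields $\big(\sum_k\lambda_k^{1/q}t^{\lambda_k}\big)^{q-1}\lesssim\sum_k\lambda_k^{(q-1)/q}t^{(q-1)\lambda_k}$, and hence $D_n(q)^q\lesssim\lambda_n^{1/q}\sum_k\lambda_k^{(q-1)/q}\int_{[0,1)}t^{\lambda_n+(q-1)\lambda_k}\,d\mu$. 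Each moment is then estimated through $(B_p)$: for any $E>0$ one has $t^{E}\le t^{p\lambda_m}$ as soon as $p\lambda_m\le E$, so choosing the largest admissible index $m$ gives $\int_{[0,1)}t^{E}\,d\mu\le M/\lambda_m$.

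Assembling the two regimes is transparent when the exponent surplus is large. For $k\ge n$ the exponent $E=\lambda_n+(q-1)\lambda_k$ is of size $\lambda_k$, and the surplus $(q-1)\lambda_k$ over $p\lambda_k$ coming from $q>p$ is what keeps the matched $(B_p)$-scale comparable to $\lambda_k$; this produces $\int_{[0,1)}t^{\lambda_n+(q-1)\lambda_k}\,d\mu\lesssim M/\lambda_k$ and therefore a convergent geometric tail $\lambda_n^{1/q}\sum_{k\ge n}\lambda_k^{(q-1)/q}M/\lambda_k=M\lambda_n^{1/q}\sum_{k\ge n}\lambda_k^{-1/q}\lesssim M$. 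For $k<n$ the factor $t^{\lambda_n}$ contributes the decay $e^{-\lambda_n(1-t)}$, which, together with lacunarity, again sums to $\lesssim M$ after multiplication by $\lambda_n^{1/q}$. Collecting the two sums gives the desired uniform bound on $D_n(q)^q$, hence the stated Carleson estimate with constant depending only on $p,q$ and $\Lambda$.

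The hard part will be the uniform control of the moments across the (possibly unbounded) gaps $\lambda_{k+1}/\lambda_k$ of a general lacunary---indeed super-lacunary---sequence. When the gaps are large the crude term-by-term bound $\int t^{\lambda_n+(q-1)\lambda_k}\,d\mu\le M/\lambda_m$ can match a scale $\lambda_m$ a full index below $\lambda_k$, and a naive summation then degenerates; the way around this is to keep the moments together and exploit that the family of inequalities $(B_p)$ cannot be saturated simultaneously at all scales, the surplus furnished by $q>p$ restoring the geometric decay lost to the gaps. This is exactly the point where the strict inequality is essential, for at $q=p$ the scale-matching degrades across every gap. I would also record the shortcut available in the quasi-geometric case: by Lemma \ref{lem 1 sur 1-t} the inner sum is then genuinely comparable to $(1-t)^{-1/q}$, the ratios $\lambda_{k+1}/\lambda_k$ are bounded, and all the sums above converge without any gap-bridging, which matches the fact that in that case $(B_p)$ already forces $\mu$ to be sublinear.
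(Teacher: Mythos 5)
Your overall strategy is the paper's: factorize $i_\mu^q=T_\mu\circ J_\Lambda^{-1}$ as in Remark~\ref{rem factorization}, invoke Prop.~\ref{prop an(T) Dn}, and reduce everything to $\sup_n D_n(q)\lesssim M$, $M=\sup_n\lambda_n\int_{[0,1)} t^{p\lambda_n}d\mu$. Your pointwise comparison of the lacunary sum $\sum_k\lambda_k^{1/q}t^{\lambda_k}$ with its largest term is correct (though you do not prove it). The genuine gap is exactly the one you flag but do not fix. After reducing to $\lambda_n^{1/q}\sum_k\lambda_k^{(q-1)/q}\int_{[0,1)}t^{\lambda_n+(q-1)\lambda_k}d\mu$, your bound $\int_{[0,1)}t^{\lambda_n+(q-1)\lambda_k}d\mu\lesssim M/\lambda_k$ for $k\ge n$ is justified only when $\lambda_n+(q-1)\lambda_k\ge p\lambda_k$, i.e.\ always if $q\ge p+1$, but when $p<q<p+1$ only for $\lambda_k\le\lambda_n/(p-q+1)$. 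Beyond that range the claim is \emph{false} for super-lacunary $\Lambda$: testing $\mu=c\,\delta_x$ with $\ln(1/x)=\frac{\ln(\lambda_k/\lambda_{k-1})}{p(\lambda_k-\lambda_{k-1})}$ and $c$ as large as $(B_p)$ permits, one gets $\int t^{\lambda_n+(q-1)\lambda_k}d\mu\approx M\,\lambda_{k-1}^{\frac{q-1}{p}-1}\lambda_k^{-\frac{q-1}{p}}$, exceeding $M/\lambda_k$ by the unbounded factor $(\lambda_k/\lambda_{k-1})^{1-\frac{q-1}{p}}$. The same scale mismatch afflicts your $k<n$ block when $p>1$ (the largest $\lambda_m$ with $p\lambda_m\le\lambda_n+(q-1)\lambda_k$ can sit a full gap below $\lambda_n$, and the naive summation produces $M(\lambda_n/\lambda_{n-1})^{1/q}$). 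Your closing remarks ("keep the moments together", "$(B_p)$ cannot be saturated simultaneously at all scales") correctly name the obstruction but supply no mechanism, so as written the argument proves the theorem only for $q\ge p+1$ (and in the quasi-geometric case, as you note).

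The missing idea is the one implemented in the paper's Lemma~\ref{lem Dn(q) borne}: never estimate the mixed moments $\int t^{\lambda_n+(q-1)\lambda_k}d\mu$ one by one, but interpolate between the two scales with H\"older's inequality \emph{in $L(\mu)$, before expanding the $(q-1)$-power}. For $k\le n$ one writes $\int t^{\lambda_n}\big(\sum_{k\le n}\lambda_k^{1/q}t^{\lambda_k}\big)^{q-1}d\mu\le\|t^{\lambda_n}\|_{L^p(\mu)}\big\|\sum_{k\le n}\lambda_k^{1/q}t^{\lambda_k}\big\|_{L^{p'(q-1)}(\mu)}^{q-1}$ and then Minkowski; for $k>n$ with $q-1<p$ one uses the exponent $\alpha=p/(p-(q-1))$, chosen so that $(q-1)\alpha'=p$, so that after Minkowski only the moments $\int t^{p\lambda_k}d\mu$ — exactly those controlled by $(B_p)$, at the correct scale — ever appear. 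This H\"older step performs automatically the two-scale optimization your sketch would have to do by hand (and which, done by hand as in the one-atom computation above, does yield summands of the form $M(\lambda_{k-1}/\lambda_k)^{\theta}$ with $\theta>0$ precisely because $q>p$). Replacing your term-by-term use of $(B_p)$ by these two H\"older applications closes the gap and recovers the paper's proof.
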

Before the proof,
we prove the following lemma.

\begin{lem}\label{lem Dn(q) borne}
Under the same assumptions of Th.\ref{thm Bp i mu q borne}, we have
$$D_n(q)^q\leq C \Big(\sup\limits_{k\geq n}\lambda_k\int_{[0,1)}t^{p\lambda_k}d\mu\Big)^{\frac{1}{p}}\Big(\sup\limits_{k\in\mathbb{N}}\lambda_k\int_{[0,1)}t^{p\lambda_k}d\mu\Big)^{\frac{1}{p'}}~~,$$
where $C$ is  constant depending only on $p,q$ and $r$.
\end{lem}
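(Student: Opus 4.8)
The plan is to estimate the integral defining $D_n(q)^q$ by hand, exploiting that the weight $t^{\lambda_n}$ concentrates the relevant $\mu$-mass near $t=1$ at the scale $1-t\approx 1/\lambda_n$; this localization is precisely what will turn the global supremum into the restricted one $\sup_{k\ge n}$ and produce the asymmetry $A_n^{1/p}B^{1/p'}$. Writing $S(t)=\sum_k\lambda_k^{1/q}t^{\lambda_k}$, so that $D_n(q)^q=\lambda_n^{1/q}\int_{[0,1)}t^{\lambda_n}S(t)^{q-1}\,d\mu$, I would use the lacunary majorization of Lemma~\ref{lem 1 sur 1-t} (and Remark~\ref{rem 1 sur 1-t} in the merely lacunary case) to bound the excess power of $S$, \emph{while keeping the factor $t^{\lambda_n}$ intact}: discarding it and isolating a cutoff-free factor $\int S^{q-1}d\mu$ overcounts the region near $t=0$ and breaks the uniformity in $n$, so the cutoff must be retained throughout.

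Next I would decompose $[0,1)$ into the lacunary rings $R_k=\{t\in[0,1):\ \lambda_k\le (1-t)^{-1}<\lambda_{k+1}\}$. On $R_k$ one has $(1-t)^{-1/q}\approx\lambda_k^{1/q}$ and, more importantly, the genuine size of the Müntz sum is $S(t)\approx\lambda_k^{1/q}$ (the dominant monomial being the largest $\lambda_j\lesssim (1-t)^{-1}$), so that in large gaps the crude estimate $S\lesssim(1-t)^{-1/q}$ must be replaced by this sharper comparison. Since $t\ge 1-1/\lambda_k$ on $R_k$, we have $t^{p\lambda_k}\gtrsim 1$ there, whence $(B_p)$ yields $\mu(R_k)\lesssim \lambda_k^{-1}\bigl(\lambda_k\int_{[0,1)}t^{p\lambda_k}d\mu\bigr)$; this is the only way $(B_p)$ enters, at the scale of each ring. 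Finally $t^{\lambda_n}\approx 1$ on the rings with $k\ge n$ and is exponentially small ($\lesssim e^{-\lambda_n/\lambda_{k+1}}$) on the rings with $k<n$, which is the mechanism suppressing the low-index rings.

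To extract the exponents $1/p$ and $1/p'$, I would apply Hölder's inequality to the resulting series over $k$: one factor collects the ring weights controlled, for $k\ge n$, by $A_n=\sup_{k\ge n}\lambda_k\int t^{p\lambda_k}d\mu$, and the other by the global $B=\sup_k\lambda_k\int t^{p\lambda_k}d\mu$. The remaining series is geometric because $\Lambda$ is $r$-lacunary ($\lambda_{k+1}\ge r\lambda_k$); crucially, the power of $\lambda_n$ coming from the prefactor $\lambda_n^{1/q}$ cancels against the factor $\lambda_n^{-1/q}$ produced by summing $\sum_{k\ge n}\lambda_k^{-1/q}\lesssim \lambda_n^{-1/q}$, and the strict inequality $q>p$ is exactly what guarantees convergence with this cancellation, leaving a constant depending only on $p,q,r$.

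The main obstacle I expect is the merely lacunary (non quasi-geometric) regime, where the ratios $\lambda_{k+1}/\lambda_k$ are unbounded: inside a large gap the bound $S(t)\lesssim(1-t)^{-1/q}$ is wildly lossy, and one must argue with the true order $\lambda_k^{1/q}$ of the sum together with the compensating growth of $\lambda_k\int t^{p\lambda_k}d\mu$ so that no contribution blows up. Controlling the ring contributions near the gap boundaries (where the next monomial $t^{\lambda_{k+1}}$ begins to dominate $S$ and, simultaneously, inflates the relevant term in $B$) is the delicate point, and keeping every constant dependent on $p,q,r$ only — in particular clearing any spurious $\lambda_0$-dependence from the ring sums — is where the bookkeeping must be done carefully.
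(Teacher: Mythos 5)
Your ring decomposition is a genuinely different strategy from the paper's, but as written it contains a gap that is fatal precisely in the merely lacunary regime the lemma must cover, and it is the very point you defer: the claim that $S(t)\approx\lambda_k^{1/q}$ on $R_k=\{t:\lambda_k\le(1-t)^{-1}<\lambda_{k+1}\}$ is false when the ratios $\lambda_{k+1}/\lambda_k$ are unbounded. On the right portion of $R_k$, where $1-t\approx 1/\lambda_{k+1}$, the single term $j=k+1$ already gives $S(t)\ge \lambda_{k+1}^{1/q}t^{\lambda_{k+1}}\gtrsim \lambda_{k+1}^{1/q}$, so the only uniform bound available on the whole ring is $S\lesssim(1-t)^{-1/q}\lesssim\lambda_{k+1}^{1/q}$. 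Feeding that into your scheme, the ring-$k$ contribution for $k\ge n$ is of order $\lambda_n^{1/q}\,\mu(R_k)\,\lambda_{k+1}^{(q-1)/q}\lesssim \lambda_n^{1/q}M_k\lambda_k^{-1}\lambda_{k+1}^{(q-1)/q}$ (with $M_k=\lambda_k\int t^{p\lambda_k}d\mu$), and a single huge gap, say $\lambda_{n+1}=\lambda_n^N$, makes this exceed any bound of the form $C\sup_k M_k$; similarly the ring $R_{n-1}$ near its right edge, where $t^{\lambda_n}\approx 1$, contributes $\approx\lambda_n\,\mu(R_{n-1})\lesssim M_{n-1}\lambda_n/\lambda_{n-1}$, again unbounded. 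The repair is to subdivide each ring according to which monomial dominates and to charge the local $\mu$-mass to $\int t^{p\lambda_{k+1}}d\mu$ rather than $\int t^{p\lambda_k}d\mu$ — but carrying this out amounts to re-splitting the sum $S(t)$ term by term, i.e.\ to abandoning the spatial decomposition. You correctly identify this as ``the delicate point,'' but naming an obstacle is not overcoming it, so the proposal does not yet constitute a proof. (Two smaller issues of the same flavour: $t^{p\lambda_k}\gtrsim 1$ on $R_k$ fails uniformly for the rings with $\lambda_k\lesssim 1$, whose number depends on $\lambda_0$, not only on $r$; and the exponents $1/p,1/p'$ do not really come from a Hölder step over $k$ — once each ring carries a full factor $M_k$, the trivial factorization $M_k=M_k^{1/p}M_k^{1/p'}$ suffices.)

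For comparison, the paper never discretizes $[0,1)$ and needs no two-sided pointwise control of $S$: it splits the inner sum as $\sum_{k\le n}+\sum_{k>n}$, and estimates each piece by Hölder's inequality with respect to $\mu$ together with the triangle inequality in an auxiliary Lebesgue space — $L^{p'(q-1)}(\mu)$ for the low part (or simply $t^{\lambda_k}\le 1$ when $p=1$), and for the high part either $L^{q-1}(t^{\lambda_n}d\mu)$ when $q-1\ge p$, or Hölder with $\alpha=p/(p-(q-1))$, $(q-1)\alpha'=p$, when $q-1<p$. In this way every monomial's $\mu$-integral is compared individually to $\int t^{p\lambda_j}d\mu$, i.e.\ to $M_j/\lambda_j$, and lacunarity is used only through the geometric sums $\sum_{k\le n}\lambda_k^{\beta}\lesssim\lambda_n^{\beta}$ and $\sum_{k>n}\lambda_k^{-\beta}\lesssim\lambda_n^{-\beta}$, which hold uniformly for $r$-lacunary sequences. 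That is exactly what makes the unbounded-gap case harmless there and keeps the constant depending only on $p$, $q$ and $r$; your approach would buy a more geometric picture (and works essentially as stated for quasi-geometric $\Lambda$, where $S(t)\approx(1-t)^{-1/q}$ by Lemma~\ref{lem 1 sur 1-t}), but in the general lacunary case it must be supplemented by the dominant-monomial refinement before it closes.
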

\begin{proof}
Since $(\lambda_k)_k$ is $r$-lacunary, for any $\beta\in\mathbb{R}_+^\ast$ we have:
$$\sum\limits_{k\leq n}\lambda_k^{\beta}\leq \frac{1}{1-r^{-\beta}}\lambda_n^\beta\text{ ~~ and ~~ }\sum\limits_{k>n}\lambda_k^{-\beta}\leq \frac{1}{r^{\beta}-1}\lambda_n^{-\beta}\cdot$$
For any $j\in\mathbb{N}$, we denote $M_j=\lambda_j\displaystyle\int_{[0,1)}t^{p\lambda_j}d\mu$ and $M=\sup\limits_j M_j<+\infty.$
Since $q>1$, we have for any $A,B\in\mathbb{R}_+$, $(A+B)^{q-1}\leq 2^{q-1}(A^{q-1}+B^{q-1})$. This gives:
\begin{align*}
D_n(q)^q&=\int_{[0,1)}\lambda_n^{\frac{1}{q}}t^{\lambda_n}\Big(\sum\limits_{k\in\mathbb{N}}\lambda_k^{\frac{1}{q}}t^{\lambda_k}\Big)^{q-1}d\mu\\
&\lesssim \int_{[0,1)}\lambda_n^{\frac{1}{q}}t^{\lambda_n}\Big(\sum\limits_{k\leq n}\lambda_k^{\frac{1}{q}}t^{\lambda_k}\Big)^{q-1}d\mu + \int_{[0,1)}\lambda_n^{\frac{1}{q}}t^{\lambda_n}\Big(\sum\limits_{k> n}\lambda_k^{\frac{1}{q}}t^{\lambda_k}\Big)^{q-1}d\mu
\end{align*}
We first majorize the first term above. If $p>1$, Hölder's inequality gives:
\begin{align*}
\int_{[0,1)}\lambda_n^{\frac{1}{q}}t^{\lambda_n}\big(\sum\limits_{k\leq n}\lambda_k^{\frac{1}{q}}t^{\lambda_k}\big)^{q-1}d\mu &\leq \lambda_n^{\frac{1}{q}}\Big(\int t^{p\lambda_n}d\mu\Big)^{\frac{1}{p}}\Big(\int\big(\sum\limits_{k\leq n}\lambda_k^{\frac{1}{q}}t^{\lambda_k}\big)^{p'(q-1)}d\mu\Big)^{\frac{1}{p'}}\\
&\leq M_n^{\frac{1}{p}}\lambda_n^{\frac{1}{q}-\frac{1}{p}}\Big(\sum\limits_{k\leq n}\lambda_k^{\frac{1}{q}}\|t^{\lambda_k}\|_{L^{p'(q-1)}(\mu)}\Big)^{q-1}
\end{align*}
where we used the triangle inequality since $p'(q-1)\geq p\geq 1$. For any $k\leq n$ we have $\displaystyle\int_{[0,1)}t^{p'(q-1)\lambda_k}d\mu\leq \int_{[0,1)}t^{p\lambda_k}d\mu\leq M_k\lambda_k^{-1}$. This gives:
\begin{align*}
\int_{[0,1)}\lambda_n^{\frac{1}{q}}t^{\lambda_n}\big(\sum\limits_{k\leq n}\lambda_k^{\frac{1}{q}}t^{\lambda_k}\big)^{q-1}d\mu &\leq \sup\limits_{k\leq n}M_k^{\frac{1}{p'}}M_n^{\frac{1}{p}}\lambda_n^{\frac{1}{q}-\frac{1}{p}}\Big(\sum\limits_{k\leq n}\lambda_{k}^{\frac{1}{q}-\frac{1}{p'(q-1)}}\Big)^{q-1} \\
&\lesssim \sup\limits_{k\leq n}M_k^{\frac{1}{p'}} M_n^{\frac{1}{p}}\lambda_n^{\frac{1}{q}-\frac{1}{p}} \lambda_n^{\frac{1}{q'}-\frac{1}{p'}}\\
&=\sup\limits_{k\leq n}M_k^{\frac{1}{p'}}M_n^{\frac{1}{p}}.
\end{align*}
If $p=1$, the inequality $t^{\lambda_k}\leq 1$ gives directly :
\begin{align*}
\int_{[0,1)}\lambda_n^{\frac{1}{q}}t^{\lambda_n}\big(\sum\limits_{k\leq n}\lambda_k^{\frac{1}{q}}t^{\lambda_k}\big)^{q-1}d\mu &\leq M_n\lambda_n^{-1}\lambda_n^{\frac{1}{q}}\Big(\sum\limits_{k\leq n}\lambda_k^{\frac{1}{q}}\Big)^{q-1}\lesssim M_n.
\end{align*}

For the second term we treat two cases. First if $q-1\geq p$, the triangle inequality gives:
\begin{align*}
\int_{[0,1)}\lambda_n^{\frac{1}{q}}t^{\lambda_n}\Big(\sum\limits_{k> n}\lambda_k^{\frac{1}{q}}t^{\lambda_k}\Big)^{q-1}d\mu &\leq \lambda_n^{\frac{1}{q}}\Big( \sum\limits_{k> n} \|\lambda_k^{\frac{1}{q}}t^{\lambda_k}\|_{L^{q-1}(t^{\lambda_n} \mu)}\Big)^{q-1}\\
&=\lambda_n^{\frac{1}{q}}\Big( \sum\limits_{k> n} \lambda_k^{\frac{1}{q}} \Big(\int_{[0,1)}t^{(q-1)\lambda_k+\lambda_n}d\mu \Big)^{\frac{1}{q-1}} \Big)^{q-1}\\
&\leq \lambda_n^{\frac{1}{q}}\Big( \sum\limits_{k> n} \lambda_k^{\frac{1}{q}} \Big(\int_{[0,1)}t^{p\lambda_k}d\mu \Big)^{\frac{1}{q-1}} \Big)^{q-1}\\
&\leq \sup\limits_{k>n} M_k \lambda_n^{\frac{1}{q}}\Big(\sum\limits_{k>n}\lambda_k^{\frac{1}{q}-\frac{1}{q-1}}\Big)^{q-1}\\
&\lesssim \sup\limits_{k>n}M_k\lambda_n^{\frac{1}{q}}\big(\lambda_n^{\frac{-1}{q(q-1)}}\big)^{q-1}=\sup\limits_{k>n}M_k.
\end{align*}
If $q-1<p$, let $\alpha=\displaystyle\frac{p}{p-(q-1)}\cdot$ It satisfies $\alpha>q$ and  $(q-1)\alpha'=p.$ We apply Hölder's inequality:
\begin{align*}
\int_{[0,1)}\lambda_n^{\frac{1}{q}}t^{\lambda_n}&\Big(\sum\limits_{k> n}\lambda_k^{\frac{1}{q}}t^{\lambda_k}\Big)^{q-1}d\mu \\
&\leq \lambda_n^{\frac{1}{q}}\Big(\int_{[0,1)} t^{\alpha\lambda_n}d\mu\Big)^{\frac{1}{\alpha}}\Big(\int_{[0,1)}\big(\sum\limits_{k> n}\lambda_k^{\frac{1}{q}}t^{\lambda_k}\big)^{p}d\mu\Big)^{\frac{1}{\alpha'}}\\
&\leq M_n^{\frac{1}{\alpha}}\lambda_n^{\frac{1}{q}-\frac{1}{\alpha}}\Big(\sum\limits_{k>n}\lambda_k^{\frac{1}{q}}\Big(\int_{[0,1)}t^{p\lambda_n}d\mu\Big)^{\frac{1}{p}}\Big)^{\frac{p}{\alpha'}}
\end{align*}
where we applied again the triangle inequality. We obtain:
\begin{align*}
\int_{[0,1)}\lambda_n^{\frac{1}{q}}t^{\lambda_n}\Big(\sum\limits_{k> n}\lambda_k^{\frac{1}{q}}t^{\lambda_k}\Big)^{q-1}d\mu &\leq M_n^{\frac{1}{\alpha}}\sup\limits_{k>n}M_k^{\frac{1}{\alpha'}}\lambda_n^{\frac{1}{q}-\frac{1}{\alpha}}\Big(\sum\limits_{k>n}\lambda_k^{\frac{1}{q}-\frac{1}{p}}\Big)^{q-1}\\
&\lesssim M_n^{\frac{1}{\alpha}}\sup\limits_{k>n}M_k^{\frac{1}{\alpha'}}.
\end{align*}
We finally get:
$$D_n(q)^q\lesssim M_n^{\frac{1}{p}} \sup\limits_{k\leq n}M_k^{\frac{1}{p'}} + \sup\limits_{k\geq n}M_k\cdot$$
\end{proof}

Now we can prove Th.\ref{thm Bp i mu q borne}.

\begin{proof}
Since $\Lambda$ is lacunary, we can factorize $i_\mu^q$ through $\ell^q(w)$ as in Remark~\ref{rem factorization}. We obtain $$\|i_\mu^q\|\lesssim \|T_\mu\|_q\leq \sup\limits_n D_n(q)$$
and Lemma~\ref{lem Dn(q) borne} gives the result.
\end{proof}

\begin{cor}
If $\mu$ is sublinear and $\Lambda$ is lacunary, then $\mu$ is a Carleson measure for $M_\Lambda^q$, for any $q\in[1,+\infty).$
\end{cor}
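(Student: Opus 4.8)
The plan is to deduce the corollary almost immediately from the machinery already in place, by first converting sublinearity into the condition $(B_p)$ and then invoking the Carleson embedding results established above. The key observation is Remark~\ref{rem mesures}: any sublinear measure $\mu$ satisfies $(B_p)$ for every $p\in[1,+\infty)$, and in particular it satisfies $(B_1)$. Thus the whole scale of conditions $(B_p)$ is available to us at once, and the quantity $\sup_{n}\lambda_n\int_{[0,1)}t^{\lambda_n}\,d\mu$ is finite.

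For the range $q>1$, I would simply apply Theorem~\ref{thm Bp i mu q borne} with $p=1$. Since $\mu$ satisfies $(B_1)$ and $q>1$, that theorem guarantees that $\mu$ is a Carleson measure for $M_\Lambda^q$, together with the quantitative bound
$$\|i_\mu^q\|\leq C\Big(\sup\limits_{n\in\mathbb{N}}\lambda_n\int_{[0,1)}t^{\lambda_n}\,d\mu\Big)^{\frac{1}{q}},$$
where the supremum is finite precisely because $(B_1)$ holds.

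The only point that Theorem~\ref{thm Bp i mu q borne} does not reach is the endpoint $q=1$, since it produces Carleson embeddings only for exponents strictly larger than $p$. I would treat this case separately using Proposition~\ref{prop i mu 1 borne}: as $\mu$ satisfies $(B_1)$ and $\Lambda$ is lacunary, that proposition gives directly that $\mu$ is a Carleson measure for $M_\Lambda^1$. Combining the two cases then covers every $q\in[1,+\infty)$.

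There is accordingly no genuine obstacle here; the corollary is a packaging of Remark~\ref{rem mesures}, Proposition~\ref{prop i mu 1 borne} and Theorem~\ref{thm Bp i mu q borne}. The single subtlety worth flagging is the strict inequality $q>p$ in the hypothesis of the main theorem, which is exactly the reason the endpoint $q=1$ must be handled by the $p=1$ proposition rather than by the theorem itself.
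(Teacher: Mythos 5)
Your proof is correct and follows essentially the same route as the paper: the paper's own proof invokes Remark~4.2 to pass from sublinearity to $(B_1)$ and then applies Theorem~4.6 to get $\|i_\mu^q\|\lesssim\|\mu\|_S^{1/q}$, with the endpoint $q=1$ already settled in the text right after Proposition~4.4 exactly as you do. Your explicit flagging of the strict inequality $q>p$ and the separate treatment of $q=1$ is a slightly more careful write-up of the same argument, not a different one.
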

\begin{proof}
Remark~\ref{rem mesures} implies that the sublinear measures satisfy $(B_1)$, and we obtain:
$$\|i_\mu^q\|\lesssim \|\mu\|_S^{\frac{1}{q}}.$$
\end{proof}

The previous fact was proved for $p=2$ in \cite[Th.4.3]{NT}, and the authors announced the result for $p\in(1,2)$ (see \cite[Cor.5.2]{NT}). Unfortunately there is a gap in the proof of their interpolation result \cite[Th.5.1]{NT} : the interpolation is not easy to handle in M\"untz spaces because $f\in M_\Lambda^p$ does not imply that $|f|\in M_\Lambda^p$ in general.

Th.\ref{thm Bp i mu q borne} has the following interesting consequence.

\begin{cor}\label{cor Bp i mu q borne}
Let $\Lambda$ be a lacunary sequence and $p,q\in[1,+\infty)$ such that $p<q$. 
\begin{itemize}
\item[$(i)$] If $i_\mu^p$ is bounded, then $i_\mu^q$ is bounded.
\item[$(ii)$] The converse is false in general.
\end{itemize}
\end{cor}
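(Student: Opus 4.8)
For part $(i)$, the plan is to feed the hypothesis into the chain of implications already established. If $i_\mu^p$ is bounded, then by Remark~\ref{rem mesures}$(i)$ the measure $\mu$ satisfies $(B_p)$. Now $q>p$, so Theorem~\ref{thm Bp i mu q borne} applies \emph{directly}: a measure satisfying $(B_p)$ is a Carleson measure for $M_\Lambda^q$ whenever $q>p$, which is exactly the assertion that $i_\mu^q$ is bounded. Thus $(i)$ is immediate from the two results sitting just above it, and the only thing to check is that the hypotheses match --- namely that $\Lambda$ is lacunary (assumed) and $q>p$ (assumed). No calculation is needed.

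For part $(ii)$, the plan is to exhibit an explicit counterexample: a lacunary $\Lambda$ and a measure $\mu$ for which $i_\mu^q$ is bounded but $i_\mu^p$ is not, for some chosen pair $p<q$. The natural strategy is to construct $\mu$ as a discrete measure supported on a sequence $t_k \uparrow 1$ with weights tuned so that the condition $(B_q)$ holds while $(B_p)$ fails; since by Remark~\ref{rem mesures}$(i)$ boundedness of $i_\mu^p$ forces $(B_p)$, violating $(B_p)$ already guarantees $i_\mu^p$ is unbounded. Concretely I would place atoms at points $t_k$ adapted to the scale $1-\lambda_k^{-1}$ (so that $t_k^{\lambda_k}\approx e^{-1}$ is of constant order) and assign each atom a mass $m_k$; then $\int t^{p\lambda_n}\,d\mu$ and $\int t^{q\lambda_n}\,d\mu$ are governed by the diagonal terms $m_n$, and one tunes $m_n$ so that $\lambda_n \int t^{q\lambda_n}\,d\mu$ stays bounded while $\lambda_n \int t^{p\lambda_n}\,d\mu \to \infty$. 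Because $t^{p\lambda_n} \ge t^{q\lambda_n}$ pointwise, the mass $m_n$ contributes more to the $p$-integral than to the $q$-integral, and the idea is to exploit exactly this gap.

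The main obstacle will be to verify carefully that the constructed $\mu$ genuinely gives a \emph{bounded} embedding $i_\mu^q$ and not merely satisfies $(B_q)$. Indeed, Remark~\ref{rem mesures}$(i)$ shows $(B_q)$ is \emph{necessary} for boundedness of $i_\mu^q$, but it is not known to be sufficient in the general lacunary case (this is precisely the subtle point discussed in the text, where $(B_p)$ is only \emph{almost} sufficient). So I cannot conclude boundedness of $i_\mu^q$ from $(B_q)$ alone. The clean route around this is to arrange that $\mu$ satisfies a \emph{stronger} condition for the exponent $q$ --- for instance, by choosing a $q' < q$ and making $\mu$ satisfy $(B_{q'})$ with $q'<q$, Theorem~\ref{thm Bp i mu q borne} then yields boundedness of $i_\mu^q$ outright. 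Thus the precise construction should ensure $(B_{q'})$ for some $q'<q$ while simultaneously violating $(B_p)$; since $q'$ can be taken with $p<q'<q$, one needs the separation between the exponents to accommodate both requirements. Verifying this compatibility, and computing the two diagonal sums to confirm the boundedness of one integral and the divergence of the other, is the only genuinely computational part of the argument.
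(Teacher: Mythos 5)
Part $(i)$ of your plan is exactly the paper's proof: boundedness of $i_\mu^p$ forces $(B_p)$ by Remark~\ref{rem mesures}, and Theorem~\ref{thm Bp i mu q borne} then gives boundedness of $i_\mu^q$ for every $q>p$. Your overall strategy for $(ii)$ --- a discrete measure with atoms accumulating at $1$, tuned to violate $(B_p)$ while satisfying the moment condition at an intermediate exponent so that Theorem~\ref{thm Bp i mu q borne} applies --- is also the paper's strategy (Example~\ref{ex non borne compact}; Example~\ref{ex Pascal} is a variant), and you correctly identified the subtle point that $(B_q)$ alone is not known to imply boundedness of $i_\mu^q$. The paper handles it just as you suggest, in fact more strongly: it verifies $(b_{q'})$ for every $q'>p$ and invokes Th.~\ref{thm bp i mu q compact}, obtaining compactness (a fortiori boundedness) of $i_\mu^q$ for all $q>p$.

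However, your concrete parametrization fails. If the atoms sit at points $t_k$ with $1-t_k\approx \lambda_k^{-1}$, then $t_k^{s\lambda_k}\approx e^{-s}$ for every fixed exponent $s$, so the diagonal contributions to $\lambda_n\int t^{p\lambda_n}d\mu$ and $\lambda_n\int t^{q'\lambda_n}d\mu$ differ only by the constant factor $e^{q'-p}$: the gap $t_n^{(q'-p)\lambda_n}$ that you want to exploit tends to $0$ only when $(1-t_n)\lambda_n\to\infty$, which your placement rules out. Worse, the constraint is rigid: boundedness of the diagonal terms at exponent $q'$ forces $m_k\lesssim \lambda_k^{-1}$, and then all three pieces of the $p$-integral are automatically bounded --- the head $k<n$ because $(\lambda_n/\lambda_k)e^{-p\lambda_n/\lambda_k}$ is summable over a lacunary family of ratios, the diagonal trivially, and the tail $k>n$ because $\lambda_n\sum_{k>n}\lambda_k^{-1}$ is bounded by lacunarity. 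So with atoms at scale $1-\lambda_k^{-1}$, validity of $(B_{q'})$ for such a measure actually \emph{implies} $(B_p)$, defeating the construction; no tuning of the masses can save it. The fix is to push the atoms closer to $1$ by a slowly growing factor, as the paper does: take $x_k=1-\log(k)/\lambda_k$, so that $x_k^{\lambda_k}\sim 1/k$ and the diagonal term at exponent $s$ behaves like $\lambda_k c_k k^{-s}$, genuinely $s$-sensitive. With $c_k=k^p\log(k)/\lambda_k$ and $\lambda_n\geq n^{p+1}\lambda_{n-1}$ (super-lacunary growth, which crushes the off-diagonal terms), the diagonal at exponent $p$ is $\sim\log n\to+\infty$ while at any $q'>p$ it is $\sim \log(n)\,n^{p-q'}\to 0$, which is exactly the separation your plan requires.
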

\begin{proof}
If $i_\mu^p$ is bounded, then $\mu$ satisfies $(B_p)$. Th.\ref{thm Bp i mu q borne} imply that $i_\mu^q$ is bounded.
The point $(ii)$ is a consequence of the examples Ex.\ref{ex non borne compact} and Ex.\ref{ex Pascal} below.
\end{proof}

\begin{cor}\label{cor borne quasi geometric}
Let $q\in[1,+\infty)$ and let $\Lambda$ be a quasi-geometric sequence.  Then we have:
\begin{align*}
\|i_\mu^q\| &  \approx \sup\limits_{n}\Big(\int_{[0,1)}\lambda_nt^{q\lambda_n}d\mu\Big)^{\frac{1}{q}}  \approx \|\mu\|_S^{\frac{1}{q}} \\
&\approx \sup\limits_{n}\Big(\int_{[0,1)}\lambda_nt^{\lambda_n}d\mu\Big)^{\frac{1}{q}} \approx \sup\limits_nD_n(q)~~,
\end{align*}
where the underlying constants depend only on $q$ and $\Lambda.$

In particular, $\mu$ is a Carleson measure if and only if it is sublinear.
\end{cor}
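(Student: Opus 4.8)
The plan is to establish a chain of equivalences $\approx$ by closing a loop of inequalities, exploiting that for a quasi-geometric sequence we have sharp two-sided control via Lemma~\ref{lem 1 sur 1-t}. The spine of the argument is the factorization of Remark~\ref{rem factorization}, which already gives the upper bound $\|i_\mu^q\|\lesssim\sup_n D_n(q)$, so half of the work is to upper-bound $\sup_n D_n(q)$ by the sublinear norm $\|\mu\|_S^{1/q}$, and the other half is to lower-bound $\|i_\mu^q\|$ by the various suprema of the monomial integrals.

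First I would handle the quantity $\sup_n D_n(q)$. Since $\Lambda$ is quasi-geometric (hence lacunary), Lemma~\ref{lem 1 sur 1-t} applies with $\alpha=1/q'$ to give $\big(\sum_k\lambda_k^{1/q}t^{\lambda_k}\big)^{q-1}\approx(1-t)^{-1/q'}$; substituting this into the defining formula for $D_n(q)$ yields
$$
D_n(q)^q\approx\int_{[0,1)}\lambda_n^{\frac1q}t^{\lambda_n}(1-t)^{-\frac1{q'}}\,d\mu.
$$
To bound this by $\|\mu\|_S$, I would use that sublinearity controls $\int t^{q\lambda_n}\,d\mu\lesssim\|\mu\|_S\lambda_n^{-1}$ (Remark~\ref{rem mesures}(ii)), together with a Hölder split or a direct estimate of the weight $t^{\lambda_n}(1-t)^{-1/q'}$ against $t^{q\lambda_n}$ on the relevant range; the heuristic is that the integrand concentrates near $t\approx 1-\lambda_n^{-1}$, where $(1-t)^{-1/q'}\approx\lambda_n^{1/q'}$, matching the $(B_q)$ decay. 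Conversely, to get the lower bounds I would test $i_\mu^q$ on the single normalized monomials $f_n(t)=t^{\lambda_n}/\|t^{\lambda_n}\|_q$, which immediately gives $\|i_\mu^q\|^q\gtrsim\sup_n\lambda_n\int t^{q\lambda_n}\,d\mu$, and then invoke Remark~\ref{rem mesures}(iii) (valid precisely because $\Lambda$ is quasi-geometric) to convert the $(B_q)$ supremum into $\|\mu\|_S$.

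It remains to reconcile the two monomial expressions $\sup_n\big(\int\lambda_n t^{q\lambda_n}\,d\mu\big)^{1/q}$ and $\sup_n\big(\int\lambda_n t^{\lambda_n}\,d\mu\big)^{1/q}$. The inequality $t^{q\lambda_n}\le t^{\lambda_n}$ gives one direction for free; the reverse follows again from sublinearity, since $\int t^{\lambda_n}\,d\mu\lesssim\|\mu\|_S\lambda_n^{-1}\lesssim\int t^{q\lambda_n}\,d\mu$ up to the quasi-geometric constants, by the same concentration argument. Assembling these estimates closes the loop, and the final assertion that $\mu$ is Carleson iff sublinear is then immediate, since $\|i_\mu^q\|<\infty$ is equivalent to finiteness of $\|\mu\|_S$.

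I expect the main obstacle to be the upper estimate $\sup_n D_n(q)\lesssim\|\mu\|_S^{1/q}$: here one must integrate the genuinely singular weight $(1-t)^{-1/q'}$ against $\mu$ and show it is still controlled by sublinearity, which requires a careful dyadic or level-set decomposition of $[0,1)$ near the endpoint (summing the contributions $\mu([1-2^{-j},1-2^{-j-1}])\lesssim\|\mu\|_S 2^{-j}$ against the weight) rather than a one-line Hölder bound. All the lower bounds, by contrast, are soft consequences of testing on monomials.
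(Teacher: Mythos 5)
Your proposal is correct, and it closes the same loop of inequalities as the paper — factorization through $\ell^q(w)$ for the upper bound $\|i_\mu^q\|\lesssim\sup_n D_n(q)$, testing normalized monomials for $\|i_\mu^q\|^q\gtrsim\sup_n\lambda_n\int t^{q\lambda_n}d\mu$, and Remark~\ref{rem mesures}$(iii)$ to convert that supremum back into $\|\mu\|_S$ — but the central estimate $\sup_n D_n(q)\lesssim\|\mu\|_S^{1/q}$ is obtained by a genuinely different route. The paper simply specializes Lemma~\ref{lem Dn(q) borne} (already proved for Th.~\ref{thm Bp i mu q borne}) to $p=1$, getting $D_n(q)^q\lesssim\sup_k\lambda_k\int t^{\lambda_k}d\mu$, and then applies Remark~\ref{rem mesures}$(ii)$; this is a three-line deduction from existing material and passes through the intermediate quantity $\sup_n\big(\lambda_n\int t^{\lambda_n}d\mu\big)^{1/q}$ explicitly. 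You instead replace the inner sum by the kernel $(1-t)^{-1/q'}$ and estimate $\int\lambda_n^{1/q}t^{\lambda_n}(1-t)^{-1/q'}d\mu$ directly against sublinearity by a dyadic decomposition near $t=1$; your sketch does work: on $I_j=[1-2^{-j},1-2^{-j-1})$ one has $\mu(I_j)\le\|\mu\|_S 2^{-j}$, $t^{\lambda_n}\le e^{-\lambda_n2^{-j-1}}$ and $(1-t)^{-1/q'}\le 2^{(j+1)/q'}$, and splitting the sum over $j$ at $2^{j}\approx\lambda_n$ gives $D_n(q)^q\lesssim\|\mu\|_S$ uniformly in $n$. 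What each approach buys: yours is self-contained modulo Lemma~\ref{lem 1 sur 1-t} (only its majorization half is needed for this direction, which Remark~\ref{rem 1 sur 1-t} supplies for merely lacunary $\Lambda$), and the dyadic computation makes the concentration heuristic rigorous; the paper's route is shorter given what precedes and recycles an estimate already valid for general lacunary sequences, which is why the corollary's proof there is essentially a citation.

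Two slips to correct, neither fatal. First, Lemma~\ref{lem 1 sur 1-t} should be invoked with $\alpha=1/q$, not $\alpha=1/q'$: it gives $\sum_k\lambda_k^{1/q}t^{\lambda_k}\approx(1-t)^{-1/q}$, and raising to the power $q-1$ produces the exponent $1/q'$ in your displayed equivalence. Second, in the reconciliation step the chain $\int t^{\lambda_n}d\mu\lesssim\|\mu\|_S\lambda_n^{-1}\lesssim\int t^{q\lambda_n}d\mu$ is false pointwise in $n$: for $\mu=\delta_{1/2}$ one has $\lambda_n\int t^{q\lambda_n}d\mu\to0$ while $\|\mu\|_S>0$. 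Remark~\ref{rem mesures}$(iii)$ only yields $\|\mu\|_S\lesssim\sup_n\lambda_n\int t^{q\lambda_n}d\mu$, i.e.\ a comparison at the level of suprema. Since that is all you actually use to close the loop (the trivial direction $t^{q\lambda_n}\le t^{\lambda_n}$ and $\sup_n\lambda_n\int t^{\lambda_n}d\mu\lesssim\|\mu\|_S\lesssim\sup_n\lambda_n\int t^{q\lambda_n}d\mu$ trap both monomial suprema in the cycle), this is a matter of phrasing rather than substance, but the pointwise claim should be removed.
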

\begin{proof}
Since $\Lambda$ is lacunary, Remark \ref{rem mesures} and Lemma \ref{lem Dn(q) borne} give easily:
$$\|i_\mu^q\|\lesssim \sup\limits_nD_n(q)\lesssim \sup\limits_n\Big(\lambda_n\int_{[0,1)}t^{\lambda_n}d\mu\Big)^{\frac{1}{q}}\lesssim \|\mu\|_S^{\frac{1}{q}}~~.$$

On the other hand, since $\Lambda$ quasi-geometric, Remark~\ref{rem mesures} $(iii)$ gives: $$\displaystyle\|\mu\|_S\lesssim\sup\limits_{n}\int_{[0,1)}\lambda_nt^{q\lambda_n}d\mu\leq \|i_\mu^q\|^q~~.$$
\end{proof}

\section{Compactness and Schatten classes}

In this part we are interested in the compactness of the embedding 
$$i_\mu^p:\left\lbrace\begin{array}{ccc}
M_\Lambda^p & \longrightarrow & L^p(\mu)\\
f& \longmapsto &f
\end{array}\right.$$
where $\mu$ is a Carleson measure for $M_\Lambda^p$.

We turn to the investigation of its membership to various classes of operator ideals. We are mainly interested in compactness and Schatten classes (when $p=2$).

As in section 4, we denote $w_n=\lambda_n^{-1}$; we consider the operators $J_\Lambda$ and $T_\mu$ and the sequence $D_n(p)$ associated to this weight.

\begin{defin}\label{def mesures compact}
Let $p\in[1,+\infty)$. We say that:
\begin{enumerate}[(i)]
\item $\mu$ is {\em vanishing sublinear} when $\displaystyle\lim\limits_{\varepsilon\rightarrow 0}\frac{\mu([1-\varepsilon,1])}{\varepsilon}=0~~;$ 
\item $\mu$ {\em satisfies $(b_p)$} when we have:
\begin{align}\tag{$b_p$}
\lim\limits_{n\rightarrow+\infty}\lambda_n\int_{[0,1)}t^{p\lambda_n}d\mu=0.
\end{align}
\end{enumerate}
\end{defin}

\begin{rem}\label{rem mesures compact}
Let $\mu$ be a Carleson measure for $M_\Lambda^p$. We have:
\begin{enumerate}[(i)]
\item if $i_\mu^p$ is compact and $p>1$, then $\mu$ satisfies $(b_p)$. 

To prove this, we remark that for any $k\in\mathbb{N}$ we have $$\displaystyle\int_0^1t^{\lambda_n}\lambda_n^{\frac{1}{p}}t^{k}dt=\frac{\lambda_n^{\frac{1}{p}}}{\lambda_n+k+1}\rightarrow 0~~\text{ when }n\rightarrow+\infty\cdot$$
Thus, for any polynomial $g$ we have $\displaystyle\int_0^1t^{\lambda_n}\lambda_n^{\frac{1}{p}}g(t)dt\rightarrow 0$. Since $p>1$ the polynomials are dense in $L^{p'}$ and so $(\lambda_n^{\frac{1}{p}}t^{\lambda_n})_n$ converges weakly to 0 in $M_\Lambda^p.$
The embedding $i_\mu^p$ is compact and so $\|\lambda_n^{\frac{1}{p}}t^{\lambda_n}\|_{L^p(\mu)}\rightarrow 0$ when $n\rightarrow+\infty.$
\item For $p,q\in[1,+\infty)$ such that $p<q$, we have:
$$\mu\text{ is vanishing sublinear }\Rightarrow (b_p)\Rightarrow (b_q).$$
Indeed, assume that $\mu$ is vanishing sublinear. For any $\varepsilon>0$, there exists $\eta>0$ such that $\|\mu_{|[1-\eta,1)}\|_S\leq \varepsilon$. We have :
$$\lambda_n\int_{[0,1)}t^{p\lambda_n}d\mu\leq \lambda_n\eta^{p\lambda_n}\mu([0,1))+\lambda_n\int_{[1-\eta,1)}t^{p\lambda_n}d\mu.$$
The first term tends to 0 when $n\rightarrow+\infty$ and the second is less than $p^{-1}\|\mu\vert_{[1-\eta,1)}\|_S\leq \frac{\varepsilon}{p}$ thanks to Remark~\ref{rem mesures}$(ii)$.
\item These assumptions are all equivalent to each other when $\Lambda$ is a quasi-geometric sequence. More precisely, for $\varepsilon>0$ close to 0, we have:
$$\frac{\mu([1-\varepsilon,1))}{\varepsilon}\leq 3pR\int_{[0,1)}t^{p\lambda_n}d\mu$$
where $n$ is the index such that $\displaystyle\varepsilon\in\Big(\frac{1}{p\lambda_{n+1}},\frac{1}{p\lambda_n}\Big]$, and $R$ is a constant such that $\lambda_{k+1}\leq R\lambda_k$ for any $k\in\mathbb{N}.$ We obtain that $\mu$ is vanishing sublinear in this case.
\end{enumerate}
\end{rem}
\medskip

\subsection{The case $p=1$.}~~\\

For $p=1$, when $i_\mu^1$ compact, $\mu$ still satisfies $(b_1)$ but the method to prove it is not the same as for $p>1$. 

\begin{prop}\label{prop p=1 b1 compact}
Let $\Lambda$ be a lacunary sequence.
The following are equivalent:
\begin{enumerate}[(i)]
\item $\mu$ satisfies $(b_1)$ ;
\item $i_\mu^1$ is compact ;
\item $i_\mu^1$ is weakly compact.
\end{enumerate}
\end{prop}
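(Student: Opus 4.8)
The plan is to establish the cycle $(i)\Rightarrow(ii)\Rightarrow(iii)\Rightarrow(i)$. The implication $(ii)\Rightarrow(iii)$ is immediate, since every compact operator is weakly compact. So the genuine content lies in $(i)\Rightarrow(ii)$ and, above all, in $(iii)\Rightarrow(i)$.

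For $(i)\Rightarrow(ii)$ I would simply read off the essential-norm estimate from Section~2. The point is that when $p=1$ the exponent $p-1$ vanishes, so the inner sum disappears and $D_n(1)=\lambda_n\int_{[0,1)}t^{\lambda_n}\,d\mu$; the condition $(b_1)$ then says exactly that $D_n(1)\to0$. In particular $(D_n(1))_n$ is bounded, so Prop.\ref{prop an(T) Dn}$(iii)$ gives $\|T_\mu\|_e\le\limsup_n D_n(1)=0$, whence $T_\mu$ is compact. Factorizing $i_\mu^1=T_\mu\circ J_\Lambda^{-1}$ as in Remark~\ref{rem factorization} (legitimate because $\Lambda$ is lacunary, so $J_\Lambda$ is an isomorphism), we conclude that $i_\mu^1$ is compact.

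The heart of the matter is $(iii)\Rightarrow(i)$, and this is where the argument used for $p>1$ in Remark~\ref{rem mesures compact}$(i)$ breaks down: in the non-reflexive setting of $L^1$ the normalized monomials are not weakly null, so one cannot extract the $(b_1)$ condition from weak-to-norm convergence as before. Instead I would invoke the Dunford--Pettis characterization of relative weak compactness in $L^1(\mu)$. Consider the normalized monomials $g_n=(\lambda_n+1)t^{\lambda_n}$, which satisfy $\|g_n\|_{M_\Lambda^1}=\|g_n\|_{1}=1$ and thus lie in $B_{M_\Lambda^1}$. If $i_\mu^1$ is weakly compact, then $\{g_n\}_n\subset i_\mu^1(B_{M_\Lambda^1})$ is a relatively weakly compact subset of $L^1(\mu)$, hence uniformly $\mu$-integrable. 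Since $\lambda_n\to+\infty$, we have $g_n(t)\to0$ for every $t\in[0,1)$, so $g_n\to0$ $\mu$-almost everywhere; Vitali's convergence theorem then yields $\int_{[0,1)}g_n\,d\mu\to0$, that is, $(\lambda_n+1)\int_{[0,1)}t^{\lambda_n}\,d\mu\to0$, which is precisely $(b_1)$.

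The only delicate step is therefore this passage from weak compactness to uniform integrability, together with the verification that Vitali's theorem applies; everything else is bookkeeping. It is exactly this exploitation of the Dunford--Pettis property of $L^1(\mu)$ that substitutes for the reflexivity argument available when $p>1$, and it explains why, as noted in the text, ``the method to prove it is not the same as for $p>1$.''
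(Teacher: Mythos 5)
Your proposal is correct and follows essentially the same route as the paper: $(i)\Rightarrow(ii)$ via the factorization $i_\mu^1=T_\mu\circ J_\Lambda^{-1}$ together with $D_n(1)=\lambda_n\int_{[0,1)}t^{\lambda_n}\,d\mu\to0$ and Prop.\ref{prop an(T) Dn}, and $(iii)\Rightarrow(i)$ via the Dunford--Pettis theorem giving uniform integrability of the normalized monomials in $L^1(\mu)$. The only cosmetic difference is that you package the final step as pointwise convergence plus Vitali's convergence theorem, whereas the paper carries out the equivalent estimate by hand, splitting $\int_{[0,1)}\lambda_n t^{\lambda_n}\,d\mu$ at a point $s<1$ with $\mu([s,1))$ small and using $\lambda_n s^{\lambda_n}\to0$.
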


\begin{xrem}
Actually the implications $(ii)\Rightarrow(iii)\Rightarrow(i)$ are valid for any $L^1$-M\"untz space, without any assumption of lacunarity for $\Lambda$.

On another hand we can point out that, without any special assumption of lacunarity on $\Lambda$, the embedding $i_\mu^1$ is a Dunford-Pettis operator ({\em i.e.} maps a weakly convergent sequence into a norm-convergent sequence) if and only if $i_\mu^1$ is compact. This is due to the fact that $M_\Lambda^1$ has the Schur property since it is isomorphic to a subspace of $\ell^1$ (see \cite{We}, see also \cite{G} for some extensions of this result).
\end{xrem}
\begin{proof}
Let us prove that $(i)\Rightarrow(ii)$. Since $\Lambda$ is lacunary, we can factorize $i_\mu^1$ through $\ell^1(w)$ as in the proof of Th.\ref{thm Bp i mu q borne}: we have $i_\mu^1=T_\mu\circ J_\Lambda^{-1}$. On the other hand, $\mu$ satisfies $(b_1)$, so we have $D_n(1)=\lambda_n\displaystyle\int_{[0,1)}t^{\lambda_n}d\mu\rightarrow 0$ when $n\rightarrow +\infty.$ Prop.\ref{prop an(T) Dn} implies that $a_n(T_\mu)\rightarrow 0$ and we get $a_n(i_\mu^1)\rightarrow 0$ when $n\rightarrow+\infty.$

$(ii)\Rightarrow (iii)$ is obvious. 

$(iii)\Rightarrow(i)$. Assume now that $i_\mu^1$ is weakly compact. We denote $H=\lbrace\lambda_nt^{\lambda_n}\rbrace\subset L^1(\mu)$ and we fix $\eps\in(0,1)$. Since $H$ is bounded in $L^1(\mu)$ and weakly relatively compact, $H$ is uniformly integrable (see \cite[Th.III.C.12 p.137]{Wo}). This means that for any $\varepsilon>0$, there exists $\delta>0$ such that for any $n\in\mathbb{N}$ and any measurable set $A\subset[0,1)$ with $\mu(A)\leq \delta$, we have 
$$\int_A \lambda_nt^{\lambda_n}d\mu\leq \varepsilon.$$
Since $\mu(\lbrace 1\rbrace)=0$, there exists $s\in(0,1)$ such that $\mu([s,1))\leq \eta$. We have 
\begin{align*}
\int_{[0,1)}\lambda_n t^{\lambda_n}d\mu &=\int_{[0,s)}\lambda_n t^{\lambda_n}d\mu+\int_{[s,1]}\lambda_n t^{\lambda_n}d\mu\\
&\leq \lambda_ns^{\lambda_n}\mu([0,1))+\varepsilon.
\end{align*}
and since $\lambda_ns^{\lambda_n}\rightarrow 0$ when $n\rightarrow+\infty$ we obtain that $\mu$ satisfies $(b_1)$.
\end{proof}

\medskip

\subsection{The case $p>1$.}~~\\

Let us mention without proof  the next remark (the argument is the same as in Lemma \ref{lem schatten} below, but we shall not use this result in the general case).

\begin{rem} Let $\Lambda$ be a quasi-geometric sequence. There exist an integer $K\ge1$ and $C$ depending only on $\Lambda$ such that for any $n\in\mathbb{N}$ we have:
$$C\lambda_{n+K}\int_{[0,1)}t^{\lambda_{n+K}}d\mu \leq \lambda_n\int_{[0,1)}t^{p\lambda_n}d\mu\leq D_n(p)^p.$$
\end{rem}

We first give a first easy sufficient condition to ensure compactness. This is closely linked to the rough sufficient condition to ensure the boundedness of $i_\mu^p$ stated in Remark~\ref{rem bornebrut}

\begin{prop}\label{prop orderbounded}
Let $\Lambda$ be a quasi-geometric sequence. 

The Carleson embedding $i_\mu^p$ is order bounded if and only if $\dis\int_{[0,1)}\frac{d\mu}{ 1-t}\;dt<\infty. $
\end{prop}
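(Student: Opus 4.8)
The plan is to characterize order boundedness of $i_\mu^p$ by identifying the smallest dominating function $h$. Recall that $i_\mu^p$ is order bounded means there is a positive $h\in L^p(\mu)$ with $|f(t)|\le h(t)$ for $\mu$-a.e.\ $t$ and every $f$ in the unit ball of $M_\Lambda^p$. The natural candidate for the pointwise supremum over the unit ball is the norm of the point evaluation functional: since $|f(t)|\le\|\delta_t\|_{(M_\Lambda^p)^\ast}\|f\|_p$, the function $t\mapsto\|\delta_t\|_{(M_\Lambda^p)^\ast}$ is the minimal dominating function. By Proposition~\ref{prop evalponct}, for a quasi-geometric $\Lambda$ we have $\|\delta_t\|_{(M_\Lambda^p)^\ast}\approx(1-t)^{-1/p}$.

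\textbf{The forward direction.} First I would show that if $i_\mu^p$ is order bounded, then the dominating function $h$ must control the point evaluations: for $\mu$-almost every $t$ and every $f$ in the unit ball, $|f(t)|\le h(t)$, hence $\|\delta_t\|_{(M_\Lambda^p)^\ast}\le h(t)$ by taking the supremum over a countable norming family of test functions (the monomials span a separable space, so the supremum over a countable dense subset of the unit ball equals the functional norm $\mu$-a.e.). Then $\int_{[0,1)}\|\delta_t\|^p_{(M_\Lambda^p)^\ast}\,d\mu\le\int_{[0,1)}h^p\,d\mu<\infty$, and by Proposition~\ref{prop evalponct} this gives $\int_{[0,1)}(1-t)^{-1}\,d\mu<\infty$.

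\textbf{The converse direction.} Conversely, suppose $\int_{[0,1)}(1-t)^{-1}\,d\mu<\infty$. I would take $h(t)=C(1-t)^{-1/p}$ where $C$ is the constant from the upper bound in Proposition~\ref{prop evalponct}, so that $h\in L^p(\mu)$ precisely because $\int h^p\,d\mu=C^p\int(1-t)^{-1}\,d\mu<\infty$. For any $f\in B_{M_\Lambda^p}$ and any $t\in[0,1)$ we have $|f(t)|=|\delta_t(f)|\le\|\delta_t\|_{(M_\Lambda^p)^\ast}\le h(t)$, which is exactly order boundedness. Here $h$ is genuinely a positive function in $L^p(\mu)$ dominating the whole unit ball.

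\textbf{The main obstacle} is the measurability and the a.e.\ identification of $t\mapsto\|\delta_t\|_{(M_\Lambda^p)^\ast}$ as the pointwise supremum in the forward direction: I must ensure that the essential supremum of $|f(t)|$ over the (uncountable) unit ball coincides $\mu$-a.e.\ with the functional norm, which requires reducing to a countable dense subset and using separability of $M_\Lambda^p$ together with the continuity (in $t$) of each $f$. Once this reduction is in place, both directions are clean applications of Proposition~\ref{prop evalponct}, and the quasi-geometric hypothesis is used only through the two-sided estimate $\|\delta_t\|_{(M_\Lambda^p)^\ast}\approx(1-t)^{-1/p}$ provided there.
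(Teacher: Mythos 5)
Your proof is correct and takes essentially the same approach as the paper: the paper's proof likewise observes that, by separability of $M_\Lambda^p$, order boundedness of $i_\mu^p$ is equivalent to $t\mapsto\sup_{f\in B_{M_\Lambda^p}}|f(t)|=\|\delta_t\|_{(M_\Lambda^p)^\ast}$ belonging to $L^p(\mu)$, and then concludes via the two-sided estimate of Prop.\ref{prop evalponct}. Your write-up simply makes explicit the countable-dense-subset argument that the paper compresses into the single word ``separable''.
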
 
Point out that the previous integral condition is then sufficient to ensure that $i_\mu^p$ is a $p$-summing operator, hence compact from $M_\Lambda^p$ to $L^p(\mu)$.

\begin{proof}
Since the space $M_\Lambda^p$ is separable, $i_\mu^p$ is order bounded if and only if $\dis t\mapsto\sup_{f\in B_{M_\Lambda^p}}|f(t)|$ belongs to $L^p(\mu)$. Now, the estimation on the point evaluation (see Prop.\ref{prop evalponct}) gives the conclusion.
\end{proof}

In the same spirit than the boundedness problem, we can ``almost" characterize the compactness of $i_\mu^q$ for $q>1$, by testing the monomials.

\begin{thm}\label{thm bp i mu q compact}
Let $\Lambda$ be a lacunary sequence. Assume that $\mu$ satisfies $(b_p)$ for some $p\in[1,+\infty)$. Then $i_\mu^q$ is compact for any $q>p.$
\end{thm}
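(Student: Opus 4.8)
The plan is to deduce the compactness of $i_\mu^q$ from the factorization $i_\mu^q=T_\mu\circ J_\Lambda^{-1}$ of Remark~\ref{rem factorization}, by showing that the defining sequence $(D_n(q))_n$ tends to $0$. Indeed, once $D_n(q)\to 0$ we have $\limsup_n D_n(q)=0$, so Proposition~\ref{prop an(T) Dn}$(iii)$ gives $\|T_\mu\|_e=0$; thus $T_\mu:\ell^q(w)\to L^q(\mu)$ is compact, and composing on the right with the bounded operator $J_\Lambda^{-1}$ (bounded because $\Lambda$ is lacunary, by the Gurariy--Macaev Theorem~\ref{thm gurariy}) yields the compactness of $i_\mu^q$.

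The first step is to make sure Lemma~\ref{lem Dn(q) borne} is available. Writing $M_k=\lambda_k\int_{[0,1)}t^{p\lambda_k}\,d\mu$, the hypothesis $(b_p)$ says precisely that $M_k\to 0$; in particular $M:=\sup_k M_k<+\infty$, so $\mu$ satisfies $(B_p)$ and the assumptions of Theorem~\ref{thm Bp i mu q borne} are met. For the fixed $q>p$, Lemma~\ref{lem Dn(q) borne} then supplies a constant $C=C(p,q,r)$ with
$$D_n(q)^q\le C\Big(\sup_{k\ge n}M_k\Big)^{\frac{1}{p}}\,M^{\frac{1}{p'}}.$$

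It remains to let $n\to\infty$. Since $M_k\to 0$, the tail suprema $\sup_{k\ge n}M_k$ decrease to $\limsup_k M_k=0$, while the factor $M^{1/p'}$ is a fixed finite constant; hence $D_n(q)\to 0$, which is exactly the input needed above. (Equivalently, one may combine Proposition~\ref{prop an(T) Dn}$(i)$ with Remark~\ref{rem factorization} to obtain $a_{n+1}(i_\mu^q)\lesssim D_n(q)^\ast\to 0$, so that the approximation numbers of $i_\mu^q$ vanish in the limit, which again forces compactness.)

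In this argument there is essentially no obstacle left: all the analytic work — splitting the sum defining $D_n(q)$ into the ranges $k\le n$ and $k>n$ and estimating each piece with H\"older's inequality, the triangle inequality, and the $r$-lacunarity of $\Lambda$ — has already been carried out in Lemma~\ref{lem Dn(q) borne}. The only genuinely new point is the elementary observation that the stronger gain $M_k\to 0$ (rather than mere boundedness of $M_k$) propagates through that estimate to give $D_n(q)\to 0$, thereby upgrading the boundedness statement of Theorem~\ref{thm Bp i mu q borne} into a compactness statement. The strict inequality $q>p$ is used exactly as it was in Lemma~\ref{lem Dn(q) borne}; the borderline case $q=p$ is not reached by this method and would need a separate treatment.
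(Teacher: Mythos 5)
Your proof is correct and takes essentially the same route as the paper's own: both factorize $i_\mu^q=T_\mu\circ J_\Lambda^{-1}$ as in Remark~\ref{rem factorization}, feed the hypothesis $(b_p)$ into Lemma~\ref{lem Dn(q) borne} to obtain $D_n(q)\to 0$, and conclude compactness via Proposition~\ref{prop an(T) Dn}. Your only addition is to make explicit why the tail supremum $\sup_{k\ge n}\lambda_k\int_{[0,1)}t^{p\lambda_k}\,d\mu\to 0$ propagates through the lemma's estimate, a step the paper leaves implicit.
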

\begin{proof}
Since $\Lambda$ is lacunary, we can factorize $i_\mu^q$ through $\ell^q(w)$ as in Remark \ref{rem factorization}: $i_\mu^q=T_\mu\circ J_\Lambda^{-1}$ (recall that $J_\Lambda$ is an isomorphism). Prop.\ref{prop an(T) Dn} gives:
$$\|i_\mu^q\|_e\lesssim \|T_\mu\|_e\leq \limsup_{n\rightarrow+\infty}D_n(q)~~.$$
Since $\mu$ satisfies $(b_p)$, Lemma~\ref{lem Dn(q) borne} implies that $D_n(q)\rightarrow 0$ when $n\rightarrow+\infty$ and so $i_\mu^q$ is compact.
\end{proof}

\begin{cor}\label{cor bp i mu q compact}
Let $\Lambda$ be a lacunary sequence and $p,q\in[1,+\infty)$ such that $p<q$. 
\begin{enumerate}[(i)] 
\item If $i_\mu^p$ is compact, then $i_\mu^q$ is compact.
\item The converse is false in general.
\item If $\mu$ is vanishing sublinear, $i_\mu^p$ is compact.
\end{enumerate}
\end{cor}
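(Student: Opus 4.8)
The plan is to route all three parts through the single quantitative condition $(b_p)$, so that the already-established Theorem~\ref{thm bp i mu q compact} does the real work, while the compactness criteria at the two ends of the scale, $p=1$ and $p>1$, are supplied separately by Proposition~\ref{prop p=1 b1 compact} and Remark~\ref{rem mesures compact}. No new computation should be needed.

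For $(i)$, I would first deduce from the compactness of $i_\mu^p$ that $\mu$ satisfies $(b_p)$. A compact operator is bounded, so $\mu$ is in particular a Carleson measure for $M_\Lambda^p$ and the hypotheses of Remark~\ref{rem mesures compact} are met. When $p>1$, part $(i)$ of that remark gives exactly $(b_p)$; when $p=1$, the implication $(ii)\Rightarrow(i)$ of Proposition~\ref{prop p=1 b1 compact} gives $(b_1)$. In either case $\mu$ satisfies $(b_p)$, and applying Theorem~\ref{thm bp i mu q compact} with this exponent $p$ yields that $i_\mu^q$ is compact for every $q>p$, which is the claim.

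For $(iii)$, I would start from the observation that a vanishing sublinear measure satisfies $(b_1)$: this is the first implication of Remark~\ref{rem mesures compact}$(ii)$ read with the exponent $1$, and its proof there is already valid for every exponent $\ge 1$. Then I split according to the value of $p$. If $p=1$, the equivalence $(i)\Leftrightarrow(ii)$ in Proposition~\ref{prop p=1 b1 compact} turns $(b_1)$ into the compactness of $i_\mu^1$. If $p>1$, I would instead invoke Theorem~\ref{thm bp i mu q compact} with the exponent $1$ playing the role of its ``$p$'' and our $p$ playing the role of its ``$q$''; since $1<p$ the theorem gives directly that $i_\mu^p$ is compact. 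Thus $i_\mu^p$ is compact for the prescribed exponent.

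Finally, $(ii)$ is a non-implication, so it requires only a counterexample, which I would defer to the explicit constructions in Examples~\ref{ex non borne compact} and~\ref{ex Pascal}, where a lacunary $\Lambda$ and a measure $\mu$ are built so that $i_\mu^q$ is compact for the larger exponent while $i_\mu^p$ fails to be so (indeed fails even to be bounded) for the smaller one. The only genuinely delicate point in the whole argument is the boundary case $p=1$: Remark~\ref{rem mesures compact}$(i)$ is stated only for $p>1$, so the passage from compactness of $i_\mu^1$ to $(b_1)$ cannot use it and must instead go through the weak-compactness and uniform-integrability route encapsulated in Proposition~\ref{prop p=1 b1 compact}. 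Everything else is a direct assembling of facts proved earlier.
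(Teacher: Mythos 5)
Your proposal is correct and follows essentially the same route as the paper: deduce $(b_p)$ from compactness (via Remark~\ref{rem mesures compact}$(i)$ for $p>1$ and Proposition~\ref{prop p=1 b1 compact} for $p=1$), apply Theorem~\ref{thm bp i mu q compact} for $(i)$ and, with exponent $1$, for $(iii)$, and cite Examples~\ref{ex non borne compact} and~\ref{ex Pascal} for $(ii)$. The only difference is that you spell out the boundary case $p=1$, which the paper's one-line proof leaves implicit.
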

\begin{proof}
If $i_\mu^p$ is compact, then $\mu$ satisfies $(b_p)$ and since $\Lambda$ is lacunary, Th.\ref{thm bp i mu q compact} gives that $i_\mu^q$ is compact.
The point $(ii)$ is a consequence of Example~\ref{ex non borne compact} or Example \ref{ex Pascal} below. At last $(iii)$ holds since any vanishing sublinear measure satisfies $(b_1).$
\end{proof}

\begin{cor}\label{cor compact quasi geometric}
Let $q\in[1,+\infty)$ and let $\Lambda$ be a quasi-geometric sequence. Assume that $\mu$ is a Carleson measure of $M_\Lambda^q$. Then we have:
\begin{align*}
\|i_\mu^q\|_e &  \approx \limsup\limits_{n}\Big(\int_{[0,1)}\lambda_nt^{\lambda_n}d\mu\Big)^{\frac{1}{q}}  \approx \Big(\limsup\limits_{\varepsilon\rightarrow 0}\frac{\mu([1-\varepsilon,1)}{\varepsilon}\Big)^{\frac{1}{q}}
 \approx \limsup\limits_{n\rightarrow+\infty}D_n(q)~~,
\end{align*}
where the underlying constants depend only on $q$ and $\Lambda.$

In particular, $i_\mu^q$ is compact if and only if $\mu$ is vanishing sublinear.
\end{cor}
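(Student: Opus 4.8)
The plan is to combine the general lacunary estimate of Theorem~\ref{thm Bp i mu q borne} (and its compact analogue Theorem~\ref{thm bp i mu q compact}) with the extra regularity coming from the quasi-geometric hypothesis, exactly as Corollary~\ref{cor borne quasi geometric} upgraded the boundedness estimate to an $\approx$. The skeleton is a chain of estimates linking the four quantities, where two of them (the upper bounds) come from the factorization $i_\mu^q=T_\mu\circ J_\Lambda^{-1}$ and Proposition~\ref{prop an(T) Dn}$(iii)$, and the other two (the lower bounds) come from testing individual monomials together with Remark~\ref{rem mesures}$(iii)$. The final ``in particular'' statement then follows by observing that the middle quantity $\limsup_\varepsilon \mu([1-\varepsilon,1))/\varepsilon$ vanishes precisely when $\mu$ is vanishing sublinear, by Definition~\ref{def mesures compact}$(i)$.

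First I would establish the upper chain. Since $\Lambda$ is quasi-geometric it is in particular lacunary, so the factorization of Remark~\ref{rem factorization} gives $\|i_\mu^q\|_e\lesssim\|T_\mu\|_e\leq\limsup_n D_n(q)$ via Proposition~\ref{prop an(T) Dn}$(iii)$. To control $\limsup_n D_n(q)$ from above by $\limsup_n\big(\lambda_n\int t^{\lambda_n}d\mu\big)^{1/q}$, I would invoke Lemma~\ref{lem Dn(q) borne} with $p=1$: it yields
\begin{align*}
D_n(q)^q\lesssim M_n^{\frac{1}{1}}\approx\sup_{k\geq n}M_k,\qquad M_k=\lambda_k\int_{[0,1)}t^{\lambda_k}d\mu,
\end{align*}
so that taking $\limsup$ in $n$ collapses $\sup_{k\geq n}M_k$ to $\limsup_k M_k$. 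The passage from $\int t^{\lambda_n}d\mu$ to $\int t^{q\lambda_n}d\mu$ is free up to constants because on a quasi-geometric sequence Remark~\ref{rem mesures}$(ii)$--$(iii)$ show the conditions $(B_1)$ and $(B_q)$ are comparable; more directly one uses that $\mu$ is sublinear iff it satisfies $(b_1)$ iff $(b_q)$ for quasi-geometric $\Lambda$. Finally the identification $\limsup_n\big(\lambda_n\int t^{\lambda_n}d\mu\big)^{1/q}\approx\big(\limsup_\varepsilon\mu([1-\varepsilon,1))/\varepsilon\big)^{1/q}$ is exactly the content of Remark~\ref{rem mesures compact}$(iii)$, which gives a two-sided comparison between $\mu([1-\varepsilon,1))/\varepsilon$ and $\int t^{p\lambda_n}d\mu$ for $\varepsilon\in(1/(p\lambda_{n+1}),1/(p\lambda_n)]$.

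For the lower bound I would test the embedding on the normalized monomials. Since $J_\Lambda$ is an isomorphism by Gurariy--Macaev (Theorem~\ref{thm gurariy}), the functions $f_n=\lambda_n^{1/q}t^{\lambda_n}$ form (up to equivalent weights) a normalized sequence in $M_\Lambda^q$ converging weakly to $0$, so $\|i_\mu^q\|_e\gtrsim\limsup_n\|f_n\|_{L^q(\mu)}=\limsup_n\big(\lambda_n\int t^{q\lambda_n}d\mu\big)^{1/q}$. This closes the loop and turns the chain of $\lesssim$ into $\approx$. The main obstacle I anticipate is bookkeeping the direction of the comparisons: the lower bound naturally produces $\int t^{q\lambda_n}d\mu$ while the upper bound via Lemma~\ref{lem Dn(q) borne} produces $\int t^{\lambda_n}d\mu$, and one must use the quasi-geometric hypothesis (through the two-sided estimate of Remark~\ref{rem mesures compact}$(iii)$) to see these are comparable — this is precisely where a merely lacunary sequence would fail and only one inequality would survive. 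For the concluding equivalence, $i_\mu^q$ is compact iff $\|i_\mu^q\|_e=0$ iff the common $\limsup$ vanishes iff $\lim_{\varepsilon\to0}\mu([1-\varepsilon,1))/\varepsilon=0$, that is, iff $\mu$ is vanishing sublinear.
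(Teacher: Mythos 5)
Your upper chain coincides with the paper's: the factorization $i_\mu^q=T_\mu\circ J_\Lambda^{-1}$ together with Prop.~\ref{prop an(T) Dn}$(iii)$ gives $\|i_\mu^q\|_e\lesssim\limsup_n D_n(q)$, and Lemma~\ref{lem Dn(q) borne} applied with $p=1$ is legitimate here because the Carleson hypothesis gives $(B_q)$ and, $\Lambda$ being quasi-geometric, Remark~\ref{rem mesures}$(iii)$ upgrades that to sublinearity, hence $(B_1)$. Two small points of bookkeeping: the lemma requires $q>p=1$, but for $q=1$ one has $D_n(1)=\lambda_n\int t^{\lambda_n}d\mu$ outright, so the upper chain survives; and at $p=1$ the lemma actually yields $D_n(q)^q\lesssim M_n+\sup_{k\geq n}M_k$ rather than your garbled ``$M_n^{1/1}\approx\sup_{k\geq n}M_k$'' --- harmless after taking $\limsup$. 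Your minoration, on the other hand, is genuinely different from the paper's. You test the embedding on the normalized monomials $f_n=\lambda_n^{1/q}t^{\lambda_n}$, which (for $q>1$, by Remark~\ref{rem mesures compact}$(i)$) are weakly null, to get $\|i_\mu^q\|_e\gtrsim\limsup_n\big(\lambda_n\int t^{q\lambda_n}d\mu\big)^{1/q}$, and you close the loop with Remark~\ref{rem mesures compact}$(iii)$ (whose printed statement is missing a factor $\lambda_n$ on the right-hand side; the corrected form, parallel to Remark~\ref{rem mesures}$(iii)$, is indeed what you need, and the quasi-geometric constant $R$ enters exactly there, as you say). The paper instead uses the restriction formula $\|i_\mu^q\|_e=\lim_n\|i^q_{\mu'_n}\|$ with $\mu'_n=\mu\vert_{[1-\frac{1}{n},1)}$ (from \cite{CFT}, adapted in \cite{NT}) and applies the lower bound $\|i_\nu^q\|\gtrsim\|\nu\|_S^{1/q}$ of Cor.~\ref{cor borne quasi geometric} to the restrictions. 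Your route is more self-contained, avoiding the imported essential-norm formula; the paper's route buys uniformity in $q$, which is precisely what you are missing.

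The genuine gap is at $q=1$, which the statement includes. Your lower-bound argument needs $(f_n)$ weakly null in $M_\Lambda^q$, and this fails for $q=1$: $M_\Lambda^1$ has the Schur property (see the remark following Prop.~\ref{prop p=1 b1 compact}), so a weakly null sequence would be norm null, whereas $\|\lambda_n t^{\lambda_n}\|_1\to1$; concretely, $\lambda_n t^{\lambda_n}\,dt\to\delta_1$ weak-star, so testing against a continuous $g\in L^\infty$ produces the limit $g(1)\neq0$ in general. Nor can you substitute Prop.~\ref{prop p=1 b1 compact}, which gives only the qualitative equivalence (compactness iff $(b_1)$), not the quantitative minoration $\|i_\mu^1\|_e\gtrsim\limsup_n\lambda_n\int_{[0,1)} t^{\lambda_n}d\mu$ that the corollary asserts. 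As written, your proof establishes the corollary for $q\in(1,+\infty)$ only; to cover $q=1$ you must fall back on the paper's restriction-measure argument (which is insensitive to $q$) or supply some other replacement for the weakly-null testing step.
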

\begin{proof}
We already saw in Lemma~\ref{lem Dn(q) borne} that:
$$\|i_\mu^q\|_e\lesssim\limsup\limits_{n\rightarrow+\infty}D_n(q)\lesssim\limsup\limits_{n}\Big(\int_{[0,1)}\lambda_nt^{\lambda_n}d\mu\Big)^{\frac{1}{q}}\lesssim \Big(\limsup\limits_{\varepsilon\rightarrow 0}\frac{\mu([1-\varepsilon,1)}{\varepsilon}\Big)^{\frac{1}{q}}~~,$$
this part only requires the lacunarity assumption on $\Lambda$.

To get the minoration of $\|i_\mu^q\|_e$ we use \cite[Th.3.5]{CFT} : they proved that
$$\|i_\mu^1\|_e=\lim\limits_{n\rightarrow +\infty}\|i_{\mu_n'}^1\|$$
where $\mu'_n$ is the restriction $\mu\vert_{[1-\frac{1}{n},1)}.$
The proof can be easily adapted for $q>1$ as it was noticed in \cite[Prop.2.6]{NT} and we have $$\|i_\mu^q\|_e=\lim\limits_{n\rightarrow+\infty}\|i_{\mu'_n}^q\|.$$
Since $\Lambda$ is quasi-geometric, Cor.\ref{cor borne quasi geometric} gives that there is a constant $C>0$ such that for any measure $\nu$ on $[0,1)$ we have: $\|i_\nu^q\|\geq C\|\nu\|_S^{\frac{1}{q}}.$ We have:
$$\|i_\mu^q\|_e=\lim\limits_{n\rightarrow+\infty}\|i_{\mu_n'}^q\|\geq C\lim\limits_{n\rightarrow+\infty}\|\mu\vert_{[1-\frac{1}{n},1)}\|_S^{\frac{1}{q}}=\Big(\limsup_{\varepsilon\rightarrow 0}\frac{\mu([1-\varepsilon,1)}{\varepsilon}\Big)^{\frac{1}{q}}\cdot$$
\end{proof}

The following result is an improvement of \cite[Prop.3.2]{CFT}. The result requires no assumption on the lacunarity of $\Lambda$ but a strong assumption on $\mu$.

\begin{prop}\label{prop nucleaire}
If  Supp($\mu$) is included in a compact set of $[0,1)$, then $i_\mu^p$ is a nuclear operator.
\end{prop}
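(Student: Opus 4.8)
The plan is to exhibit an \emph{explicit} rank-one decomposition of $i_\mu^p$ and to estimate its nuclear norm term by term; this keeps the argument self-contained and, in particular, free of any lacunarity hypothesis. By the support assumption there is $s\in(0,1)$ with $\text{Supp}(\mu)\subseteq[0,s]$. By the Clarkson--Erd\"os theorem each $f\in M_\Lambda^p$ has a unique representation $f=\sum_n a_nt^{\lambda_n}$, pointwise on $[0,1)$, so the coefficient functionals $L_n\colon f\mapsto a_n$ are well defined. Writing $g_n=t^{\lambda_n}$ viewed as an element of $L^p(\mu)$, we formally have $i_\mu^p=\sum_n L_n\otimes g_n$, a series of rank-one operators, and therefore
\[
\|i_\mu^p\|_{\mathcal N}\le\sum_n\|L_n\|\,\|g_n\|_{L^p(\mu)}.
\]
The whole point is to show that this series converges (which will also justify the decomposition itself).

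The two factors pull in opposite directions. The easy one comes from the compact support: since $\mu$ lives on $[0,s]$,
\[
\|g_n\|_{L^p(\mu)}^p=\int_{[0,1)}t^{p\lambda_n}\,d\mu\le s^{p\lambda_n}\,\mu([0,1)),
\]
so that $\|g_n\|_{L^p(\mu)}\le\mu([0,1))^{1/p}s^{\lambda_n}$ decays geometrically in $\lambda_n$. The delicate factor is the growth of $\|L_n\|$, and this is where the real work lies. I would fix an intermediate radius $s_1\in(s,1)$ and combine two ingredients: first, the point evaluations are uniformly bounded on $[0,s_1]$, i.e. $\sup_{t\le s_1}|f(t)|\lesssim\|f\|_p$; second, a generalized Cauchy-type coefficient estimate for M\"untz series, valid under the standing gap condition $\inf_n(\lambda_{n+1}-\lambda_n)>0$. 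Together these should yield a bound of the form $\|L_n\|\le C(s_1)\,s_1^{-\lambda_n}$ with $C(s_1)$ independent of $n$. \textbf{Establishing this uniform coefficient estimate is the main obstacle}, since for non-integer exponents the usual Cauchy inequality is not directly available and one must appeal to the quantitative content of Clarkson--Erd\"os (for instance via the M\"untz--Legendre biorthogonal system or a Mellin/contour extraction of the coefficients).

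Granting the coefficient bound, the two estimates combine to give $\|L_n\|\,\|g_n\|_{L^p(\mu)}\lesssim(s/s_1)^{\lambda_n}$. Since $s<s_1$ and $\lambda_n\to+\infty$ (indeed $\lambda_n\gtrsim n$ by the gap condition), the series $\sum_n(s/s_1)^{\lambda_n}$ converges. This does double duty: it shows $\sum_n|a_n|s^{\lambda_n}\lesssim\|f\|_p\sum_n(s/s_1)^{\lambda_n}<\infty$, so that $\sum_n a_nt^{\lambda_n}$ converges uniformly on $[0,s]$ and hence in $L^p(\mu)$, which legitimizes the identity $i_\mu^p=\sum_n L_n\otimes g_n$; and it bounds $\|i_\mu^p\|_{\mathcal N}$, proving that $i_\mu^p$ is nuclear.

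Finally, it is worth noting that when $\Lambda$ happens to be lacunary the hardest step can be bypassed entirely: the factorization $i_\mu^p=T_\mu\circ J_\Lambda^{-1}$ of Remark~\ref{rem factorization}, together with the nuclear bound of Prop.\ref{prop an(T) Dn}$(iv)$, gives at once
\[
\|i_\mu^p\|_{\mathcal N}\le\|J_\Lambda^{-1}\|\sum_{n\ge0}\lambda_n^{\frac1p}\,\|t^{\lambda_n}\|_{L^p(\mu)}\le\|J_\Lambda^{-1}\|\,\mu([0,1))^{\frac1p}\sum_{n\ge0}\lambda_n^{\frac1p}s^{\lambda_n},
\]
which is finite. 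The direct argument above is designed precisely to remove the lacunarity used in this shortcut.
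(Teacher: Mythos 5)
Your proposal is correct and follows essentially the same route as the paper: there too, $i_\mu^p$ is decomposed as $\sum_n e_n^\ast\otimes g_n$ with $e_n^\ast$ the coefficient functionals, $\|g_n\|_{L^p(\mu)}$ is bounded by $\mu([0,1))^{1/p}\delta^{\lambda_n}$ using the compact support, and one sums a geometric series. The coefficient bound you single out as the main obstacle is precisely what the paper imports as a known quantitative Clarkson--Erd\"os estimate \cite[Prop.6.2.2]{GL}: for $\varepsilon>0$ with $(1+\varepsilon)\delta<1$ there is $K_\varepsilon$ such that $|a_n|\leq K_\varepsilon(1+\varepsilon)^{\lambda_n}\|f\|_p$, which is exactly your $\|L_n\|\leq C(s_1)s_1^{-\lambda_n}$ with $s_1=(1+\varepsilon)^{-1}$, valid under the standing gap condition and with no lacunarity required.
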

\begin{proof}
Assume that $Supp(\mu)\subset [0,\delta]$ with $\delta<1$. We fix $\varepsilon>0$ such that $(1+\varepsilon)\delta <1$. 
Since $\Lambda$ satisfies the gap condition, we have the following classical estimation essentially done in 
\cite[Prop.6.2.2]{GL}:
there exists $K_\varepsilon$ such that for any M\"untz polynomial $f(t)=\sum\limits_ka_kt^{\lambda_k}$, we have $$|a_n|\leq K_\varepsilon(1+\varepsilon)^{\lambda_n}\|f\|_{p}~~.$$
This implies that the functionals $e_n^\ast:\left\lbrace \begin{array}{ccc}
M_\Lambda^p &\longrightarrow &\mathbb{C} \\
\sum\limits_k a_kt^{\lambda_k} & \longmapsto & a_n
\end{array}\right.$ are well defined, bounded, and we have $\|e_n^\ast\|\leq K_\varepsilon(1+\varepsilon)^{\lambda_n}~~.$

We define $g_n:[0,1)\rightarrow \mathbb{C}$ by $g_n(t)=t^{\lambda_n}.$ The functions $(g_n)_n$ belong to $L^p(\mu)$ and we have $\|g_n\|_{L^p(\mu)}\leq \mu([0,1))\delta^{\lambda_n}$. On the other hand,
for any M\"untz polynomial $f$, we have 
$i_\mu^pf=\sum\limits_{k}e_k(f)y_k$. So $i_\mu^p$ and $\sum\limits_{k\in\mathbb{N}}e_k^{\ast}\otimes y_k$ coincide on a dense set of $M_\Lambda^p$. 
Moreover, we have $$\sum\|e_k^{\ast}\otimes y_k\|\leq K_\varepsilon\mu([0,1))\sum\limits_k\Big(\delta(1+\varepsilon)\Big)^{\lambda_k}<+\infty$$
so $i_\mu^p$ is a nuclear operator.
\end{proof}
\medskip

\subsection{The case $p=2$.}~~\\ 

Now on we focus on the hilbertian framework.
\begin{lem}\label{lem schatten}
Let $\Lambda$ be a quasi-geometric sequence and $\mu$ such that $i_\mu^2$ is bounded.
\begin{enumerate}[(i)]
\item There exist an integer $K\ge1$ and $C>0$ depending only on $\Lambda$ such that for any $n\in\mathbb{N}$ we have:
$$C\lambda_{n+K}\int_{[0,1)}t^{\lambda_{n+K}}d\mu \leq \lambda_n\int_{[0,1)}t^{2\lambda_n}d\mu\leq D_n(2)^2.$$
\item For any $q\in(0,+\infty)$, we have:
$$\|(D_n(2))_n\|_{\ell^q}\approx \Big\|\Big(\displaystyle\lambda_n\int_{[0,1)}t^{2\lambda_n}d\mu\Big)^{\frac{1}{2}}_n\Big\|_{\ell^q}\approx \Big\|\Big(\displaystyle\lambda_n\int_{[0,1)}t^{\lambda_n}d\mu\Big)^{\frac{1}{2}}_n\Big\|_{\ell^q}$$
in the sense that these quantities are equivalent, up to constants depending only on $\Lambda$ and $q$.
\end{enumerate}
\end{lem}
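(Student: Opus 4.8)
The plan is to treat the two assertions in turn, exploiting that for $p=2$ the quantity $D_n(2)^2$ expands, by Tonelli's theorem, as the nonnegative double sum
\begin{equation*}
D_n(2)^2=\sum_{k\ge0}(\lambda_n\lambda_k)^{\frac12}\int_{[0,1)}t^{\lambda_n+\lambda_k}\,d\mu\,.
\end{equation*}
For the right inequality in $(i)$ I would simply retain the diagonal term $k=n$, which already equals $\lambda_n\int t^{2\lambda_n}d\mu$, all other terms being nonnegative. For the left inequality I would fix an integer $K$ with $r^K\ge2$; since $\Lambda$ is $r$-lacunary this forces $\lambda_{n+K}\ge 2\lambda_n$, hence $t^{\lambda_{n+K}}\le t^{2\lambda_n}$ on $[0,1)$ and $\int t^{\lambda_{n+K}}d\mu\le\int t^{2\lambda_n}d\mu$; the quasi-geometric upper bound $\lambda_{n+K}\le R^K\lambda_n$ then yields the claim with $C=R^{-K}$.

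For $(ii)$, write $b_n=\lambda_n\int t^{2\lambda_n}d\mu$ and $c_n=\lambda_n\int t^{\lambda_n}d\mu$, so the three sequences under study are $(D_n(2))_n$, $(b_n^{1/2})_n$ and $(c_n^{1/2})_n$. Assertion $(i)$ gives $b_n\le D_n(2)^2$, hence $\|(b_n^{1/2})\|_{\ell^q}\le\|(D_n(2))\|_{\ell^q}$, while $t^{2\lambda_n}\le t^{\lambda_n}$ gives the trivial $b_n\le c_n$, hence $\|(b_n^{1/2})\|_{\ell^q}\le\|(c_n^{1/2})\|_{\ell^q}$. The first substantial step is the reverse bound $\|(c_n^{1/2})\|_{\ell^q}\lesssim\|(b_n^{1/2})\|_{\ell^q}$: the left inequality in $(i)$ controls the tail via $\sum_{m\ge K}c_m^{q/2}\le C^{-q/2}\sum_{n\ge0}b_n^{q/2}$, and the finitely many initial terms $c_0,\dots,c_{K-1}$ are handled by Cor.\ref{cor borne quasi geometric} (with exponent $2$), which for quasi-geometric $\Lambda$ gives $\sup_n c_n^{1/2}\approx\sup_n b_n^{1/2}\le\|(b_n^{1/2})\|_{\ell^q}$. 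Together these yield $\|(b_n^{1/2})\|_{\ell^q}\approx\|(c_n^{1/2})\|_{\ell^q}$.

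It then remains to bound $\|(D_n(2))\|_{\ell^q}$ by $\|(c_n^{1/2})\|_{\ell^q}$, after which all three quantities are equivalent by chaining the inequalities above. Splitting the double sum at $k=n$ and using $t^{\lambda_n+\lambda_k}\le t^{\lambda_n}$ for $k\le n$ and $t^{\lambda_n+\lambda_k}\le t^{\lambda_k}$ for $k>n$, together with the lacunary estimates $\sum_{k\le n}\lambda_k^{1/2}\lesssim\lambda_n^{1/2}$ and $(\lambda_n/\lambda_k)^{1/2}\le r^{-(k-n)/2}$, I would obtain
\begin{equation*}
D_n(2)^2\lesssim c_n+\sum_{k>n}r^{-\frac{k-n}{2}}c_k\,.
\end{equation*}
The second term is the convolution of $(c_k)_k$ with the summable geometric kernel $(r^{-j/2})_{j\ge1}$, so passing to $\ell^{q/2}$ reduces everything to a discrete Young--Hardy inequality: for $q\ge2$ this is Minkowski's inequality in $\ell^{q/2}$, and for $q<2$ one uses instead the $s$-triangle inequality $\|\sum_j w_j\|_{\ell^s}^s\le\sum_j\|w_j\|_{\ell^s}^s$ valid for $s=q/2<1$. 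In both cases the constant depends only on $r$ and $q$, giving $\sum_n D_n(2)^q\lesssim\sum_n c_n^{q/2}$.

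The main obstacle is this last step: the decomposition of $D_n(2)^2$ must be organised so that the off-diagonal part genuinely appears as a geometric convolution, and the passage to $\ell^q$ must cover the whole range $q\in(0,\infty)$, in particular the quasi-normed regime $q<2$ where only the $s$-triangle inequality is available. The bookkeeping of the finitely many initial terms in the bound $\|(c_n^{1/2})\|_{\ell^q}\lesssim\|(b_n^{1/2})\|_{\ell^q}$ is a minor but necessary point, resolved by invoking the already established $\ell^\infty$-level comparison of Cor.\ref{cor borne quasi geometric}.
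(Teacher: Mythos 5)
Your proposal is correct and follows essentially the same route as the paper's proof: retain the diagonal term $k=n$ of the double sum for the right inequality in $(i)$, shift indices by a $K$ with $r^K\geq 2$ (plus the quasi-geometric bound $\lambda_{n+K}\leq R^K\lambda_n$) for the left one, and dominate $D_n(2)^2$ by $M_n(1-r^{-1/2})^{-1}+\sum_{k>n}r^{-(k-n)/2}M_k$ before summing in $\ell^{q/2}$. The only minor differences are cosmetic: for $q\geq 2$ the paper passes the triangular kernel through the Schur test whereas you use Young/Minkowski for convolution with an $\ell^1$ kernel (for $q<2$ both arguments use the same $s$-subadditivity with $s=q/2$), and you make explicit the closing bookkeeping --- chaining the three norms and controlling the $K$ initial terms of $\big\|\big(M_n^{1/2}\big)\big\|_{\ell^q}$ via the $\ell^\infty$-level comparison of Cor.~\ref{cor borne quasi geometric} --- which the paper's written proof leaves implicit.
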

\begin{proof}
For $n\in\mathbb{N}$ we have $$D_n(2)^2=\sum\limits_{k\in\mathbb{N}}(\lambda_n\lambda_k)^{\frac{1}{2}}\int_{[0,1)}t^{\lambda_n+\lambda_k}d\mu\geq \lambda_n\int_{[0,1)}t^{2\lambda_n}d\mu$$
since this last term is the term $n=k$ in the sum. On the other hand, we assume that $\Lambda$ is $r$-lacunary. There exists $K\in\mathbb{N}$ such that $r^K\geq 2$ and since $\Lambda$ is quasi-geometric, there exists $R\in\mathbb{R}$ such that $\lambda_{k+1}\leq R\lambda_k$ for any $k$. We obtain:
\begin{align*}
\lambda_{n+K}\int_{[0,1)}t^{\lambda_{n+K}}d\mu\leq R^K\lambda_n\int_{[0,1)}t^{r^K\lambda_n}d\mu\lesssim \lambda_n\int_{[0,1)}t^{2\lambda_n}d\mu
\end{align*}
and we obtain $(i)$.

For $k\in\mathbb{N}$ we shall denote $M_k=\displaystyle\lambda_k\int_{[0,1)}t^{\lambda_k}d\mu$. Assume that the sequence $(M_k^{\frac{1}{2}})_k\in\ell^q$. We compare $\|D_n(2)\|_{\ell^q} $ and $\|M_n^{\frac{1}{2}}\|$ and  shall, in some sense,  improve the estimation of Lemma~\ref{lem Dn(q) borne}.
For $n\in\mathbb{N},$ we have:
\begin{align*}
D_n(2)^2 &=\sum\limits_{k\leq n}(\lambda_n\lambda_k)^{\frac{1}{2}}\int_{[0,1)}t^{\lambda_n+\lambda_k}d\mu+\sum\limits_{k>n}(\lambda_n\lambda_k)^{\frac{1}{2}}\int_{[0,1)}t^{\lambda_n+\lambda_k}d\mu\\
&\leq \sum\limits_{k\leq n}(\lambda_n\lambda_k)^{\frac{1}{2}}\frac{M_n}{\lambda_n}+\sum\limits_{k>n}(\lambda_n\lambda_k)^{\frac{1}{2}}\frac{M_k}{\lambda_k}\\
&\leq M_n\frac{1}{1-\frac{1}{\sqrt{r}}}+\sum\limits_{k>n}M_k\frac{1}{\sqrt{r}^{k-n}}\cdot
\end{align*}
The number $D_n(2)^2$ is less than the $n$-th entry of the vector $A[(M_k)_k]$, where $A=(A_{n,k})_{n,k}$ is the matrix defined by 
$$A_{n,k}=\left\lbrace \begin{array}{ll}
0 &\text{ if } k<n\\
(1-r^{-\frac{1}{2}})^{-1} &\text{ if }k=n\\
\frac{1}{\sqrt{r}^{k-n}} &\text{ if }k>n.
\end{array}\right.
$$
Assume first that $q\geq 2$. 
Since $A$ satisfies $$\displaystyle\sup\limits_{n}\sum\limits_{k}A_{n,k}\leq \frac{2}{1-\frac{1}{\sqrt{r}}}~~\text{and}~~\displaystyle\sup\limits_{k}\sum\limits_{n}A_{n,k}\leq \frac{2}{1-\frac{1}{\sqrt{r}}},$$ we can apply the Schur lemma: $A$ defines a bounded operator $A:\ell^{\frac{q}{2}}\rightarrow\ell^{\frac{q}{2}}$ and we have $\displaystyle\|A\|_{\frac{q}{2}}\leq \frac{2}{1-\frac{1}{\sqrt{r}}}\cdot$ In particular, for $(M_k)_k\in \ell^{\frac{q}{2}}$ we obtain $$\|(D_n(2))\|_{\ell^q}\leq \frac{2}{1-\frac{1}{\sqrt{r}}}\|(M_k)^{\frac{1}{2}}\|_{\ell^q}.$$

Now we treat the case $q<2$. Since $\displaystyle\frac{q}{2}<1$, we have
\begin{align*}
D_n(2)^q &\leq \Big( M_n\frac{1}{1-\frac{1}{\sqrt{r}}}+\sum\limits_{k>n}M_k\frac{1}{\sqrt{r}^{k-n}}\Big)^{\frac{q}{2}}\\
&\leq \Big(M_n^{\frac{1}{2}}\Big)^q\frac{1}{(1-\frac{1}{\sqrt{r}})^\frac{q}{2}}+\sum\limits_{k>n}\Big(M_k^{\frac{1}{2}}\Big)^{q}\frac{1}{r^{\frac{q(k-n)}{4}}}
\end{align*}
And we get:
\begin{align*}
\sum\limits_{n\in\mathbb{N}}D_n(2)^q&\leq \sum\limits_n \Big(M_n^{\frac{1}{2}}\Big)^{q}\Big(\frac{1}{1-\frac{1}{\sqrt{r}}}\Big)^{\frac{q}{2}}+ \sum\limits_{k\in\mathbb{N}}\Big(M_k^{\frac{1}{2}}\Big)^q\sum\limits_{n=0}^{k-1}\Big(\frac{1}{r}\Big)^{\frac{q(k-n)}{4}}\\
&\lesssim\|M_n^{\frac{1}{2}}\|_{\ell^q}^q
\end{align*}
where the underlying constants depend on $r$ and $q$ only. 
\end{proof}

\begin{thm}
Let $\Lambda$ be a lacunary sequence and $q>0$. We have
\begin{enumerate}[(i)] 
\item If  $(D_n(2))_n\in\ell^q$ then we have:$$\|i_\mu^2\|_{\mathcal{S}^q}\lesssim \|D_n(2)\|_{\ell^q}~~;$$
\item If moreover we assume that $\Lambda$ is quasi-geometric, and $q\geq 2$, then we have:
$$\|i_\mu^2\|_{\mathcal{S}^q}\approx \|D_n(2)\|_{\ell^q}~~,$$
where the underlying constants depend only on $q$ and $\Lambda$.
\end{enumerate}
\end{thm}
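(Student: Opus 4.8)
The plan is to prove the two bounds separately: $(i)$ comes almost for free from Sections 2 and 3, while the real content of $(ii)$ is a diagonal estimate for Schatten norms.

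For $(i)$, I would start from the factorization of Remark~\ref{rem factorization}: as $\Lambda$ is lacunary, $i_\mu^2=T_\mu\circ J_\Lambda^{-1}$ with $J_\Lambda:\ell^2(w)\to M_\Lambda^2$ an isomorphism by Gurariy-Macaev (Th.\ref{thm gurariy}). All three spaces $\ell^2(w)$, $M_\Lambda^2\subset L^2(m)$ and $L^2(\mu)$ are Hilbert, so the Schatten scale applies, and since $\mathcal{S}^q$ is an operator ideal one has $\|i_\mu^2\|_{\mathcal{S}^q}\leq\|J_\Lambda^{-1}\|\,\|T_\mu\|_{\mathcal{S}^q}$. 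Proposition~\ref{prop an(T) Dn}$(v)$ bounds $\|T_\mu\|_{\mathcal{S}^q}\leq\|(D_n(2))_n\|_{\ell^q}$, and as $\|J_\Lambda^{-1}\|$ depends only on $\Lambda$ this is exactly $(i)$.

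For $(ii)$, combining with $(i)$ it remains to establish the reverse estimate $\|(D_n(2))_n\|_{\ell^q}\lesssim\|i_\mu^2\|_{\mathcal{S}^q}$ when $\Lambda$ is quasi-geometric and $q\geq2$ (we may assume $i_\mu^2\in\mathcal{S}^q$, else there is nothing to prove). First I would invoke Lemma~\ref{lem schatten}$(ii)$ to replace $\|(D_n(2))_n\|_{\ell^q}$ by the comparable quantity $\big\|\big(\lambda_n\int_{[0,1)}t^{2\lambda_n}\,d\mu\big)^{1/2}_n\big\|_{\ell^q}$. The role of the weight $w_n=\lambda_n^{-1}$ is that the vectors $\tilde e_n=\lambda_n^{1/2}e_n$ form an orthonormal basis of $\ell^2(w)$, and $\|T_\mu\tilde e_n\|_{L^2(\mu)}^2=\lambda_n\int_{[0,1)}t^{2\lambda_n}\,d\mu$; so the quantity to control is precisely $\big\|(\|T_\mu\tilde e_n\|)_n\big\|_{\ell^q}$.

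The crux, which I expect to be the main obstacle, is the diagonal estimate: for $q\geq2$, any $S\in\mathcal{S}^q$ between Hilbert spaces and any orthonormal basis $(\tilde e_n)$ of the source satisfy $\sum_n\|S\tilde e_n\|^q\leq\|S\|_{\mathcal{S}^q}^q$. I would prove it from the Schmidt decomposition $S=\sum_k s_k\langle\cdot,u_k\rangle v_k$ (with $(u_k),(v_k)$ orthonormal and $\sum_k s_k^q=\|S\|_{\mathcal{S}^q}^q$): setting $a_{nk}=|\langle\tilde e_n,u_k\rangle|^2$ one has $\|S\tilde e_n\|^2=\sum_k s_k^2a_{nk}$, where $\sum_k a_{nk}\leq1$ (Bessel) and $\sum_n a_{nk}=1$ (Parseval). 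Since $q/2\geq1$, Jensen's inequality for the convex map $x\mapsto x^{q/2}$ against the sub-probability weights $(a_{nk})_k$ gives $\|S\tilde e_n\|^q\leq\sum_k s_k^q a_{nk}$, and summing over $n$ yields the bound. It is exactly here that $q\geq2$ is used — the convexity of $x\mapsto x^{q/2}$ orients Jensen correctly, and the argument breaks for $q<2$. Applying this to $S=T_\mu=i_\mu^2\circ J_\Lambda$ and using the ideal property once more gives $\big\|(\|T_\mu\tilde e_n\|)_n\big\|_{\ell^q}\leq\|T_\mu\|_{\mathcal{S}^q}\leq\|J_\Lambda\|\,\|i_\mu^2\|_{\mathcal{S}^q}$; chaining with Lemma~\ref{lem schatten}$(ii)$ completes the lower bound, hence the equivalence.
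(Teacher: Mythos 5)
Your proof is correct, and its skeleton matches the paper's: part $(i)$ is the same factorization $i_\mu^2=T_\mu\circ J_\Lambda^{-1}$ of Remark~\ref{rem factorization} combined with Prop.\ref{prop an(T) Dn}$(v)$, and in part $(ii)$ both arguments come down to testing the Schatten norm on the normalized monomials $\lambda_n^{\frac{1}{2}}t^{\lambda_n}$ and then invoking Lemma~\ref{lem schatten}$(ii)$. The one genuine difference is how the key lower bound is justified. The paper applies the cited result \cite[Th.4.7 p.82]{DJT} directly to the Riesz basis $(\lambda_n^{\frac{1}{2}}t^{\lambda_n})_n$ of $M_\Lambda^2$ (Gurariy--Macaev), whereas you transfer the problem to $\ell^2(w)$, where these vectors pull back to the genuine orthonormal basis $\tilde e_n=\lambda_n^{\frac{1}{2}}e_n$, and you prove the diagonal estimate $\sum_n\|S\tilde e_n\|^q\leq\|S\|_{\mathcal{S}^q}^q$ from scratch: your Schmidt-plus-Jensen computation is sound, since Bessel gives $\sum_k a_{nk}\leq 1$, Parseval (completeness of $(\tilde e_n)$) gives $\sum_n a_{nk}=1$, and convexity of $x\mapsto x^{\frac{q}{2}}$ for $q\geq 2$ orients the inequality correctly, while the ideal property $\|T_\mu\|_{\mathcal{S}^q}\leq\|J_\Lambda\|\,\|i_\mu^2\|_{\mathcal{S}^q}$ exactly absorbs the Riesz-basis constant that the citation hides. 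What your route buys is self-containedness and a transparent view of where $q\geq 2$ is used (the direction of Jensen); what the paper's citation buys is brevity and a statement applying to arbitrary Riesz bases without routing through the factorization. The remaining steps --- identifying $\|T_\mu\tilde e_n\|_{L^2(\mu)}^2=\lambda_n\int_{[0,1)}t^{2\lambda_n}\,d\mu$, reducing to it via Lemma~\ref{lem schatten}$(ii)$, and noting that $i_\mu^2\in\mathcal{S}^q$ forces $T_\mu\in\mathcal{S}^q$ so the Schmidt decomposition exists --- are all in order.
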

\begin{proof}
As in Remark~\ref{rem factorization}, since $\Lambda$ is lacunary we can factorize $i_\mu^2$ through $\ell^2(w)$, and we get $a_n(i_\mu^2)\lesssim a_n(T_\mu)$ and  Prop.\ref{prop an(T) Dn} gives $$\sum\limits_{n}(a_n(i_\mu^2))^q\lesssim \sum\limits_{n}D_n(2)^q~~.$$ 

Assume now that $q\geq 2$. As a direct consequence of \cite[Th.4.7 p.82]{DJT}, we obtain that for any Riesz basis $(f_n)_n$ of $M_\Lambda^2$, there exists a constant $C>0$ such that $$\|i_\mu^2\|_{\mathcal{S}^q}\geq C \Big(\sum\limits_n\|f_n\|_{L^2(\mu)}^q\Big)^{\frac{1}{q}}~~.$$

The theorem of Gurariy-Macaev says exactly that the sequence $(f_n)_n=(\lambda_n^{\frac{1}{2}}t^{\lambda_n})_n$ is a Riesz basis of $M_\Lambda^2$, and we obtain:
\begin{align*}
\|i_\mu^2\|_{\mathcal{S}^q}^q &\gtrsim \sum\limits_n\Big(\lambda_n\int_{[0,1)}t^{2\lambda_n}d\mu\Big)^{\frac{q}{2}}
\end{align*}
and Lemma~\ref{lem schatten} gives the result.
\end{proof}

We also have an integral expression for $\|i_\mu^2\|_{\mathcal{S}^q}$.

\begin{prop}\label{prop integrales}
Assume that $\Lambda$ is quasi-geometric and $q\geq 2.$ We have:
$$\|i_\mu^2\|_{\mathcal{S}^q}\approx \Big(\int_0^1 \Big(\int_{[0,1)}\frac{d\mu(t)}{(1-st)^{\frac{2}{q}+1}}\Big)^{\frac{q}{2}}ds\Big)^{\frac{1}{q}}~~. $$
\end{prop}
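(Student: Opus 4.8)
The plan is to reduce the statement to the already-established discrete expression for $\|i_\mu^2\|_{\mathcal{S}^q}$ and then to recognize the right-hand integral as an $L^{q/2}$-norm of a M\"untz function, to which the Gurariy-Macaev theorem applies. Write $M_n=\lambda_n\int_{[0,1)}t^{\lambda_n}\,d\mu$ as in Lemma~\ref{lem schatten}. Combining the preceding theorem (valid for $q\ge 2$ and $\Lambda$ quasi-geometric) with Lemma~\ref{lem schatten}$(ii)$, we already know that
$$\|i_\mu^2\|_{\mathcal{S}^q}^q\approx\|D_n(2)\|_{\ell^q}^q\approx\sum_{n\ge0}M_n^{\frac{q}{2}},$$
so it suffices to prove that $\dis\sum_{n}M_n^{q/2}\approx\int_0^1\Big(\int_{[0,1)}\frac{d\mu(t)}{(1-st)^{\frac{2}{q}+1}}\Big)^{q/2}ds$, all underlying constants being allowed to depend on $q$ and $\Lambda$.

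First I would linearize the inner integral. Since $\frac{2}{q}+1>0$ and $\Lambda$ is quasi-geometric, Lemma~\ref{lem 1 sur 1-t} applied in the variable $u=st\in[0,1)$ gives
$$\frac{1}{(1-st)^{\frac{2}{q}+1}}\approx\sum_{n\ge0}\lambda_n^{\frac{2}{q}+1}(st)^{\lambda_n},$$
with constants independent of $s$ and $t$. Integrating this two-sided estimate against $d\mu(t)$ and using Tonelli together with $\int_{[0,1)}t^{\lambda_n}d\mu=M_n/\lambda_n$, I obtain
$$\int_{[0,1)}\frac{d\mu(t)}{(1-st)^{\frac{2}{q}+1}}\approx\sum_{n\ge0}\lambda_n^{\frac{2}{q}+1}s^{\lambda_n}\int_{[0,1)}t^{\lambda_n}d\mu=\sum_{n\ge0}\lambda_n^{\frac{2}{q}}M_n\,s^{\lambda_n}.$$

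It then remains to raise this pointwise equivalence to the power $q/2$ and integrate over $s\in[0,1]$, which preserves the two-sided bound. The key observation is that, setting $b_n=\lambda_n^{2/q}M_n\ge0$, the resulting quantity is exactly
$$\int_0^1\Big(\sum_{n\ge0}b_n s^{\lambda_n}\Big)^{q/2}ds=\Big\|\sum_{n\ge0}b_n s^{\lambda_n}\Big\|_{q/2}^{q/2},$$
and since $q/2\ge1$ and $\Lambda$ is lacunary, the Gurariy-Macaev theorem (Th.~\ref{thm gurariy}) with exponent $q/2$ yields
$$\Big\|\sum_{n}b_n s^{\lambda_n}\Big\|_{q/2}^{q/2}\approx\sum_{n\ge0}\frac{b_n^{q/2}}{\lambda_n}=\sum_{n\ge0}\frac{\lambda_n M_n^{q/2}}{\lambda_n}=\sum_{n\ge0}M_n^{q/2},$$
because $b_n^{q/2}=\lambda_n M_n^{q/2}$. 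Chaining the three equivalences gives the claim.

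The only genuinely delicate point is the linearization step: it crucially uses the \emph{full} two-sided estimate of Lemma~\ref{lem 1 sur 1-t}, and hence the quasi-geometric hypothesis (lacunarity alone, via Remark~\ref{rem 1 sur 1-t}, would provide only the upper bound, hence only one inequality in the final statement). Everything else is bookkeeping of nonnegative quantities, for which Tonelli's theorem legitimizes all interchanges of sum and integral, and the restriction $q\ge 2$ is exactly what makes $q/2\ge1$, so that Gurariy-Macaev can be invoked in $L^{q/2}$.
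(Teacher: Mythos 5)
Your proof is correct and follows essentially the same route as the paper's: both reduce to $\sum_n M_n^{q/2}$ via the preceding theorem and Lemma~\ref{lem schatten}, then trade the discrete sum for an $L^{q/2}$-norm of a M\"untz series through Gurariy--Macaev and Lemma~\ref{lem 1 sur 1-t}, with Tonelli justifying the interchanges. The only (harmless) difference is that by working with $M_n=\lambda_n\int t^{\lambda_n}d\mu$ rather than $\lambda_n\int t^{2\lambda_n}d\mu$ you linearize the kernel $(1-st)^{-\frac{2}{q}-1}$ directly and thereby skip the paper's final elementary comparison $(1-st)\leq(1-st^2)\leq 2(1-st)$.
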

 \begin{proof}
We denote $M_n=\lambda_n\displaystyle\int_{[0,1)}t^{2\lambda_n}d\mu$. The previous estimation gives:  
\begin{align*}
\|i_\mu^2\|_{\mathcal{S}^q} \approx \Big(\sum\limits_nM_n^{\frac{q}{2}}\Big)^{\frac{1}{q}}
=\|(M_n)_n\|_{\ell^{\frac{q}{2}}}^{\frac{1}{2}}~~.
\end{align*}
On the other hand we can apply the theorem of Gurariy-Macaev to estimate an equivalent of $\|(M_n)\|_{\ell^{\frac{q}{2}}}$. We obtain, using Lemma~\ref{lem 1 sur 1-t},

\begin{align*}
\|i_\mu^2\|_{\mathcal{S}^q} &\approx\Big\|\sum\limits_{n}M_n\lambda_n^{\frac{2}{q}}s^{\lambda_n}\Big\|_{L^{\frac{q}{2}}(ds)}^{\frac{1}{2}}
=\Big(\int_0^1 \Big(\sum\limits_{n}\lambda_n\displaystyle\int_{[0,1)}t^{2\lambda_n}d\mu(t)\lambda_n^{\frac{2}{q}}s^{\lambda_n}\Big)^{\frac{q}{2}}ds \Big)^{\frac{1}{q}}
\\
&=\Big(\int_0^1 \Big(\int_{[0,1)}\sum\limits_{n}\lambda_n^{\frac{2}{q}+1}(st^2)^{\lambda_n}d\mu(t)\Big)^{\frac{q}{2}}ds\Big)^{\frac{1}{q}}\\
&\approx\Big(\int_0^1 \Big(\int_{[0,1)}\frac{d\mu(t)}{(1-st^2)^{\frac{2}{q}+1}}\Big)^{\frac{q}{2}}ds\Big)^{\frac{1}{q}}~~.
\end{align*} 
We get the result since $(1-st)\leq (1-st^2)\leq(1+st)(1-st)\le 2(1-st)$ for $s,t\in[0,1].$
 \end{proof}

Note that the previous criterion is the same for any sequence $\Lambda$ which is quasi-geometric.
In particular, we have a characterization of the Hilbert-Schmidt embeddings.

\begin{thm}\label{thm Hilbert Schmidt}
Let $\Lambda$ be a quasi-geometric sequence. The following are equivalent:
\begin{enumerate}[(i)] 
\item $i_\mu^2$ is an Hibert-Schmidt operator ;
\item $\displaystyle\int_{[0,1)}\frac{1}{1-t}d\mu <+\infty$ ;
\end{enumerate}
In this case we have $\dis\|i_\mu^2\|_{\mathcal{S}^2}\approx \Big(\displaystyle\int_{[0,1)}\frac{1}{1-t}d\mu\Big)^{\frac{1}{2}}~~.$
\end{thm}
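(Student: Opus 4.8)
The plan is to read this off as the endpoint case $q=2$ of the results just established and then collapse everything to a one-line integral computation. First I would invoke Proposition~\ref{prop integrales} with $q=2$: the exponent $\frac{2}{q}+1$ becomes $2$ and the outer power $\frac{q}{2}$ becomes $1$, so the Schatten criterion there reads
\[
\|i_\mu^2\|_{\mathcal{S}^2}\approx\Big(\int_0^1\int_{[0,1)}\frac{d\mu(t)}{(1-st)^2}\,ds\Big)^{\frac12}.
\]
Since the integrand is nonnegative, Tonelli's theorem lets me exchange the two integrations, and the inner $s$-integral is elementary: for every $t\in[0,1)$,
\[
\int_0^1\frac{ds}{(1-st)^2}=\frac1t\Big[\frac{1}{1-st}\Big]_{s=0}^{s=1}=\frac1t\Big(\frac{1}{1-t}-1\Big)=\frac{1}{1-t}
\]
(with the removable value $1$ at $t=0$). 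Substituting back gives $\|i_\mu^2\|_{\mathcal{S}^2}^2\approx\int_{[0,1)}\frac{d\mu}{1-t}$, which simultaneously yields the norm equivalence and the equivalence $(i)\Leftrightarrow(ii)$, the latter being just ``finite if and only if finite''.

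A fully self-contained variant, which I would mention as an alternative, avoids Proposition~\ref{prop integrales} altogether. Because $\Lambda$ is lacunary, the Gurariy--Macaev theorem (Th.\ref{thm gurariy}) says that $(f_n)_n=(\lambda_n^{1/2}t^{\lambda_n})_n$ is a Riesz basis of $M_\Lambda^2$; hence the Hilbert--Schmidt norm is comparable to $\sum_n\|f_n\|_{L^2(\mu)}^2=\sum_n\lambda_n\int_{[0,1)}t^{2\lambda_n}\,d\mu$. Exchanging sum and integral by Tonelli and applying Lemma~\ref{lem 1 sur 1-t} with $\alpha=1$ to the variable $t^2$ gives $\sum_n\lambda_n (t^2)^{\lambda_n}\approx(1-t^2)^{-1}$, and the comparison $1-t\le 1-t^2\le 2(1-t)$ finishes it. Equivalently, one can quote Lemma~\ref{lem schatten}$(ii)$ with $q=2$ together with the preceding Schatten-norm theorem, which for a quasi-geometric sequence gives $\|i_\mu^2\|_{\mathcal{S}^2}\approx\|(D_n(2))_n\|_{\ell^2}\approx\big(\sum_n\lambda_n\int t^{\lambda_n}d\mu\big)^{1/2}$, after which Lemma~\ref{lem 1 sur 1-t} again converts the sum into $\int d\mu/(1-t)$.

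I do not expect a genuine obstacle here; the only points requiring a word of care are bookkeeping ones. First, to conclude the characterization rather than merely a norm identity under an a priori membership hypothesis, I would emphasize that each $\approx$ above is an equivalence of nonnegative quantities valid in $[0,+\infty]$ with constants depending only on $\Lambda$ (and $q=2$), so one side is finite exactly when the other is. Second, the direct route through Lemma~\ref{lem schatten} formally asks that $i_\mu^2$ be bounded; in the implication $(ii)\Rightarrow(i)$ this is automatic, since finiteness of $\int_{[0,1)}\frac{d\mu}{1-t}$ already forces $i_\mu^2$ to be order bounded, hence compact, by Proposition~\ref{prop orderbounded}. With these remarks the computation above gives the stated equivalence and the estimate $\|i_\mu^2\|_{\mathcal{S}^2}\approx\big(\int_{[0,1)}\frac{1}{1-t}\,d\mu\big)^{1/2}$.
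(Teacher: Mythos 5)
Your proposal is correct and its main route coincides with the paper's own Proof~1: apply Prop.~\ref{prop integrales} with $q=2$, exchange the integrals by Fubini--Tonelli, and evaluate $\int_0^1 (1-st)^{-2}\,ds=\frac{1}{1-t}$, with the finiteness equivalence read off because the comparison holds in $[0,+\infty]$. Your closing remark that $(ii)$ forces order boundedness via Prop.~\ref{prop orderbounded} even parallels the paper's alternative Proof~2 (order bounded $=$ Hilbert--Schmidt in the $L^2$ setting), so nothing further is needed.
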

\begin{proof}
Proof 1. We apply Prop.\ref{prop integrales} in the case $q=2$.
The Fubini theorem gives:
\begin{align*}
\|i_\mu^2\|_{\mathcal{S}^2}^2&\approx\int_0^1 \int_{t\in[0,1)}\frac{d\mu(t)}{(1-st)^{2}}ds= \int_{[0,1)}\frac{1}{1-t}d\mu ~~.
\end{align*}

Proof 2. It suffices to invoke the fact that order bounded and Hilbert-Schmidt operators are the same in an $L^2$-framework, and Prop.\ref{prop orderbounded} gives the result.
\end{proof}
\medskip

\subsection{Examples}~~\\

Now we give two examples, showing that in a strong manner, the boundedness and the compactness of Carleson embeddings on M\"untz spaces $M_\Lambda^p$ depend  in general on $p$ and not only on $\Lambda$.

\begin{exa}\label{ex non borne compact}
Let $p\in[1,+\infty).$
We are going to construct a lacunary sequence $\Lambda$ and a measure $\mu$ on $[0,1)$ such that 
\begin{enumerate}[(A)] 
\item $i_\mu^q$ is not bounded when $q\in[1,p]$ ;
\item $i_\mu^q$ is compact when $q\in(p,+\infty)$.
\end{enumerate}
\end{exa}

\begin{proof}
Note that $\Lambda$ cannot be a quasi-geometric sequence.
We shall take a measure $\mu$ with the form $\mu=\sum\limits_{k\geq 2}c_k\delta_{x_k}$ where $x_k\in(0,1)$ and $c_k>0$. 

We define $\lambda_2=1$, $(\lambda_n)_{n\geq 2}$ such that for any $n\geq 3$, we have $\lambda_{n}\geq n^{p+1}\lambda_{n-1}$.
For $n\geq 2$ let $c_n=\displaystyle\frac{n^p\log(n)}{\lambda_n}$ and $\displaystyle x_n=1-\frac{\log(n)}{\lambda_n}\cdot$ We have $\displaystyle x_n^{\lambda_n}\sim \frac{1}{n}$ when $n\rightarrow+\infty$, and in for $n,k$ such that $n\geq k$ we have $x_k^{\lambda_n}\lesssim \Big(\displaystyle\frac{1}{k}\Big)^{\frac{\lambda_n}{\lambda_k}}~~.$
We check that $\mu$ does not satisfy $(B_p)$:
\begin{align*}
\lambda_n\int_{[0,1)}t^{p\lambda_n}d\mu &=\sum\limits_k\lambda_nc_kx_k^{p\lambda_n}
\geq \lambda_nc_nx_n^{p\lambda_n}\sim \lambda_n\frac{n^p\log(n)}{\lambda_n}\frac{1}{n^p}=\log(n)\rightarrow +\infty.
\end{align*}
Hence $i_\mu^p$ is not bounded.

On the other hand, for $q>p$, we have 
$$\lambda_n\int_{[0,1)}t^{q\lambda_n}d\mu=\sum\limits_{k<n}\lambda_n c_k x_k^{q\lambda_n}+\lambda_nc_nx_n^{q\lambda_n}+\sum\limits_{k>n}\lambda_n c_k x_k^{q\lambda_n}.$$
We control these three terms. For the first:
$$\sum\limits_{k<n}\lambda_n c_k x_k^{q\lambda_n}\lesssim \sum\limits_{k<n}\log(k)k^p\frac{\lambda_n}{\lambda_k}\Big(\frac{1}{k^q}\Big)^{\frac{\lambda_n}{\lambda_k}}\lesssim \sum\limits_{k<n}\frac{\lambda_n}{\lambda_k}\Big(\frac{1}{k^q}\Big)^{\frac{\lambda_n}{\lambda_k}-1}\cdot$$
Since $k\geq 2$ and $\displaystyle\frac{\lambda_n}{\lambda_{n-1}}\rightarrow+\infty$, this term tends to 0 when $n\rightarrow+\infty.$
For the term $n=k$ we have : $\displaystyle\lambda_nc_nx_n^{q\lambda_n}\sim\lambda_n\frac{n^p\log(n)}{\lambda_n}\frac{1}{n^q}=\frac{\log(n)}{n^{q-p}}\rightarrow 0.$
For the last sum, $x_k\leq 1$ gives:
\begin{align*}
\sum\limits_{k>n}\lambda_n x_k^{q\lambda_n}c_k &\leq \sum\limits_{k=n+1}^{+\infty}\lambda_{n}\frac{k^p\log(k)}{\lambda_{k}}
\leq \sum\limits_{k=n+1}^{+\infty} \frac{\log(k)}{k}\times\frac{\lambda_n}{\lambda_{k-1}}\\
&\lesssim \frac{\log(n)}{n}\sum\limits_{k=n}^{+\infty}\frac{\lambda_n}{\lambda_k}\rightarrow 0.
\end{align*}
Thus, $\mu$ satisfies $(b_q)$, and using Th.\ref{thm bp i mu q compact} $i_\mu^{r}$ is compact for any $r>q$. We obtain that for any $r>p$, $i_\mu^r$ is compact.
\end{proof}

\begin{exa}\label{ex Pascal}
Let $p\in(1,+\infty).$
We shall construct a lacunary sequence $\Lambda$ and a measure $\mu$ on $[0,1)$ such that 
\begin{enumerate}[(A)]
\item $i_\mu^q$ is not bounded when $q\in[1,p)$ ;
\item $i_\mu^q$ is compact when $q\in[p,+\infty)$.
\end{enumerate}
\end{exa}

\begin{proof}
We take again a measure $\mu$ with the form $\mu=\sum\limits_{k\geq 2}c_k\delta_{x_k}$. Let  $\Lambda=(\lambda_n)_{n\geq 2}$ with $\lambda_2=1$, 
and for all $n\geq 3$, $\lambda_n\geq n^{p\max\{p,p'\}}\lambda_{n-1}.$

Let $c_n=\displaystyle\frac{n^p}{\lambda_n\log(n)}$ and $\displaystyle x_n=1-\frac{\log(n)}{\lambda_n}\cdot$ We have $\displaystyle x_n^{\lambda_n}\sim \frac{1}{n}$ when $n\rightarrow+\infty$, and in for $n,k$ such that $n\geq k$ we have $x_k^{\lambda_n}\lesssim \Big(\displaystyle\frac{1}{k}\Big)^{\frac{\lambda_n}{\lambda_k}}~~.$

Let $q\in[1,p)$. 
We check that $\mu$ does not satisfy $(B_q)$:
\begin{align*}
\lambda_n\int_{[0,1)}t^{q\lambda_n}d\mu &
\geq \lambda_nc_nx_n^{q\lambda_n}\sim \lambda_n\frac{n^p}{\lambda_n\log(n)}\frac{1}{n^q}=\frac{n^{p-q}}{\log(n)}\rightarrow +\infty.
\end{align*}
Hence $i_\mu^q$ is not bounded.
On the other hand, we show that the sequence $D_n(p)$ tends to 0 when $n\rightarrow+\infty$:
\begin{align*}
D_n(p)^p&=\sum\limits_{j\in\mathbb{N}}\lambda_n^{\frac{1}{p}}c_jx_j^{\lambda_n}\Big(\sum\limits_k \lambda_k^{\frac{1}{p}} x_j^{\lambda_k}\Big)^{p-1}\\
&\lesssim \lambda_n^{\frac{1}{p}}c_nx_n^{\lambda_n}\Big(\sum\limits_k\lambda_k^{\frac{1}{p}}x_n^{\lambda_k}\Big)^{p-1} +\sum\limits_{j\not=n}\lambda_n^{\frac{1}{p}}c_jx_j^{\lambda_n}\Big(\frac{1}{1-x_j}\Big)^{\frac{1}{p'}} 
\end{align*}
using Lemma~\ref{lem 1 sur 1-t} and Remark~\ref{rem 1 sur 1-t} for the second term. We first control the second term. If $j>n$, $x_j^{\lambda_n}\leq 1$ gives: 
$$\sum\limits_{j>n}\lambda_n^{\frac{1}{p}}c_jx_j^{\lambda_n}\Big(\frac{1}{1-x_j}\Big)^{\frac{1}{p'}} \leq \sum\limits_{j>n}\lambda_n^{\frac{1}{p}}\frac{j^p}{\lambda_j}\frac{\lambda_j^{\frac{1}{p'}}}{\log(j)^{1+\frac{1}{p'}}}\leq \sum\limits_{j>n}j^p\Big(\frac{\lambda_n}{\lambda_j}\Big)^{\frac{1}{p}}\leq \sum\limits_{j>n}\frac{1}{j^p}$$
since $\lambda_j\geq j^{p^2}\lambda_{j-1}.$ Hence this term tends to 0.

For $j<n$ we have $\displaystyle x_j^{\lambda_n}\lesssim \Big(\frac{1}{j}\Big)^{\frac{\lambda_n}{\lambda_j}}$ and we obtain:
$$\sum\limits_{j<n}\lambda_n^{\frac{1}{p}}c_jx_j^{\lambda_n}\Big(\frac{1}{1-x_j}\Big)^{\frac{1}{p'}} \lesssim \sum\limits_{j<n}\lambda_n^{\frac{1}{p}}\frac{j^p}{\lambda_j}\Big(\frac{1}{j}\Big)^{\frac{\lambda_n}{\lambda_j}}\frac{\lambda_j^{\frac{1}{p'}}}{\log(j)^{1+\frac{1}{p'}}}\leq \sum\limits_{j<n}\Big(\frac{\lambda_n}{\lambda_j}\Big)^{\frac{1}{p}}\Big(\frac{1}{j}\Big)^{\frac{\lambda_n}{\lambda_j}-p}$$
and since $j\geq 2$ and $\displaystyle\frac{\lambda_n}{\lambda_{n-1}}\rightarrow +\infty$, this term tends to 0 when $n\rightarrow+\infty.$

To majorize the part "$j=n$" we split the sum in three terms:
\begin{align*}
\lambda_n^{\frac{1}{p}}c_nx_n^{\lambda_n}\Big(\sum\limits_k\lambda_k^{\frac{1}{p}}x_n^{\lambda_k}\Big)^{p-1} &\lesssim \lambda_n^{\frac{1}{p}}c_nx_n^{\lambda_n}\Big(\sum\limits_{k<n}\lambda_k^{\frac{1}{p}}x_n^{\lambda_k}\Big)^{p-1} + \lambda_nx_n^{p\lambda_n}c_n\\
&+\lambda_n^{\frac{1}{p}}c_nx_n^{\lambda_n}\Big(\sum\limits_{k>n}\lambda_k^{\frac{1}{p}}x_n^{\lambda_k}\Big)^{p-1}
\end{align*}

For $k<n$, we have $x_n\leq 1$ and it gives:
\begin{align*}
\lambda_n^{\frac{1}{p}}c_nx_n^{\lambda_n}\Big(\sum\limits_{k<n}\lambda_k^{\frac{1}{p}}x_n^{\lambda_k}\Big)^{p-1}\lesssim \frac{\lambda_n^{\frac{1}{p}}n^p}{\log(n)\lambda_n} \frac{1}{n} \Big(\sum\limits_{k\leq n-1}\lambda_k^{\frac{1}{p}}\Big)^{p-1}\lesssim n^{p-1} \Big(\frac{\lambda_{n-1}}{\lambda_n}\Big)^{\frac{1}{p'}}\leq \frac{1}{n}
\end{align*}
since $\lambda_n\geq \lambda_{n-1}n^{pp'}.$

For the term $n=k$, we have $\lambda_nx_n^{p\lambda_n}c_n\sim \displaystyle\frac{\lambda_nn^p}{n^p\lambda_n\log(n)}=\frac{1}{\log(n)}\rightarrow 0.$ For $k> n$, we have $x_n^{\lambda_k}\lesssim \Big(\displaystyle\frac{1}{n}\Big)^{\frac{\lambda_k}{\lambda_n}}$ and we obtain:

\begin{align*}
\lambda_n^{\frac{1}{p}}c_nx_n^{\lambda_n}\Big(\sum\limits_{k>n}\lambda_k^{\frac{1}{p}}x_n^{\lambda_k}\Big)^{p-1} &\lesssim \frac{n^{p-1}}{\log(n)}\lambda_n^{-\frac{1}{p'}}\Big(\sum\limits_{k>n}\lambda_k^{\frac{1}{p}}\Big(\frac{1}{n}\Big)^{\frac{\lambda_k}{\lambda_n}}\Big)^{p-1}\\
&\leq \Big(\sum\limits_{k>n}\Big(\frac{\lambda_k}{\lambda_n}\Big)^{\frac{1}{p}}\Big(\frac{1}{n}\Big)^{\frac{\lambda_k}{\lambda_n}-1}\Big)^{p-1}
\end{align*}
and this term tends to 0 since $\displaystyle\frac{\lambda_{n+1}}{\lambda_{n}}\rightarrow+\infty.$

Thus, $D_n(p)\rightarrow 0$ when $n\rightarrow+\infty$. Since $\Lambda$ is lacunary we can factorize $i_\mu^p$ as in Remark \ref{rem factorization}. We have $i_\mu^p=T_\mu\circ J_\Lambda^{-1}$ (recall that $J_\Lambda$ is an isomorphism) and $T_\mu$ is compact thanks to Prop.\ref{prop an(T) Dn}. Hence Cor.\ref{cor bp i mu q compact} implies that $i_\mu^{q}$ is compact for any $q\geq p$.
\end{proof}
\bibliographystyle{plain}

\end{document}